\newtheorem{thm}{Theorem}[section]
\newtheorem{prop}[thm]{Proposition}
\newtheorem{lem}[thm]{Lemma}
\newtheorem{cor}[thm]{Corollary}
\newtheorem{conj}[thm]{Conjecture}
\theoremstyle{definition}
\newtheorem{defn}[thm]{Definition}
\begin{document}




























\title{Isometries of the Hilbert Metric}
\author{Timothy Speer}
\date{}
\maketitle
\begin{abstract}
On any convex domain in $\mathbb{R}^n$ we can define the Hilbert metric. A projective transformation is an example of an isometry of the Hilbert metric. In this thesis we will prove that the group of projective transformations on a convex domain has at most index 2 in the group of isometries of the convex domain with its Hilbert metric. Furthermore we will give criteria for which the set of projective transformations between two convex domains is equal to the set of isometries of the Hilbert metric of these convex domains. Lastly we will show that $2$-dimensional convex domains with their corresponding Hilbert metrics are isometric if and only if they are projectively equivalent.

\end{abstract}

\tableofcontents


\section{Introduction}

On any open bounded convex set $\Omega$ in $\mathbb{R}^n$ we can define a metric $d_\Omega$ called the Hilbert metric. Any such convex set $\Omega$ will be called a convex domain and $(\Omega,d_\Omega)$ will be called a Hilbert geometry. The isometries of this metric have been studied in ~\cite{Cones}, ~\cite{Harpe}, and ~\cite{Polyhedral} when the convex domain is either a symmetric cone, strictly convex, or a polyhedral domain. In the remainder of this thesis we will investigate the nature of the isometries of the Hilbert metric for general convex domains and extend results known for the special types of domains listed above to general convex domains. After this work was complete the author found the work of Walsh in ~\cite{Walsh2} which proves one of the main results of this thesis using different techniques.

Suppose $\Omega$ is an open bounded convex set in $\mathbb{R}^n$. Given two distinct points $x$ and $y$ in $\Omega$ there exists a unique straight line containing both $x$ and $y$ and this line intersects $\partial\Omega$ in two points $\alpha, \beta$ satisfying $|\alpha-x|<|\alpha-y|$ and $|\beta-y|<|\beta-x|$.

\begin{figure}[h]
    \centering
    \def\svgwidth{\columnwidth}
    \begin{overpic}[]{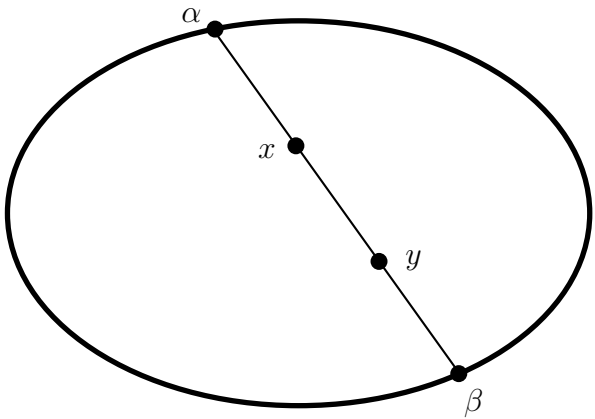}
        \put (30,66) {$\alpha$}
        \put (43,43) {$x$}
        \put (68,25) {$y$}
        \put (78,0) {$\beta$}
    \end{overpic}
    \vspace{3mm}
    \caption{The Hilbert metric on a convex domain $\Omega$.}
\end{figure}

\noindent The cross ratio of the fours points $\alpha,x,y,\beta$ is defined to be:
\[ CR(\alpha,x,y,\beta)=\frac{|\alpha-y|}{|\alpha-x|}\frac{|\beta-x|}{|\beta-y|}\]
\noindent which is always a number greater than one. The Hilbert metric on $\Omega$ is defined by
\begin{displaymath}
   d_\Omega(x,y) = \left\{
     \begin{array}{llr}
       \ln(CR(\alpha,x,y,\beta)) & if & x\neq y\\
       0 & if & x = y
     \end{array}
   \right.
\end{displaymath}

\noindent which makes $(\Omega, d_\Omega)$ into a complete metric space. The fact that $d_\Omega$ is a metric on $\Omega$ was shown by Hilbert in ~\cite{Hilbert} and can also be found in ~\cite{Harpe}. Hilbert introduced this metric as a generalization of the Cayley-Klein metric on hyperbolic space. If $\Omega$ is the interior of the unit ball in $\mathbb{R}^n$ then $(\Omega, d_\Omega)$ is isometric to the projective model of n-dimensional hyperbolic space. The Hilbert metric was used by Birkhoff in ~\cite{Birkhoff} to give a proof of the Perron-Frobenius theorem.

Recall that $\mathbb{RP}^n$ is the set of all lines through the origin in $\mathbb{R}^{n+1}$. We can think of $\Omega$ as lying in an affine subspace $\mathbb{R}^n$ which is contained in $\mathbb{RP}^n$. Denote the group of projective transformations of $\mathbb{RP}^n$ by $PGL_{n+1}(\mathbb{R})$. If $\Omega$ and $\Omega'$ are open bounded convex sets in $\mathbb{R}^n$ with the Hilbert metric then $Isom(\Omega,\Omega')$ is the set of isometries from $\Omega$ to $\Omega'$ and $PGL(\Omega,\Omega')=\{p\in PGL_{n+1}(\mathbb{R}): p(\Omega)=\Omega'\}$. In the case that $\Omega=\Omega'$ then these sets are groups and we will use the notation $Isom(\Omega)$ and $PGL(\Omega)$. It is well known that projective transformations preserve cross ratios and therefore give examples of isometries of the Hilbert metric. But in ~\cite{Harpe} de la Harpe gave an example of an isometry of the triangle which is a quadratic transformation and is not a projective transformation. He then asked for which convex domains $\Omega$ do the groups $PGL(\Omega)$ and $Isom(\Omega)$ coincide. In ~\cite{Walsh2} Walsh has shown these groups to be equal if all cones over $\Omega$ are not non Lorentzian symmetric cones.

Previous work has mainly be concerned with studying $PGL(\Omega)$ and its relationship to $Isom(\Omega)$. In this thesis we will consider these two groups as well as study isometries with possibly different domain and codomain. Two Hilbert geometries are \textit{projectively equivalent} if there is a projective transformation mapping one to the other. For all known cases, if two Hilbert geometries are isometric then they are projectively equivalent. We will show that this is always true in dimension 2.

A \textit{Lie group} is a manifold which is also a group such that the multiplication and inversion operations are smooth functions. $PGL(\Omega)$ is a Lie group because it is a closed subgroup of $PGL_{n+1}(\mathbb{R})$. Thus if $Isom(\Omega)=PGL(\Omega)$ then $Isom(\Omega)$ is a Lie group. In ~\cite{Harpe} de la Harpe conjectured that $Isom(\Omega)$ is always a Lie group. Corollary 1.4 in ~\cite{Walsh2} shows this to be true.

We will now state the main results obtained in this thesis describing the nature of $Isom(\Omega, \Omega')$ and its relation to $PGL(\Omega, \Omega')$. Theorem 1.1 tells us that $PGL(\Omega)$ is always a normal subgroup of $Isom(\Omega)$ and the quotient group is either the identity group or $\mathbb{Z}_2$. Corollary 1.4 in ~\cite{Walsh2} establishes the same result.

\begin{thm}
If $\Omega$ is an open bounded convex set in $\mathbb{R}^n$ then $PGL(\Omega)$ is a normal subgroup of $Isom(\Omega)$ and has index at most 2, that is $PGL(\Omega)=Isom(\Omega)$ or
\begin{displaymath}Isom(\Omega)/PGL(\Omega)\cong\mathbb{Z}_2.\end{displaymath}
\end{thm}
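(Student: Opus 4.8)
The plan is to prove the index bound directly and let normality come for free: any subgroup of index at most $2$ is automatically normal, so once we know $[Isom(\Omega):PGL(\Omega)]\le 2$ the relation $PGL(\Omega)\normal Isom(\Omega)$ is immediate and the quotient, being a group of order at most $2$, is either trivial or $\mathbb{Z}_2$. Since projective transformations preserve cross ratios we already have the inclusion $PGL(\Omega)\subseteq Isom(\Omega)$, so the whole theorem reduces to a single assertion: the non-projective isometries, if any exist, all lie in one coset of $PGL(\Omega)$; equivalently, whenever $f,g\in Isom(\Omega)$ are both non-projective, the composite $g\circ f^{-1}$ is projective.

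To attack this I would first isolate the rigidity input that converts metric information into projective information: a bijection of $\Omega$ carrying open straight segments to open straight segments is the restriction of an element of $PGL_{n+1}(\mathbb{R})$, by the fundamental theorem of projective geometry applied after extension to the closure. Consequently $PGL(\Omega)$ is \emph{exactly} the subgroup of isometries that preserve collinearity: a projective map preserves collinearity and cross ratios, and conversely a collinearity-preserving isometry extends to a projective transformation of $\mathbb{RP}^n$ taking $\Omega$ to $\Omega$. The task then becomes to measure, for an arbitrary isometry $f$, its failure to preserve collinearity and to show that this failure is governed by a single $\mathbb{Z}_2$-valued invariant.

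The mechanism I would use is as follows. Straight segments are always geodesics for $d_\Omega$, and an isometry sends geodesics to geodesics; what it need not do is send \emph{straight} geodesics to straight geodesics. I would therefore study, for two points $x,y$ and their images, the entire family of geodesics joining them, extract from the metric an intrinsic way to single out the straight geodesic together with an orientation of its affine parameter coming from the distinguished boundary points $\alpha,\beta$, and track how $f$ permutes these data, using that $f$ extends to a face-preserving homeomorphism of $\ov{\Omega}$. The goal is to show that the only freedom an isometry retains, after correcting by a projective transformation, is a global inversion exchanging the roles of a convex body and its polar dual; this inversion is an involution and so contributes at most a factor of $\mathbb{Z}_2$. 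Concretely, fixing any one non-projective isometry $\sigma_0$, I would prove that for every isometry $f$ exactly one of $f$ and $\sigma_0\circ f$ preserves collinearity, which is precisely the statement that the non-projective isometries form one coset of $PGL(\Omega)$.

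The hard part is this structural dichotomy. A priori an isometry might bend straight lines in one region while preserving them in another, which would wreck the coset structure; the whole argument stands or falls on ruling that out and showing the deviation from collinearity is forced to be the single, order-two, primal-versus-dual alternative rather than something that varies from geodesic to geodesic. This is where the convex geometry of the boundary faces has to be combined with careful cross-ratio bookkeeping, and I expect it to be the genuinely delicate step; the reduction to a single coset and the invocation of the fundamental theorem of projective geometry surrounding it are comparatively routine.
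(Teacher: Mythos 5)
Your reduction is sound as far as it goes: index at most $2$ does imply normality, and the theorem does come down to showing that any two non-projective isometries $\tau_1,\tau_2$ of $\Omega$ satisfy $\tau_1^{-1}\tau_2\in PGL(\Omega)$. This is exactly the reduction the paper makes. But the proposal stops precisely where the proof has to begin. You write that the ``hard part'' is to show that the failure of collinearity is governed by a single global $\mathbb{Z}_2$-valued invariant rather than something that varies from geodesic to geodesic, and that ``the whole argument stands or falls on ruling that out'' --- and then you do not rule it out. No candidate invariant is constructed, no argument is given for why a non-projective isometry cannot bend lines in one region while preserving them in another, and the suggestion that the only possible deviation is a primal-versus-polar-dual inversion is (a) unproven in your sketch and (b) essentially the full strength of Walsh's classification theorem, which is far more than is needed and certainly not ``comparatively routine'' to extract from the fundamental theorem of projective geometry.

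The paper's proof supplies exactly the missing invariant, and it is worth seeing how concrete it is. Every convex domain contains a minimal cone $C_e(I)$ (Lemma 12.2), an isometry carries minimal cones to minimal cones together with a well-defined orientation on the family of lines from vertex to base (Corollary 14.2), and an isometry that preserves that orientation has a focusing point and is therefore projective (Lemma 10.2 and Theorem 13.2, the latter being the real workhorse of the whole thesis). Hence a non-projective isometry must \emph{reverse} the vertex-to-base orientation on a minimal cone; composing two such reversals preserves it, so $\tau_1^{-1}\tau_2$ has a focusing point and is projective. The $\mathbb{Z}_2$ you are looking for is this orientation bit on a single minimal cone, not a global primal/dual dichotomy, and establishing that it controls projectivity requires the entire focusing-point apparatus of Sections 8--14. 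Your proposal correctly predicts the shape of the answer but contains no proof of the one statement that carries all the content.
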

\noindent \textit{Sketch of proof:} The main steps in proving theorem 1.1 are the following:
\begin{enumerate}
\item Any isometry with a focusing n-simplex is projective
\item Any isometry between domains that contain extreme lines is projective
\item Any isometry with a focusing point is projective
\item Cones are only isometric to cones
\item An isometry maps a minimal cone to a minimal cone
\end{enumerate}

The most difficult steps to prove are 3 and 4. Once these 5 steps are established the next step is to show that if $\tau_1, \tau_2$ are isometries of $\Omega$ which are not projective transformations then $\tau_1 p=\tau_2 $ for some projective transformation $p$. Because $\tau_1$ and $\tau_2$ are not projective transformations they do not have focusing points. This means that $\tau_1^{-1}\tau_2$ has a focusing point and is a projective transformations.

Theorem 1.2 gives a useful criterion for proving that an isometry of the Hilbert metric is a projective transformation. For the definition of focusing point the reader should see the beginning of section 10. Note that we don't need to assume that the domain and codomain of the isometry are the same.

\begin{thm}
If $\tau:\Omega\rightarrow\Omega'$ is an isometry then $\tau$ is a projective transformation if and only if $\tau$ has a focusing point.
\end{thm}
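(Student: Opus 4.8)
The plan is to prove the two implications separately, noting that the substantive content lies entirely in the direction ``focusing point $\Rightarrow$ projective,'' which is precisely step~3 in the outline of Theorem~1.1, while the reverse implication is comparatively routine. I would therefore present the proof as: the forward direction is a direct consequence of the rigidity of projective maps, and the backward direction is the hard geometric theorem that an isometry carrying enough focusing data must in fact be the restriction of an element of $PGL_{n+1}(\R)$.

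For the easy direction, suppose $\tau$ is the restriction to $\Omega$ of a projective transformation $g \in PGL_{n+1}(\R)$ with $g(\Omega) = \Omega'$. Since $g$ carries straight lines to straight lines and preserves cross ratios, it sends every geodesic of $(\Omega, d_\Omega)$ to a geodesic of $(\Omega', d_{\Omega'})$ and respects the full projective structure of $\comp{\Omega}$ and its boundary. Because the focusing condition defined in section~10 is phrased entirely in terms of this geodesic and cross-ratio data, I would verify directly from that definition that $g$ preserves the family of geodesics used to test focusing; consequently $g$ possesses a focusing point. No analytic input is needed here, and this is the step that upgrades step~3 of Theorem~1.1 into the stated equivalence.

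For the hard direction, assume $\tau$ has a focusing point $p$. First I would invoke the standard fact that an isometry of a proper geodesic space extends to the horofunction compactification, so that for the Hilbert metric the focusing data at $p$ can be transported by $\tau$ to $\Omega'$. The focusing point supplies a rich family of geodesic rays converging to $p$, and because $\tau$ preserves $d_\Omega$ it preserves the cross ratios along each such ray. I would use this to show that, after normalizing by an auxiliary projective transformation, $\tau$ agrees with the identity along each geodesic of the focusing family, and hence on a projective frame of $n+2$ points in general position. Since a projective transformation is uniquely determined by its action on such a frame, this produces a single $g \in PGL_{n+1}(\R)$ with $\tau = g$ on the frame.

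The main obstacle is then to upgrade ``$\tau = g$ on a frame and on the focusing family'' to ``$\tau = g$ on all of $\Omega$.'' The genuine difficulty is that $\Omega$ need not be strictly convex: geodesics need not be unique, the boundary may contain segments and higher-dimensional faces, and a single focusing point need not directly see the whole domain. I would propagate the equality by a connectedness argument, showing that the locus on which the isometry $\tau^{-1} g$ acts trivially on geodesics is open, closed, and nonempty, while using cross-ratio rigidity to exclude degenerate behavior along the flat portions of $\partial\Omega$. Controlling these convexity degeneracies, so that the local projective behavior forced at $p$ really does determine $\tau$ globally, is the crux of the argument and the reason step~3 is among the most difficult steps underlying Theorem~1.1.
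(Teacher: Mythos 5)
Your easy direction is fine and matches the paper's remark after Definition 10.1 that every extreme point is a focusing point of a projective transformation. The hard direction, however, has a genuine gap at its central step. From a single focusing point $e$ you only know that rays converging to $e$ are sent to rays converging to a single extreme point $f$ of $\Omega'$; this controls $\tau$ along the family of rigid lines into one boundary point and says nothing a priori about the other ends of those lines, which may terminate in higher-dimensional faces where geodesics are non-unique. Your proposal to ``show that $\tau$ agrees with the identity along each geodesic of the focusing family, and hence on a projective frame of $n+2$ points in general position'' is precisely the statement that needs proving, and no mechanism is offered for it: agreement with a projective $g$ on $n+2$ points does not yield agreement on an open set while $\tau$ is not yet known to be projective, so the subsequent open--closed--nonempty propagation has nothing to propagate (the paper's Lemma 10.6 does run such an argument, but only once $\tau$ is known to coincide with a projective map on an open set). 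Likewise, invoking the horofunction compactification does not by itself transport the focusing data; making that work is Walsh's separate and substantial argument, which this paper deliberately avoids.

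The paper closes the gap by bootstrapping: Theorem 13.2 shows that one focusing point forces \emph{every} extreme point of $\Omega$ to be a focusing point, via an induction on dimension using minimal cones (Section 12), rigid cross sections (Section 9), the classification of $3$-dimensional domains without extreme lines (Lemma 13.1), and an analysis of the induced isometries of triangles and $3$-simplices sitting inside $\Omega'$. This produces a focusing $n$-simplex; Lemma 10.7 then shows $\tau$ is projective on that simplex by transporting the problem to de la Harpe's normed-space model $\Lambda_n:\Delta_n\rightarrow W_n$ and applying the linear-algebra Lemma 10.4, and Lemma 10.6 extends projectivity from the simplex to all of $\Omega$. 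That bootstrap from a single focusing point to a focusing $n$-simplex is the real content of the theorem and is exactly what your outline leaves unaddressed.
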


The next theorem gives a characterization of a large class of convex domains for which all isometries are projective transformations. This class has a certain restriction placed on the structure of its boundary. As far as we know there is no name in the literature for this type of convex set. A point $e$ in $\partial\Omega$ is an {\em extreme point} of $\Omega$ if it is not contained in the interior of any line segment in the boundary of $\Omega$. An {\em extreme line} in $\Omega$ is a straight line in $\Omega$ whose closure intersects $\partial\Omega$ in extreme points of $\Omega$.

\begin{thm}
If $\tau:\Omega\rightarrow\Omega'$ is an isometry and $\Omega$ contains an extreme line then $\tau$ is a projective transformation.
\end{thm}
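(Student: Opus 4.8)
The plan is to reduce to Theorem~1.2: it suffices to show that an isometry whose source contains an extreme line must possess a focusing point, for then projectivity is immediate. So fix an extreme line $L\subset\Omega$ with extreme endpoints $\alpha,\beta\in\partial\Omega$, together with a basepoint $o\in L$; the goal becomes the production of a focusing point for $\tau$, which I will look for along $L$.

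The first and most essential input is the rigidity forced by having \emph{both} endpoints extreme. The straight chord along $L$ is always a $d_\Omega$-geodesic, so the real content is uniqueness: I would prove as a lemma that if $x,y$ lie on a chord whose two boundary endpoints are both extreme, then the metric interval $\{z\in\Omega : d_\Omega(x,z)+d_\Omega(z,y)=d_\Omega(x,y)\}$ is exactly the straight segment from $x$ to $y$. Any competing geodesic would have to leave this segment and return, and the standard analysis of geodesics in a Hilbert geometry shows that such bending is possible only by running along a line segment contained in $\partial\Omega$ abutting $\alpha$ or $\beta$; an extreme endpoint lies in no such segment, so no bending is available at either end and the geodesic is unique. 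It is crucial that \emph{both} ends be extreme: a single extreme endpoint does not suffice, as the triangle already shows, since its three vertices are extreme yet every chord joining two of them lies in $\partial\Omega$, so the triangle contains no extreme line at all and indeed admits a non-projective isometry.

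Next I would transport this rigidity across $\tau$. An isometry carries metric intervals to metric intervals, hence takes a pair of points joined by a unique geodesic to a pair again joined by a unique geodesic; and since in $\Omega'$ the straight chord is always a geodesic, that unique geodesic must be the straight segment. Thus $\tau$ maps $L$ onto a straight chord $L'\subset\Omega'$, carrying $o$ to a point $o'\in L'$, and, passing to the boundary along these rigid rays, sends $\alpha,\beta$ to boundary points $\tau(\alpha),\tau(\beta)$ which are themselves extreme, since otherwise the uniqueness just established would fail for chords near $L'$. In short, $\tau$ takes the extreme line $L$ to an extreme line $L'$ and does so along a genuine straight line.

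It remains to upgrade this single rigid straight geodesic into a focusing point at $o$, and this is where the main difficulty lies. The rigid bi-infinite geodesic $L$ anchors two extreme boundary points $\alpha,\beta$, and I would use the transverse metric structure along $L$ — the Busemann functions of the two ends and the horospheres they cut out, all of which $\tau$ must respect — to pin down, near $o$, a canonical projective parametrization that $\tau$ is forced to preserve. Reconstructing from one rigid line the full local projective data that constitutes a focusing point is the crux: a single line is a priori too thin, so the argument must exploit both extreme anchors together with the metric relations transverse to $L$ in order to recover the remaining directions. Once a focusing point at $o$ is in hand, Theorem~1.2 finishes the proof and $\tau$ is a projective transformation.
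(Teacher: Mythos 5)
Your proposal gets the easy half of the argument right and leaves the hard half as an acknowledged ``crux,'' so it has a genuine gap. Two structural problems first. (1) The reduction to Theorem~1.2 is circular relative to the paper's development: Theorem~1.2 is Theorem~13.2, whose proof \emph{uses} the present theorem (Theorem~10.9) together with Lemma~13.1 to dispose of the non-cone cases. The paper instead reduces to the much weaker Lemma~10.7: an isometry with a \emph{focusing $n$-simplex} --- that is, $n+1$ focusing points in general position whose convex hull is an extreme $n$-simplex --- is projective. (2) You have misread the definition of focusing point: it is an \emph{extreme point of $\partial\Omega$} with the property that any two rays converging to it are sent to rays converging to a common boundary point; it is not ``local projective data near the interior basepoint $o$.'' With the correct definition, producing a single focusing point from your setup is immediate and requires none of the Busemann-function machinery you gesture at: since $\tau(L)$ is again an extreme line (your second paragraph; the paper's Lemma~10.8), the ray of $L$ converging to $\alpha$ has image converging to an extreme point of $\Omega'$, and Lemma~10.2 then says $\alpha$ is a focusing point. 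The same holds for $\beta$.

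The real content of the theorem, which your proposal does not supply, is the promotion of these two focusing points to \emph{every} extreme point of $\Omega$, so that the Krein--Milman theorem yields a focusing $n$-simplex and Lemma~10.7 applies. The paper does this by taking an arbitrary extreme point $e_3$: if one of $(e_1,e_3)$, $(e_2,e_3)$ lies in $\Omega$ one is back in the extreme-line situation; otherwise both segments lie in $\partial\Omega$ and one argues by contradiction, using the rigid $2$-dimensional cross section through $e_1,e_2,e_3$ (Corollary~9.5), the $2$-dimensional case, and a carefully chosen auxiliary ray $r'$ to a point $\alpha\neq a(\tau(r))$ on a putative boundary segment $S$, concluding via Corollary~8.10 that $r=\tau^{-1}(r')$ --- a contradiction. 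None of this is visible in your sketch. Finally, a small factual slip: a chord with a \emph{single} extreme endpoint is already rigid, since Proposition~3.3's condition (ii) requires an open boundary segment through each endpoint and no such segment contains an extreme point; both endpoints being extreme is needed for the focusing argument, not for rigidity, and your triangle example concerns the nonexistence of extreme lines rather than the failure of rigidity.
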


If $\Omega$ is an open bounded convex set in $\mathbb{R}^2$ that does not contain an extreme line then $\Omega$ is the interior of a triangle. The isometries of triangles were studied by de la Harpe in ~\cite{Harpe}. Thus Theorem 1.3 completely determines isometries in dimension 2 when $\Omega$ is not a triangle. We get the following characterization of isometries in dimension 2.

\begin{thm}
Suppose $\Omega$ and $\Omega'$ are the interiors of compact convex sets in $\mathbb{R}^2\subset\mathbb{R}P^2$ which are isometric. Then the following statements hold:

\begin{enumerate}

\item If $\Omega$ is not the interior of a triangle then $\Omega'$ is not the interior of a triangle and $Isom(\Omega, \Omega')=PGL(\Omega,\Omega')$.

\item (de la Harpe) If $\Omega$ is the interior of a triangle then $\Omega'$ is the interior of a triangle. Furthermore if $\Omega=\Omega'$ then $Isom(\Omega)\cong\mathbb{R}^2\rtimes D_6$ and $PGL(\Omega)\cong\mathbb{R}^2\rtimes D_3$ where $D_n$ is the dihedral group of order $2n$.

\item $\Omega$ and $\Omega'$ are isometric if and only if they are projectively equivalent.

\end{enumerate}

\end{thm}

In dimension $3$ the $3$-simplex and a cone on a disk have isometries which are not projective transformations. The next theorem characterizes $Isom(\Omega,\Omega')$ for 3-dimensional convex domains which are not cones.

\begin{thm}
Suppose $\Omega$ and $\Omega'$ are the interiors of compact convex sets in $\mathbb{R}^3\subset\mathbb{R}P^3$ which are isometric. If $\Omega$ is not a cone then $\Omega'$ is not a cone and $Isom(\Omega, \Omega')=PGL(\Omega,\Omega')$.
\end{thm}

It follows directly from theorem 1.5 that $3$-dimensional convex domains which are not cones are isometric if and only if they are projectively equivalent. The author conjectures that a statement similar to theorem 1.5 holds in all dimensions. In the next two sections we will give the background material on convex sets and the Hilbert metric that is needed to understand the remainder of this thesis.

\section{Convex Sets}

In this section we will give some basic properties of convex sets that are needed to understand the isometries of the Hilbert metric. All of these ideas are well known and further details and proofs can be found in ~\cite{Convex} and ~\cite{Convex2}. Recall that a set is convex if the straight line segment between any two points in the set is contained in the set. For our purposes it is crucial to understand the structure of the boundary of a closed convex set so we can understand the asymptotic behavior of complete geodesics of the Hilbert metric. For the remainder of this section $C$ will denote a convex set in $\mathbb{R}^n$. We start with a few definitions that apply to any convex set.

A subset $A$ of $\mathbb{R}^n$ is an \textit{affine subspace} of $\mathbb{R}^n$ if for all $x_1,x_2$ in $A$ and $\lambda_1,\lambda_2$ in $\mathbb{R}$ satisfying $\lambda_1+\lambda_2=1$ then $\lambda_1x_1+\lambda_2x_2$ is also in $A$. The intersection of affine subspaces is an affine subspace therefore any subset $S$ of $\mathbb{R}^n$ is contained in a minimal affine subspace. The minimal affine subspace containing $S$ is called the \textit{affine hull} of $S$ and is denoted by \textit{aff}$(S)$. The \textit{relative interior} of $C$ is the interior of $C$ in the affine hull of $C$ and the \textit{dimension} of $C$ is the dimension of the affine hull of $C$. The \textit{relative boundary} of $C$ is the boundary of $C$ in \textit{aff}$(C)$. Note that if $C$ is a convex domain in $\mathbb{R}^n$ then the relative interior and relative boundary of $C$ are just the usual interior and boundary of $C$ in $\mathbb{R}^n$.

For the remainder of this section we will assume that $C$ is a closed convex set unless otherwise mentioned. A convex subset $F\subset C$ is a \textit{face} of $C$ if for any two points $x,y$ in $C$ such that the open line segment $(x,y)$ intersects $F$ in at least one point then $[x,y]$ is contained in $F$. The entire set $C$ and the empty set are the improper faces of $C$ and any other face is a proper face of $C$. Recall that an extreme point of $C$ is a point in $C$ which is not contained in the interior of any open line segment in $C$. Another way to say this is that an extreme point of $C$ is a face of $C$ of dimension zero. A face of $C$ with dimension one less than the dimension of $C$ is called a \textit{facet} of $C$.

If $S$ is a set of faces of $C$ then we can define $inf(S)$ to be the intersection of the elements of $S$ and $sup(S)$ to be the intersection of the faces of $C$ that are contained in every element of $S$. If $\mathcal{F}(C)$ is the set of faces of $C$ then this set with inclusion and the above two operations forms a complete lattice. Corollary 5.7 of ~\cite{Convex} shows that the relative interiors of the faces in $\mathcal{F}(C)\setminus\emptyset$ partition $C$. In the case we care about, when $C$ is a convex domain, it follows that the relative interiors of the proper faces of $C$ partition the boundary of $C$.

A hyperplane $H$ in $\mathbb{R}^n$ is a \textit{supporting hyperplane} of $C$ if $C$ is contained in one of the closed halfspaces determined by $H$ and $H$ and $C$ are not disjoint. If $C$ is a non-empty closed convex set then it is the intersection of its supporting halfspaces, see theorem 4.5 of ~\cite{Convex}. This is known as the external representation of the convex set $C$. If $C$ is a compact convex set then it also has an internal representation as convex combinations of its extreme points, see theorem 5.10 of ~\cite{Convex}.

\begin{thm}[Krein-Milman Theorem]
If $C$ is a compact convex subset of $\mathbb{R}^n$ then $C$ is the convex hull of its extreme points.
\end{thm}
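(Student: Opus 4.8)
The plan is to prove the two inclusions separately. Since $C$ is convex and contains each of its extreme points, it contains the convex hull of its extreme points, which gives the easy inclusion $\mathrm{conv}(\mathrm{ext}(C))\subseteq C$, where $\mathrm{ext}(C)$ denotes the set of extreme points of $C$. The content of the theorem is the reverse inclusion, that every point of $C$ is a (finite) convex combination of extreme points, and I would establish this by induction on the dimension $d$ of $C$. Working inside $\mathit{aff}(C)$, I may assume $C$ is full-dimensional, so that its relative interior and relative boundary are the ordinary interior and boundary in $\mathbb{R}^d$.

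The key lemma needed for the inductive step is that an extreme point of a proper face of $C$ is itself an extreme point of $C$. This follows directly from the definition of a face: if $e$ is extreme in a face $F$ and lies in the interior of an open segment $(x,y)$ with $x,y\in C$, then since the open segment meets $F$ we must have $[x,y]\subseteq F$, forcing $x,y\in F$ and contradicting the extremality of $e$ in $F$.

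For the induction itself, the base cases are immediate: if $d=0$ then $C$ is a single point, its own unique extreme point, and if $d=1$ then $C$ is a compact segment $[a,b]$ whose extreme points are exactly $a$ and $b$. Assuming the result for all compact convex sets of dimension less than $d$, let $x\in C$. If $x$ lies on the boundary of $C$, the external representation of $C$ as an intersection of supporting halfspaces yields a supporting hyperplane $H$ through $x$; then $F=C\cap H$ is a compact convex face of dimension at most $d-1$, so by the inductive hypothesis $x$ is a convex combination of extreme points of $F$, each of which is an extreme point of $C$ by the lemma. If instead $x$ lies in the interior of $C$, I choose any line through $x$, which meets $C$ in a segment $[a,b]$ with $a,b$ on the boundary; the previous case expresses $a$ and $b$ as convex combinations of extreme points of $C$, and writing $x$ as a convex combination of $a$ and $b$ finishes the argument.

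The step I expect to require the most care is the inductive reduction. I must confirm that a boundary point $x$ admits a supporting hyperplane, which I would extract from the stated external representation by observing that if every supporting halfspace contained $x$ in its interior then $x$ would be interior to $C$. I must also verify that the resulting face $F=C\cap H$ has dimension strictly less than $d$, which holds because $H$ is a hyperplane in $\mathit{aff}(C)$ that does not contain all of $C$. The face lemma is short to prove but is the conceptual crux, as it is what allows the lower-dimensional extreme points produced by the inductive hypothesis to be promoted to genuine extreme points of $C$.
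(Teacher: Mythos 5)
The paper does not prove this statement at all: it quotes the Krein--Milman theorem as a classical fact, citing Theorem 5.10 of its reference on convex sets, so there is no in-paper argument to compare against. Your proof is the standard finite-dimensional argument by induction on dimension, and it is essentially correct: the face lemma (an extreme point of a face of $C$ is an extreme point of $C$) is proved correctly from the paper's definition of face, the set $F=C\cap H$ is indeed a face in that sense (if an open segment of $C$ meets $H$ and $C$ lies in one closed halfspace of $H$, both endpoints must lie in $H$), and the reduction of an interior point to two boundary points uses compactness of $C$ legitimately. The one step you should tighten is the existence of a supporting hyperplane through a boundary point $x$. Your stated justification --- that if $x$ were interior to every supporting halfspace it would be interior to $C$ --- is a true statement, but it does not follow in one line from the external representation, because an infinite intersection of closed halfspaces each containing $x$ in its interior need not contain $x$ in its interior. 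The standard fix is a limiting argument: take points $y_k\notin C$ with $y_k\to x$, separate each $y_k$ from $C$ by a unit-normal supporting halfspace, and pass to a convergent subsequence of normals to produce a supporting hyperplane through $x$ (equivalently, invoke the separation theorem directly). With that step supplied, the induction closes and the proof is complete.
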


\noindent This theorem can be stated in further generality but this version is all we will need for our purposes.

\vspace{5mm}

\begin{figure}[h]
    \centering
    \def\svgwidth{\columnwidth}
    \begin{overpic}[]{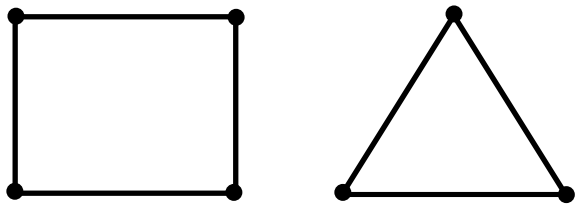}
    \end{overpic}
    \vspace{3mm}
    \caption{A convex set is the convex hull of its extreme points. In the case of polyhedral \noindent domains the extreme points are just the vertices.}
\end{figure}

From the Krein-Milman theorem we can deduce several properties of convex domains $\Omega$ that we will use throughout the rest of this thesis. First it guarantees that $\Omega$ has at least one extreme point. Furthermore, since $\Omega$ has non empty interior $\Omega$ contains at least $n+1$ extreme points in general position. The convex hull of these extreme points is a closed $n$-simplex whose interior is contained in $\Omega$. This idea will be used often enough that we will give it a name. An {\em extreme m-simplex} in $\Omega$ is an $m$-simplex whose vertices are extreme points of $\Omega$ and whose interior is contained in $\Omega$. With this terminology the Krein-Milman theorem guarantees that $\Omega$ contains an extreme $n$-simplex where $n$ is the dimension of $\Omega$.

If $v_1,...,v_n$ are $n$ points in general position then $[v_1,...,v_n]$ will be used to denote the closed $n$-simplex with vertices $v_1,...,v_n$ and $(v_1,...,v_n)$ will denote the interior of this $n$-simplex.

We will now describe some special types of convex domains whose Hilbert isometries were studied in ~\cite{Polyhedral} and ~\cite{Harpe}. A convex domain $\Omega$ is \textit{strictly convex} if all of its proper faces are extreme points. In ~\cite{Harpe} de la Harpe showed that for any strictly convex domain the only isometries of the Hilbert metric are projective transformations. A convex domain is a \textit{polyhedral domain} if it is the intersection of a finite number of open halfspaces. In ~\cite{Polyhedral} it was shown that for polyhedral domains all isometries are projective except in the case of the $n$-simplex. This was after de la Harpe who originally showed that a triangle has an isometry which is not a projective transformation. In sections 4 and 5 we will give further details and ideas used in proving these results.

\section{Properties of the Hilbert Metric}

In this section we give some properties of the Hilbert metric that are needed in the remainder of this thesis. If $\Omega$ is a convex domain in $\mathbb{R}^n$ then the topology induced on $\Omega$ from the Hilbert metric is equivalent to the standard topology on $\Omega$. Therefore when discussing ideas such as continuity and accumulation points we can use our understanding of the standard topology. The following definitions give some of the terminology we will use throughout this thesis.

\begin{defn}
$L$ is a \textit{line} in $\Omega$ if $L=l\cap\Omega$ for some straight line $l$ in $\mathbb{R}^n$ that has non-empty intersection with $\Omega$. The notation $[x,y]$ will denote the closed line segment between two points $x$ and $y$ and $(x,y)$ will denote the open line segment.
\end{defn}

\begin{defn}
$r$ is a \textit{ray} in $\Omega$ if $r=s\cap\Omega$ for some ray $s$ in $\mathbb{R}^n$ which begins at a point in $\Omega$. If $r$ is a ray in $\Omega$ we will denote its unique accumulation point in $\partial\Omega$ by $a(r)$.
\end{defn}

A \textit{geodesic} in $\Omega$ is a path in $\Omega$ which locally is distance minimizing. That is $\zeta:I\rightarrow\Omega$ is a \textit{geodesic} if for any $t\in I$ there exists an open set $U\subset I$ with $t\in U$ such that
\[ d_\Omega(\zeta(t_1),\zeta(t_2))=|t_1-t_2|\]
\noindent for all $t_1,t_2$ in $U$.

Since an isometry maps geodesics to geodesics we can study these maps by determining what they do to geodesics. In order to do this we need to know what the geodesics of the Hilbert metric are. If $x$ and $y$ are two points in a convex domain $\Omega$ then the line segment $[x,y]$ is a geodesic from $x$ to $y$. However, in general this is not the only geodesic from $x$ to $y$. Proposition 2 in ~\cite{Harpe} gives a criterion for determining if the straight line segment is a unique geodesic between two points. Since we will use this criterion throughout the thesis we state it here for convenience.

\begin{prop}[de la Harpe]
Let $\Omega$ be a convex domain in $\mathbb{R}^n$ and $x$ and $y$ be two points in $\Omega$. Furthermore, let $\alpha$ and $\beta$ be the points in the boundary of $\Omega$ where the straight line containing $x$ and $y$ intersects $\partial\Omega$. The points $\alpha$ and $\beta$ are contained in the relative interiors of faces $F_\alpha$ and $F_\beta$, respectively, in $\partial\Omega$. The following are equivalent:

\vspace{3mm}
i.) There exists a point $z$ in $\Omega$ which does not lie on the line containing $x$ and \indent $y$ and $d_\Omega(x,y)=d_\Omega(x,z)+d_\Omega(y,z)$.

\vspace{3mm}
ii.) There exists open line segments $l_\alpha$ and $l_\beta$ containing $\alpha$ and $\beta$ and contained \indent in the relative interior of $F_\alpha$ and $F_\beta$ respectively such that $l_\alpha$ and $l_\beta$ lie in a \indent two dimensional plane.

\end{prop}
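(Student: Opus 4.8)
The plan is to reduce the equality in (i) to a rigidity statement about the boundary by splitting the Hilbert metric into its two underlying Funk (weak) metrics. For ordered points $p,q\in\Omega$ let $R(p,q)\in\partial\Omega$ be the point where the ray from $p$ through $q$ meets the boundary, and set $\vec d(p,q)=\ln\frac{|p-R(p,q)|}{|q-R(p,q)|}$. Unwinding the definition of $CR$ gives the decomposition $d_\Omega(p,q)=\vec d(p,q)+\vec d(q,p)$, and $\vec d$ obeys the triangle inequality $\vec d(p,r)\le\vec d(p,q)+\vec d(q,r)$. Substituting the decomposition and regrouping exhibits the Hilbert triangle defect as a sum of two nonnegative Funk defects:
\[ d_\Omega(x,z)+d_\Omega(z,y)-d_\Omega(x,y)=\big(\vec d(x,z)+\vec d(z,y)-\vec d(x,y)\big)+\big(\vec d(y,z)+\vec d(z,x)-\vec d(y,x)\big). \]
Hence (i) is equivalent to the simultaneous vanishing of both terms, i.e. to the two Funk equalities $\vec d(x,z)+\vec d(z,y)=\vec d(x,y)$ and $\vec d(y,z)+\vec d(z,x)=\vec d(y,x)$.

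The key lemma I would isolate is the equality condition for the Funk triangle inequality: for $z$ off the line through $p$ and $q$, one has $\vec d(p,z)+\vec d(z,q)=\vec d(p,q)$ if and only if the three forward boundary points $R(p,z)$, $R(z,q)$, $R(p,q)$ lie on a common line segment contained in $\partial\Omega$. The defect is the logarithm of an explicit product of length ratios, so the ``if'' direction is a direct computation; it is cleanest to verify it in the projectively normalized \emph{parallel-strip} picture, where the boundary line carrying the segment is sent to infinity and the logarithms telescope exactly as in the square model.

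The rigidity (``only if'') direction is the main obstacle, since the defect must be shown to be \emph{strictly} positive unless the boundary is flat in the relevant place. Here I would use the supporting line of $\Omega$ at $\beta=R(p,q)$: convexity places $R(p,z)$ and $R(z,q)$ on the interior side of this line, which forces the product of ratios above to exceed $1$, with equality precisely when the supporting line actually contains $R(p,z)$ and $R(z,q)$, that is, when these points lie on a genuine boundary segment through $\beta$. Controlling this equality case carefully (and the degenerate possibilities when $z$ approaches the line through $p$ and $q$) is the delicate part.

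Granting the lemma, the proposition follows quickly. Applying it to the two Funk equalities produces a boundary segment through $\beta=R(x,y)$ containing $R(x,z),R(z,y)$ and a boundary segment through $\alpha=R(y,x)$ containing $R(y,z),R(z,x)$. Each of these six points lies on a line joining two of $x,y,z$, so all of them lie in the plane $P=\mathrm{aff}\{x,y,z\}$; thus the two segments are coplanar. Because $\alpha,\beta$ lie in the relative interiors of $F_\alpha,F_\beta$ and the relative interiors of the faces partition $\partial\Omega$, small subsegments $l_\alpha,l_\beta$ about $\alpha,\beta$ lie in $\mathrm{relint}(F_\alpha)$ and $\mathrm{relint}(F_\beta)$, giving (ii). For the converse I would pass to the two-dimensional slice $Q=\Omega\cap P$, on which $d_\Omega$ restricts to the Hilbert metric of $Q$ and $l_\alpha,l_\beta$ become boundary segments, and invoke the ``if'' direction of the lemma to manufacture a point $z\in Q$ off the line realizing the equality in (i).
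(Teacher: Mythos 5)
Your proposal is correct, but be aware that the paper does not actually prove this proposition: it is imported as Proposition 2 of de la Harpe's article ~\cite{Harpe} and stated only ``for convenience,'' so the only available comparison is with de la Harpe's original argument, which works directly in the plane $\mathrm{aff}\{x,y,z\}$ using cross ratios and perspectivities (an explicit construction of $z$ for (ii)$\Rightarrow$(i), a strict cross-ratio inequality for the converse). Your route through the Funk decomposition $d_\Omega(p,q)=\vec d(p,q)+\vec d(q,p)$ is genuinely different and arguably cleaner: the displayed identity is correct, both Funk defects are nonnegative, so (i) really is equivalent to the two separate Funk equalities, and your key lemma is exactly the equality case of the Funk triangle inequality. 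The most efficient way to nail down both halves of that lemma at once is the supporting-hyperplane formula $\vec d(p,q)=\sup_H \ln\bigl(d(p,H)/d(q,H)\bigr)$, the supremum being attained precisely at supporting hyperplanes through $R(p,q)$; taking $H$ supporting at $R(p,q)$ and telescoping gives the inequality, and equality forces $R(p,z),R(z,q)\in H$, so the three points span a segment in $\overline{\Omega}\cap H\subset\partial\Omega$. Two details you should make explicit when writing this up. First, since $z$ is off the line $xy$, the points $R(x,z)$ and $R(z,y)$ lie on opposite sides of that line, so $\beta$ is genuinely interior to the segment they span; the paper's Lemma 8.1 then upgrades this to an open subsegment of $\mathrm{relint}(F_\beta)$, which is what (ii) literally requires. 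Second, in the converse direction a generic $z$ with only $R(x,z)\in l_\beta$ does not realize equality: you must take $z$ close enough to the open segment $(x,y)$ that all four forward points $R(x,z)$, $R(z,y)$, $R(y,z)$, $R(z,x)$ land on the lines carrying $l_\beta$ and $l_\alpha$ (continuity of the radial map of the slice $Q$ from an interior point supplies this, since all four points tend to $\beta$ or $\alpha$ as $z$ approaches $(x,y)$). With those points filled in, the argument is complete.
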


It follows that $[x,y]$ is the unique geodesic from $x$ to $y$ if and only if for any open line segment $l_\alpha$ containing $\alpha$ and contained in $F_\alpha$ every open line segment containing $\beta$ and contained in $F_\beta$ is skew to $l_\alpha$.

\begin{figure}[h]
    \centering
    \def\svgwidth{\columnwidth}
    \begin{overpic}[]{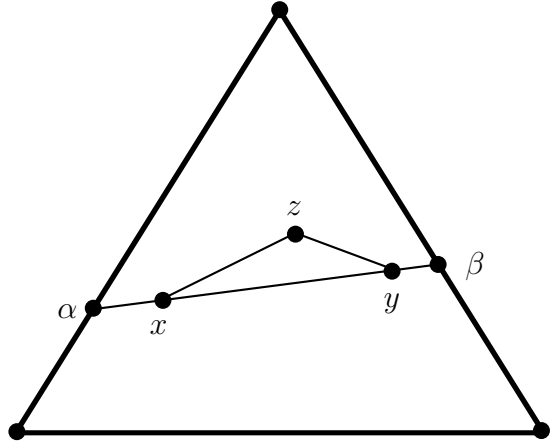}
        \put (26,20) {$x$}
        \put (69,25) {$y$}
        \put (51,42) {$z$}
        \put (9,23) {$\alpha$}
        \put (84,31) {$\beta$}
    \end{overpic}
    \vspace{3mm}
    \caption{The line segment $[x,y]$ is not a unique geodesic from $x$ to $y$ in this triangle because we can find open line segments around $\alpha$ and $\beta$ in the boundary of $\Omega$ which are coplanar. The path obtained by going from $[x,z]$ to $[z,y]$ is also a geodesic from $x$ to $y$. This is because $d_\Omega(x,y)=d_\Omega(x,z)+d_\Omega(y,z)$. Because geodesics are not unique it is possible to have an isometry which is not a projective transformation.}
\end{figure}

Using the criterion in proposition 3.3 we can construct convex domains that have points which do not have a unique geodesic between them. Hence the image of a straight line under a Hilbert isometry could be a geodesic which is not a line. Thus we will give a special name to those lines and rays for which geodesics between two points on the line are unique.

\begin{defn}
A straight line L in $\Omega$ is {\em rigid} if for any two points $a$ and $b$ on $L$, the line segment $[a,b]$ is the unique geodesic from $a$ to $b$. This same idea can be used to define terms such as rigid line segment and rigid ray as well.
\end{defn}

Rigid lines are useful because we know how they behave under isometries and understanding their behavior is the key to proving a given isometry is a projective transformation.

\begin{lem}
Suppose that $\tau:(\Omega,d_{\Omega})\rightarrow(\Omega',d_{\Omega'})$ is an isometry. If $L$ is a rigid line (ray) in $\Omega$ then $\tau(L)$ is a rigid line (ray) in $\Omega'$.
\end{lem}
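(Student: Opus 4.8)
The plan is to reduce everything to two facts already available: straight line segments are geodesics, and an isometry together with its inverse carries geodesics to geodesics bijectively, so that the property of being \emph{the unique} geodesic between two points is transported by $\tau$. I would first show that $\tau$ sends the rigid segment $[a,b]$ onto the straight segment $[\tau(a),\tau(b)]$ for every pair $a,b$ on $L$; this yields that $\tau(L)$ is straight. I would then show that $\tau(L)$ is rigid by running the same transport-of-uniqueness argument in the reverse direction.

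First I would fix two points $a,b$ on $L$ and observe that $[\tau(a),\tau(b)]$ is a geodesic from $\tau(a)$ to $\tau(b)$ in $\Omega'$, since straight segments are geodesics and $\Omega'$ is convex. Because $\tau^{-1}$ is an isometry it maps this geodesic to a geodesic from $a$ to $b$ in $\Omega$. As $L$ is rigid, $[a,b]$ is the \emph{unique} geodesic from $a$ to $b$, so $\tau^{-1}([\tau(a),\tau(b)])=[a,b]$, i.e. $\tau([a,b])=[\tau(a),\tau(b)]$. Applying this to three points $a,b,c$ on $L$ with $b$ between $a$ and $c$ forces $\tau(b)\in[\tau(a),\tau(c)]$, so the images of the points of $L$ are collinear and $\tau(L)$ lies on a single straight line $\ell'$. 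Finally, since the Hilbert distance from a fixed interior point to $x$ tends to infinity as $x$ approaches $\partial\Omega$, the image of $L$ must accumulate on $\partial\Omega'$ at both ends (for a ray, at its one end), so $\tau(L)=\ell'\cap\Omega'$ is a genuine line (ray) in $\Omega'$.

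For rigidity of $\tau(L)$ I would argue symmetrically. Let $\gamma'$ be any geodesic from $\tau(a)$ to $\tau(b)$. Then $\tau^{-1}(\gamma')$ is a geodesic from $a$ to $b$, which by rigidity of $L$ equals $[a,b]$; pushing forward gives $\gamma'=\tau([a,b])=[\tau(a),\tau(b)]$. Hence $[\tau(a),\tau(b)]$ is the unique geodesic between its endpoints for every $a,b$ on $L$, which is exactly the assertion that $\tau(L)$ is rigid. The ray case is identical, using that a rigid ray is the intersection with $\Omega$ of a straight ray and that $\tau$ preserves its interior endpoint.

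The only real subtlety, and the step I would be most careful about, is the passage of \emph{uniqueness} through $\tau$. One might be tempted to invoke Proposition 3.3 and pull back the off-line point witnessing a second geodesic in $\Omega'$, but this runs into the awkward case where the preimage of that point happens to lie on the line through $a$ and $b$, where the criterion does not directly apply. The cleaner route, taken above, is to note that $\tau$ induces a bijection between geodesics from $a$ to $b$ and geodesics from $\tau(a)$ to $\tau(b)$, so ``exactly one geodesic'' is preserved outright, with no appeal to the boundary-face criterion. I expect that verifying $\tau(L)$ fills out the whole chord $\ell'\cap\Omega'$ rather than a proper subsegment is the only other point needing a word, and it follows from the blow-up of $d_{\Omega'}$ near $\partial\Omega'$.
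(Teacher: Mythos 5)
Your proof is correct and follows essentially the same approach as the paper, whose own proof is the single sentence that geodesics are preserved by isometries so the claim follows from the definition of rigid line. Your write-up simply fills in the details that sentence leaves implicit -- in particular the observation that $[\tau(a),\tau(b)]$ is itself a geodesic, so uniqueness in $\Omega$ forces $\tau([a,b])=[\tau(a),\tau(b)]$, and the check that the image fills out the whole chord -- and does so correctly.
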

\begin{proof}
Geodesics are preserved by isometries so it follows from the definition of rigid line that $\tau(L)$ is a rigid line in $\Omega'$.
\end{proof}

A line in $\Omega$ has two accumulation points in the boundary of $\Omega$. If at least one of these points is an extreme point of $\overline{\Omega}$ then this line is a rigid line because of the criterion in proposition 3.3. But this is not the only example of a rigid line. For example, if $\Omega$ is a 3-simplex any line in $\Omega$ between pairs of skew edges in the boundary of $\Omega$ is a rigid line.

A \textit{complete geodesic} in $\Omega$ is a map $\zeta:\mathbb{R}\rightarrow\Omega$ such that for any two points $t_1, t_2\in\mathbb{R}$ the path from $t_1$ to $t_2$ determined by $\zeta$ is a geodesic. A line in $\Omega$ is an example of a complete geodesic. In general complete geodesics in $\Omega$ have exactly two accumulation points in $\partial\Omega$ and geodesic rays must have an accumulation point in $\partial\Omega$. For proofs and more details see ~\cite{Minkowski}. There are convex domains with complete geodesics whose accumulation points in $\partial\Omega$ are extreme points and yet the geodesic is not a line. For example let $x$ be a point in a triangle. Then the path formed by taking two rays starting at $x$ to two of the vertices of the triangle is a complete geodesic whose accumulation points are extreme points of the triangle.

The next theorem is a well known result about when the cross ratios of two groups of four collinear points are the same.

\begin{thm}[Cross Ratio Theorem]
Suppose $p$ is a point in $\mathbb{R}^2$ and $L$ is a line through $p$. Let $r_i$ for $i=1,..,4$ be distinct rays starting at $p$ lying in the same halfspace determined by $L$ as in figure 4. If $\alpha_i$ are collinear points on $r_i$ and $\beta_i$ are collinear points on $r_i$ then
\[CR(\alpha_1,\alpha_2,\alpha_3,\alpha_4)=CR(\beta_1,\beta_2,\beta_3,\beta_4).\]
\end{thm}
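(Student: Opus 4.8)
The plan is to reduce the cross ratio of four collinear points to a quantity that depends only on the angles between the four rays $r_1,\dots,r_4$, and hence is the same whether we measure it along the $\alpha_i$ or along the $\beta_i$. The key tool is computing the area of a triangle in two different ways.

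First I would fix two indices $i,j$ and consider the triangle with vertices $p$, $\alpha_i$, $\alpha_j$. Writing $\theta_{ij}$ for the angle at $p$ between the rays $r_i$ and $r_j$, its area equals $\tfrac12|p-\alpha_i|\,|p-\alpha_j|\sin\theta_{ij}$. On the other hand, because $\alpha_1,\dots,\alpha_4$ are collinear, all of these triangles share a common base line; letting $h_\alpha$ denote the perpendicular distance from $p$ to that line, the same area equals $\tfrac12|\alpha_i-\alpha_j|\,h_\alpha$. Equating the two expressions and solving gives
\[ |\alpha_i-\alpha_j| = \frac{|p-\alpha_i|\,|p-\alpha_j|\sin\theta_{ij}}{h_\alpha}. \]

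Next I would substitute this into the definition of the cross ratio. In
\[ CR(\alpha_1,\alpha_2,\alpha_3,\alpha_4)=\frac{|\alpha_1-\alpha_3|}{|\alpha_1-\alpha_2|}\frac{|\alpha_4-\alpha_2|}{|\alpha_4-\alpha_3|}, \]
each of the four distance factors contributes a $|p-\alpha_i|$ for each of its two endpoints together with one copy of $h_\alpha$ in the denominator. The four copies of $h_\alpha$ cancel, and each of $|p-\alpha_1|,\dots,|p-\alpha_4|$ appears exactly once in the overall numerator and once in the overall denominator, so these cancel as well. What remains is
\[ CR(\alpha_1,\alpha_2,\alpha_3,\alpha_4)=\frac{\sin\theta_{13}\,\sin\theta_{24}}{\sin\theta_{12}\,\sin\theta_{34}}. \]

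Finally, the right-hand side depends only on the angles $\theta_{ij}$ between the rays and not at all on where the points lie along them. Repeating the identical computation for $\beta_1,\dots,\beta_4$ (with their own common distance $h_\beta$) produces the same expression in the same angles, so the two cross ratios agree. I expect the only real care needed is bookkeeping: tracking which angle pairs with which factor and confirming that every $|p-\alpha_i|$ and each $h_\alpha$ genuinely cancels. The hypotheses that the four rays are distinct and lie in a single halfspace determined by $L$ are what guarantee that each $\theta_{ij}$ is nondegenerate, so that $\sin\theta_{ij}>0$ and all of the areas, and hence the final formula, are well defined.
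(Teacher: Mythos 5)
Your argument is correct. The paper does not actually prove this theorem; it states it as a well-known fact (it is the classical invariance of the cross ratio under central projection), so there is no in-paper proof to compare against. Your area computation is the standard elementary derivation: equating $\tfrac12|p-\alpha_i|\,|p-\alpha_j|\sin\theta_{ij}$ with $\tfrac12|\alpha_i-\alpha_j|\,h_\alpha$ and cancelling does yield
\[
CR(\alpha_1,\alpha_2,\alpha_3,\alpha_4)=\frac{\sin\theta_{13}\,\sin\theta_{24}}{\sin\theta_{12}\,\sin\theta_{34}},
\]
which visibly depends only on the rays. The only points worth making explicit are (i) $h_\alpha>0$, i.e.\ $p$ does not lie on the line through the $\alpha_i$ --- this holds because a line through $p$ meets at most two distinct rays emanating from $p$, and you have four; and (ii) since the four rays are distinct and lie in the same open halfspace determined by $L$, no two are opposite, so every $\theta_{ij}\in(0,\pi)$ and the unsigned sines are positive, which you already noted. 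With those observations the proof is complete.
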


\vspace{3mm}
\begin{figure}[h]
    \centering
    \def\svgwidth{\columnwidth}
    \begin{overpic}[]{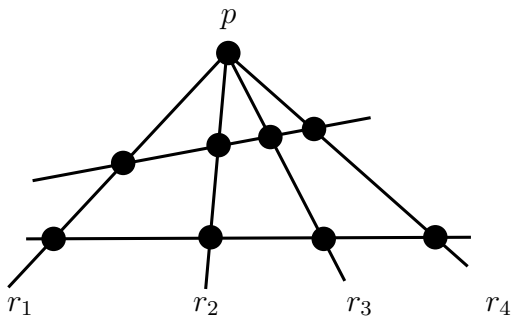}
        \put (46,58) {$p$}
        \put (0,-4) {$r_1$}
        \put (40,-4) {$r_2$}
        \put (73,-4) {$r_3$}
        \put (103,-4) {$r_4$}
    \end{overpic}
    \vspace{3mm}
    \caption{Cross Ratio Theorem}
\end{figure}

In the remainder of this section we will give an alternative definition of the Hilbert metric which is due to Birkhoff. To prove the results of this thesis we will not use this definition however this second definition was used in ~\cite{Polyhedral} to study isometries of polyhedral domains. The reader should consult ~\cite{Polyhedral}, ~\cite{Walsh2} and ~\cite{Birkhoff} for further details on this second definition of the Hilbert metric.

An open convex cone $C$ is \textit{proper} if $\overline{C}\cap-\overline{C}=\{0\}$. If $C$ is a proper open convex cone then we can define a partial order $\leq_C$ on $\mathbb{R}^{n+1}$ by $x\leq_C y$ if $x-y$ is in $\overline{C}$. Then we define the following for $x\in C$ and $y\in\mathbb{R}^{n+1}$:
\[ M(y/x,C)=inf\{\lambda>0: y\leq_C\lambda x\} \]

\noindent The Hilbert metric on $C$ is defined to be:

\vspace{3mm}
\begin{center}
$d_C(x,y)=ln(M(y/x,C))+ln(M(x/y,C))$ for $x,y\in C$
\end{center}

\vspace{3mm}
\noindent Since $C$ is a proper open cone this gives a well defined metric on the rays of $C$. If $\Omega$ is an $n$-dimensional convex domain in $\mathbb{R}^{n+1}$ which does not contain the origin then let $C_\Omega$ be the proper open convex cone obtained by taking all the rays starting from the origin in $\mathbb{R}^{n+1}$ that pass through a point in $\Omega$. We can see that $(\Omega,d_\Omega)$ and $(C_\Omega,d_{C_\Omega})$ are isometric because $d_\Omega$ and $d_{C_\Omega}$ agree on $\Omega$, see ~\cite{Birkhoff}. This alternative definition of the Hilbert metric is sometimes referred to as Hilbert's projective metric to differentiate it from the original definition.

\section{$Isom(\Omega)$ for Strictly Convex Domains}

The first type of convex domains whose isometries were studied was the strictly convex case. Recall that a convex domain $\Omega$ is strictly convex if all the proper faces of $\Omega$ are extreme points. An example of a strictly convex domain is an ellipsoid which we have already seen is isometric to the projective model for hyperbolic space. In ~\cite{Harpe} de la Harpe proved the following theorem about the isometry group of $\Omega$ when $\Omega$ is strictly convex.

\begin{thm}[de la Harpe]
If $\Omega$ is a strictly convex domain then
\[Isom(\Omega)=PGL(\Omega).\]
\end{thm}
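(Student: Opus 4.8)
The inclusion $PGL(\Omega)\subseteq Isom(\Omega)$ always holds, since projective transformations preserve cross ratios and hence the Hilbert metric, so the content of the theorem is the reverse inclusion. The plan is to take an arbitrary $\tau\in Isom(\Omega)$ and show that it is the restriction of a projective transformation. The strategy is to exploit the fact that strict convexity forces \emph{every} straight line in $\Omega$ to be rigid, conclude from this that $\tau$ carries lines to lines, and then upgrade the resulting collineation to a projective transformation via the fundamental theorem of projective geometry.

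First I would establish the key geometric fact that in a strictly convex domain every line is rigid. Let $L$ be a line in $\Omega$ with boundary accumulation points $\alpha$ and $\beta$, lying in the relative interiors of faces $F_\alpha$ and $F_\beta$ of $\overline{\Omega}$. Because $\Omega$ is strictly convex, every proper face is an extreme point, so $F_\alpha=\{\alpha\}$ and $F_\beta=\{\beta\}$ are singletons and contain no open line segment. Condition (ii) of Proposition 3.3 can therefore never be satisfied, so by the equivalence in that proposition the straight segment $[x,y]$ is the unique geodesic between any two points $x,y$ on $L$; thus $L$ is rigid. Next, since $\tau$ is an isometry so is $\tau^{-1}$, and by Lemma 3.5 each of them carries rigid lines to rigid lines. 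As every line in $\Omega$ is rigid, $\tau$ maps lines to lines and $\tau^{-1}$ maps lines to lines, so $\tau$ is a bijective collineation of $\Omega$, preserving collinearity in both directions.

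Finally I would upgrade this collineation to a projective transformation. Working in the affine chart $\mathbb{R}^n\subset\mathbb{RP}^n$ containing $\Omega$, a bijection of an open convex set that preserves collinearity in both directions is the restriction of an element of $PGL_{n+1}(\mathbb{R})$; this is a local form of the fundamental theorem of projective geometry, using the fact that the only field automorphism of $\mathbb{R}$ is the identity. The resulting projective map $p$ agrees with $\tau$ on $\Omega$ and satisfies $p(\Omega)=\Omega$, so $p\in PGL(\Omega)$ and $\tau=p|_\Omega\in PGL(\Omega)$, giving $Isom(\Omega)\subseteq PGL(\Omega)$ and hence equality.

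I expect the main obstacle to be precisely this last step. The fundamental theorem is classically stated for collineations of all of projective space, so some care is needed to apply it to a map defined only on the convex domain $\Omega$; one approach is to extend the line structure to enough of $\mathbb{RP}^n$, and another is to pin $\tau$ down on an extreme $n$-simplex (which exists by the Krein--Milman discussion) and then verify that $\tau$ and the candidate projective map agree on all of $\Omega$ by propagating along lines. The case $n=1$ is degenerate and should be treated separately, since a $1$-dimensional strictly convex domain is simply an open interval whose Hilbert geometry is isometric to the real line.
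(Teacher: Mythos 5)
Your proof is correct and follows essentially the same route as de la Harpe's argument that the paper sketches in Section 4: strict convexity makes every line rigid via Proposition 3.3, so an isometry is a collineation of $\Omega$, which must then be a projective transformation. The only real difference is in the last step, where you invoke a local form of the fundamental theorem of projective geometry while de la Harpe extends the map to $\overline{\Omega}$, checks that cross ratios of collinear points are preserved, and matches the isometry with a projective transformation on a projective basis before propagating the agreement --- and you correctly flag that justifying the local fundamental theorem on an open convex set (rather than all of $\mathbb{RP}^n$) is exactly where the remaining work lies.
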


The assumption that $\Omega$ is strictly convex guarantees that geodesics in $\Omega$ are unique due to proposition 3.3. Using our terminology, every line in $\Omega$ is rigid. This means that the image of any line in $\Omega$ under an isometry is a line. The main idea in the proof of theorem 4.1 was to show that because an isometry maps lines to lines it can be extended to $\overline{\Omega}$ and therefore preserves cross ratios of collinear points in $\overline{\Omega}$.

A \textit{projective basis} for $\mathbb{RP}^n$ is a set of $n+2$ points in $\mathbb{RP}^n$ such that each subset of $n+1$ of these points is linearly independent. To finish the proof of theorem 4.1 de la Harpe shows that if we choose a projective basis in $\Omega$ then its image under any isometry of $\Omega$ is a projective basis. There exists a projective transformation that agrees with the given isometry on this basis. By extending the region on which the isometry and projective transformation agree on de la Harpe shows that in fact they must be the same map. For further details see the proof of proposition 3 in ~\cite{Harpe}.

Using the results of this thesis we obtain a second proof of this theorem. Any strictly convex domain $\Omega$ must contain an extreme line because all the proper faces of $\Omega$ are extreme points. Thus it follows from theorem 1.3 that any isometry of $\Omega$ is a projective transformation. We have the following generalization of de la Harpe's theorem.

\begin{thm}
If $\tau:\Omega\rightarrow\Omega'$ is an isometry and $\Omega$ is strictly convex then $\Omega'$ is strictly convex and $\tau$ is a projective transformation.
\end{thm}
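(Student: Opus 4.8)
The plan is to leverage Theorem 1.3 exactly as the paragraph preceding the statement suggests, but I must first establish the two claims in the statement separately: that $\tau$ is projective, and that $\Omega'$ is strictly convex. For the first claim I would argue that any strictly convex domain contains an extreme line. Since $\Omega$ is strictly convex, every proper face of $\Omega$ is an extreme point; in particular, take any line $L = l \cap \Omega$ whose two accumulation points $\alpha, \beta \in \partial\Omega$ are the intersections of $l$ with the boundary. Each of $\alpha$ and $\beta$ lies in the relative interior of some proper face, which by strict convexity must be a zero-dimensional face, hence $\alpha$ and $\beta$ are themselves extreme points. Therefore $L$ is an extreme line. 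Such a line exists because $\Omega$ has nonempty interior, so we can certainly draw a straight line meeting $\Omega$. With an extreme line in hand, Theorem 1.3 applies directly and yields that $\tau$ is a projective transformation.

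Next I would deduce that $\Omega'$ is strictly convex. Since $\tau$ is now known to be projective, it extends to a projective transformation $p \in PGL_{n+1}(\mathbb{R})$ with $p(\Omega) = \Omega'$. Projective transformations are homeomorphisms of $\mathbb{RP}^n$ that send straight lines to straight lines, hence they carry $\overline{\Omega}$ to $\overline{\Omega'}$ and $\partial\Omega$ to $\partial\Omega'$, mapping line segments in the boundary to line segments in the boundary. The property of being an extreme point is thus preserved: $e \in \partial\Omega$ is extreme if and only if it lies in no open boundary segment, and $p$ carries open boundary segments to open boundary segments bijectively. Consequently, if every proper face of $\Omega$ were an extreme point but $\Omega'$ contained a nondegenerate boundary segment, pulling that segment back through $p^{-1}$ would produce a nondegenerate boundary segment in $\partial\Omega$, contradicting strict convexity of $\Omega$. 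Hence $\Omega'$ is strictly convex as well.

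I expect the only genuine subtlety, rather than a major obstacle, to be the verification that a strictly convex domain actually \emph{contains} an extreme line in the precise sense of the definition given in the excerpt (a straight line in $\Omega$ whose closure meets $\partial\Omega$ in extreme points). The definition requires the closure of the line to intersect $\partial\Omega$ in extreme points, and I would make sure the two boundary endpoints are handled by the face-partition statement from Section 2, namely that the relative interiors of the proper faces partition $\partial\Omega$, so that each boundary endpoint belongs to exactly one face and that face is necessarily a point. Everything else is a routine transfer of structure along the projective map, so once the extreme line is produced the proof is essentially immediate from Theorem 1.3 together with the invariance of extreme points under projective transformations.
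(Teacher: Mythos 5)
Your proposal is correct and follows essentially the same route as the paper: strict convexity makes every line in $\Omega$ an extreme line, so Theorem 1.3 gives projectivity, and then strict convexity transfers to $\Omega'$. The only cosmetic difference is in the second half, where the paper invokes its lemma that an isometry carries extreme lines to extreme lines (so every line in $\Omega'$ is extreme), while you argue directly that the projective map preserves boundary segments and hence extreme points; both are valid and essentially equivalent once $\tau$ is known to be projective.
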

\begin{proof}
Because $\Omega$ contains an extreme line it follows from theorem 1.3 that $\tau$ is a projective transformation. Since every line in $\Omega$ is a an extreme line it follows from lemma 9.8 that every line in $\Omega'$ is an extreme line and therefore $\Omega'$ is strictly convex.
\end{proof}

From theorem 4.2 we have that  two strictly convex domains are isometric if and only if they are projectively equivalent and that a strictly convex domain can not be isometric to a convex domain which is not strictly convex.

\section{$Isom(\Omega)$ for Polyhedral Domains}

A convex domain $\Omega$ is polyhedral if it is the intersection of a finite number of open halfspaces. In ~\cite{Polyhedral} Lemmens and Walsh proved that a polyhedral domain has an isometry which is not a projective transformation if and only if it is an $n$-simplex. The special case of the triangle was studied first by de la Harpe in ~\cite{Harpe}. Some of the ideas used by de la Harpe to determine the isometry group for the triangle will be generalized and used throughout the thesis so we will now give an overview of his work.

Since any two triangles are projectively equivalent we will work with the standard open 2-simplex \[\Delta_2=\{(x,y,z)\in\mathbb{R}^3:x>0,y>0,z>0, x+y+z=1\}.\] We will first give de la Harpe's description of $PGL(\Delta_2)$. Given a diagonal matrix
\[ M=\left( \begin{array}{ccc}
\lambda_1 & 0         & 0 \\
0         & \lambda_2 & 0 \\
0         & 0         & \lambda_3
\end{array} \right) \]

\noindent in $PGL_3(\mathbb{R})$ with positive diagonal entries we get a map of $\Delta_2$ defined by
\[ M\left( \begin{array}{c}
x  \\
y   \\
z
\end{array} \right) = \frac{1}{\lambda_1x+\lambda_2y+\lambda_3z}\left( \begin{array}{c}
\lambda_1x  \\
\lambda_2y   \\
\lambda_3z
\end{array} \right)\]

\noindent If $T_2$ is the group of equivalence classes of all such matrices $M$ then this definition gives a transitive action of $T_2$ on $\Delta_2$. Furthermore the equivalence classes in $PGL_3(\mathbb{R})$ of the six permutation matrices act on $\Delta_2$. It follows that $PGL(\Delta_2)=T_2\rtimes S_3$ where $S_3$ is the symmetric group on 3 elements.

To show that $\Delta_2$ has an isometry which is not a projective transformation de la Harpe showed that there is an isometry between $\Delta_2$ and a normed vector space known as the hexagonal plane. Given a finite dimensional real vector space $V$ and a symmetric convex domain $K$ in $V$ containing the origin, the Minkowski functional for $K$ is the function $p_K:V\rightarrow [0,\infty)$ defined by
\[p_k(v)=inf\{\lambda\in\mathbb{R}: \lambda>0, v\in\lambda K\}.  \]

\noindent It is well known that $p_K$ is a norm on $V$. The hexagonal plane is the normed vector space $W_2=\{(x,y,z)\in\mathbb{R}^3: x+y+z=0\}$ equipped with the norm arising from the Minkowski functional $p_K$ where $K$ is the regular hexagon containing the origin in $W_2$ whose edges are determined by the equations $|x-y|=1$, $|x-z|=1$ and $|y-z|=1$. It follows that the unit ball of the hexagonal plane is a hexagon. de la Harpe defined a function $\Lambda_2:\Delta_2\rightarrow W_2$ by
\[\Lambda_2(x,y,z)=\frac{1}{3}(ln(\frac{x^2}{yz}),ln(\frac{y^2}{xz}),ln(\frac{z^2}{xy}))\]

\noindent and showed that $\Lambda_2$ is an isometry from $\Delta_2$ with the Hilbert metric to the hexagonal plane, see ~\cite{Harpe} proposition 7 for more details. It follows that $Isom(\Delta_2)=\mathbb{R}^2\rtimes D_6$ where $D_6$ is the group of symmetries of a hexagon. Notice that $PGL(\Delta_2)$ is an index 2 subgroup of $Isom(\Delta_2)$. Theorem 1.1 shows that this is the only possibility for general convex domains which have non projective isometries.

The function $f:W_2\rightarrow W_2$ defined by $f(v)=-v$ is an isometry of the hexagonal plane and so $\gamma=\Lambda_2^{-1}f\Lambda_2$ is an isometry of $\Delta_2$ with the Hilbert metric. The map $\gamma$ is the first example of an isometry of the Hilbert metric which is not a projective transformation. In addition, every non projective isometry of $\Delta_2$ is obtained by composing $\gamma$ with a projective transformation of $\Delta_2$.

As was mentioned in section 3 of this thesis there are points in $\Delta_2$ which contain many geodesics between them and because of this $\Delta_2$ can have isometries like $\gamma$ which are not projective. The image under $\gamma$ of any straight line in $\Delta_2$ which passes through the center of $\Delta_2$ is a straight line if and only if it is a rigid line, see proposition 8 in ~\cite{Harpe}. Since projective transformations map straight lines to straight lines it follows that $\gamma$ is not projective. The domain $\Delta_2$ contains three families of rigid lines, one for each vertex. $\gamma$ maps each of these families of lines to the same family of lines with the orientation reversed. This idea is important for understanding the general behavior of isometries of the Hilbert metric in any dimension as will be seen later in this thesis.

\vspace{5mm}
\begin{figure}[h]
    \centering
    \def\svgwidth{\columnwidth}
    \begin{overpic}[]{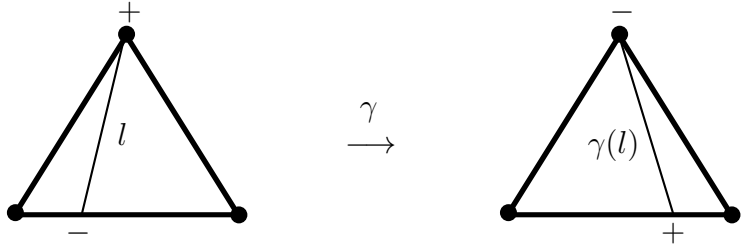}
        \put (46,10) {$\longrightarrow$}
        \put (48,15) {$\gamma$}
        \put (15,28) {$+$}
        \put (8,-2) {$-$}
        \put (89,-2) {$+$}
        \put (82,28) {$-$}
        \put (15,11){$l$}
        \put (79,10){$\gamma(l)$}
    \end{overpic}
    \vspace{3mm}
    \caption{$\gamma$ maps the three families of lines through the vertices of $\Delta_2$ to themselves but with reversed orientation.}
\end{figure}

Also note that $\gamma^2$ is the identity transformation. We will use this idea later that show that the square of any Hilbert isometry is a projective transformation. It turns out that $\gamma$ has the form of a quadratic transformation which in projective coordinates is given by
\[\gamma[x:y:z]=[x^{-1}:y^{-1}:z^{-1}].\]

\noindent From this formula it can be seen that $\gamma$ blows up the vertices of $\Delta_2$ to edges and blows down the edges of $\Delta_2$ to vertices, see ~\cite{Harpe} proposition 8 for more details.

One can generalize the idea of the hexagonal plane by defining \[W_n=\{(x_1,...,x_{n+1})\in\mathbb{R}^{n+1}: x_1+...+x_{n+1}=0\}\] to be a normed space analogous to the hexagonal plane. Then we get an isometry $\Lambda_n:\Delta_n\rightarrow W_n$ where $\Delta_n$ is the $n$-simplex with the Hilbert metric. de la Harpe did not determine the isometry group for the $n$-simplex in ~\cite{Harpe} but he gave the definition of $W_n$ and $\Lambda_n$.

In addition to the case of the triangle he showed that all isometries of a square are projective and its isometry group is isomorphic to $D_4$. Furthermore he showed that the interior of a 2-dimensional polyhedral domain has a finite isometry group if it has more that three vertices, see proposition 4 in ~\cite{Harpe}. Most of these results have been subsumed by the work of Lemmens and Walsh in ~\cite{Polyhedral} as we will see below.

Note that the existence of $\Lambda_n$ shows that the interior of an $n$-simplex with the Hilbert metric is isometric to a normed space. But the converse of this is also true, that is if $\Omega$ is a convex domain with the Hilbert metric which is isometric to a normed space then $\Omega$ must be the interior of an $n$-simplex. For the proof of the converse see ~\cite{Minkowski}.

The relationship between $Isom(\Omega)$ and $PGL(\Omega)$ for general polyhedral domains was completely determined by Lemmens and Walsh in ~\cite{Polyhedral}. They proved the following theorem
\begin{thm}[Lemmens, Walsh]
If $\Omega$ is a convex polyhedral domain with the Hilbert metric then
\[Isom(\Omega) = PGL(\Omega)\]

\noindent if and only if $\Omega$ is not an n-simplex. Furthermore if $\Omega$ is an n-simplex then
\[ PGL(\Omega)\cong\mathbb{R}^{n}\rtimes\sigma_{n+1} \hspace{7mm}and\hspace{7mm} Isom(\Omega)\cong\mathbb{R}^n\rtimes\Gamma_{n+1} \]

\noindent where $\sigma_{n+1}$ is the group of coordinate permutations of $\mathbb{R}^{n+1}$ and $\Gamma_{n+1}$ is the product of $\sigma_{n+1}$ with the group generated by the function $x\rightarrow -x$.

\end{thm}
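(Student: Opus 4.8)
The plan is to treat the two halves of the statement separately: the explicit computation of $Isom(\Delta_n)$ and $PGL(\Delta_n)$ when $\Omega$ is an $n$-simplex, and the equality $Isom(\Omega)=PGL(\Omega)$ when $\Omega$ is not a simplex. Since any two $n$-simplices are projectively equivalent, for the first half it is harmless to work with $\Delta_n$ and the explicit isometry $\Lambda_n:\Delta_n\to W_n$ onto the normed space $W_n=\{x\in\mathbb{R}^{n+1}:\sum_i x_i=0\}$.

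For the simplex case I would first identify $PGL(\Delta_n)$ directly, exactly as in the $n=2$ computation recalled above: the positive diagonal matrices modulo scaling form a group $T_n\cong\mathbb{R}^n$ acting simply transitively on $\Delta_n$, the $(n+1)\times(n+1)$ permutation matrices realize $\sigma_{n+1}$, and together they give $PGL(\Delta_n)=T_n\rtimes\sigma_{n+1}\cong\mathbb{R}^n\rtimes\sigma_{n+1}$. To compute $Isom(\Delta_n)$ I would transport the problem through $\Lambda_n$ and invoke the Mazur--Ulam theorem: every surjective isometry of the finite-dimensional normed space $W_n$ is affine, so $Isom(\Delta_n)\cong Isom(W_n)\cong\mathbb{R}^n\rtimes O(W_n,\|\cdot\|)$, where $O(W_n,\|\cdot\|)$ is the group of linear isometries, equivalently the linear symmetry group of the unit ball. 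The key step is then that this symmetry group is exactly $\Gamma_{n+1}=\sigma_{n+1}\times\{\pm 1\}$: coordinate permutations preserve the symmetric norm, the antipodal map $v\mapsto -v$ is a linear isometry (the non-projective element corresponding to $\gamma[x_0:\cdots:x_n]=[x_0^{-1}:\cdots:x_n^{-1}]$), and one must check that these generate all linear isometries by analyzing the facets of the unit ball. Tracking $\Lambda_n$ shows that the diagonal torus and permutations land in the $\sigma_{n+1}$ factor while $-\mathrm{id}$ is realized by no projective map, so $PGL(\Delta_n)$ is the index-two subgroup $\mathbb{R}^n\rtimes\sigma_{n+1}$, which simultaneously proves $Isom(\Delta_n)\neq PGL(\Delta_n)$.

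For the non-simplex direction the cleanest tool is Theorem 1.3. If $\Omega$ is polyhedral but not a simplex and admits two extreme points lying in no common proper face, then the segment between them passes through the interior, its closure meets $\partial\Omega$ exactly in those two extreme points, and hence it spans an extreme line; Theorem 1.3 then gives $Isom(\Omega,\Omega')=PGL(\Omega,\Omega')$ at once. This covers, in particular, every non-simplex polyhedral domain in dimension at most $3$, for which a direct case check always produces such an extreme line.

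The main obstacle is precisely the class of polyhedral domains that are not simplices yet contain no extreme line at all, namely the neighborly (more generally, diagonal-free) polytopes such as the cyclic polytopes in dimension $\geq 4$: there every pair of vertices already spans an edge of $\partial\Omega$, so no vertex-to-vertex segment meets the interior and Theorem 1.3 cannot be applied. For these I would pass to Birkhoff's cone model $(C_\Omega,d_{C_\Omega})$ and study its horofunction boundary; the detour metric on the Busemann points of a polyhedral Hilbert geometry is a combinatorial invariant encoding the facet--vertex incidence structure of $\Omega$, and every isometry must respect it. The aim is to show that the induced symmetry of this structure is always realized by a projective transformation unless $\Omega$ is a simplex (where the extra facet--vertex duality produces $\gamma$); phrased in the language of this thesis, one shows that every isometry of such an $\Omega$ possesses a focusing point and is therefore projective by Theorem 1.2. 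Establishing this combinatorial rigidity exactly where the extreme-line shortcut is unavailable is the genuinely hard step carried out by Lemmens and Walsh; the remainder of the argument simply assembles the two cases, using Theorem 1.1 to organize the index-two relationship between $PGL(\Omega)$ and $Isom(\Omega)$.
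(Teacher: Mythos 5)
First, a point of comparison: the paper does not actually prove this theorem. It is attributed to Lemmens and Walsh, and Section 5 only sketches their method (horofunctions, Busemann points, the detour metric, and the dichotomy between isometries that preserve vertex parts and facet parts and those that interchange them). So your proposal has to stand as a free-standing argument. Your simplex half is sound in outline and is essentially the standard route: $PGL(\Delta_n)$ is generated by the positive diagonal torus and the permutation matrices (any projective map preserving $\Delta_n$ permutes its vertices and facets, hence lifts to a monomial matrix), and Mazur--Ulam reduces $Isom(\Delta_n)$ to translations and the linear symmetry group of the unit ball of $(W_n,\|\cdot\|)$. The one substantive step you leave unexecuted --- that this symmetry group is exactly $\sigma_{n+1}\times\{\pm 1\}$ and nothing more --- is a finite computation on the vertices and facets of that polytope, so flagging it is acceptable for a plan.

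The gap is in the non-simplex half, and it is twofold. (i) Your claim that every non-simplex polyhedral domain of dimension at most $3$ contains an extreme line is false: in the cone over a square (a square pyramid) every pair of vertices lies in a common proper face --- the base diagonals lie in the square facet and all other pairs span edges --- so no vertex-to-vertex segment meets the interior; the join of a segment with a rectangle is another such example (these are precisely the exceptions listed in Lemma 13.1 of this paper). So the diagonal-free class you must handle already appears in dimension $3$, not only among neighborly polytopes in dimension $\geq 4$, and your "direct case check" does not close it. (ii) More seriously, for the diagonal-free polytopes you supply no argument at all: you describe what should be shown (that every isometry respects the facet--vertex incidence data encoded by the detour metric, hence has a focusing point) and then explicitly defer "the genuinely hard step" to Lemmens and Walsh. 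That step is the entire content of the non-simplex direction --- it is exactly Theorems 6.1 and 7.1 of their paper --- so the proposal does not prove this direction; it restates it. A final caution on circularity: you cannot invoke Theorems 1.1--1.3 of this thesis as black boxes here, since Lemma 10.7 and Theorem 13.2 themselves cite the Lemmens--Walsh result. Your use of Theorem 1.3 in the extreme-line case is defensible only because its proof depends on their paper solely through the simplex case, which your first half establishes independently; you should make that ordering explicit.
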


We will now give a general overview of the main ideas and concepts used by Lemmens and Walsh to prove Theorem 5.1. For definitions and further details the reader should consult ~\cite{Polyhedral}, ~\cite{Walsh}, and ~\cite{Rieffel}.

In ~\cite{Walsh} Walsh defines the horofunctions and horoboundary of a convex domain with the Hilbert metric using a method originally given by Gromov in ~\cite{Gromov}. A Busemann point is a horofunction which is the limit of an almost geodesic, see ~\cite{Rieffel}. In particular every geodesic in $\Omega$ determines a Busemann point.   Walsh gives a characterization of the Busemann points for the Hilbert metric, see theorem 1.1 ~\cite{Walsh}. Lemmens and Walsh show that there is a metric on the set of Busemann points of a Hilbert geometry called the detour metric which is possibly infinite. Given an isometry between two convex domains with the Hilbert metric the distance between Busemann points given by the detour metric is also preserved.

A \textit{part} is a subset of the set of Busemann points on which the detour metric is finite. To each part they assign a face of $\Omega$. If this face is a vertex or facet of $\Omega$ then the part is called a vertex part or facet part. The key to finishing the proof is to show that any isometry of polyhedral domains either sends vertex parts to vertex parts and facet parts to facet parts or it interchanges the two. If vertex parts are sent to vertex parts then the isometry extends continuously to the boundary of the Hilbert domain and it follows that the isometry is a projective transformation. If the isometry interchanges vertex parts and facet parts then the polyhedral domain must be an $n$-simplex, see theorems 6.1 and 7.1 in ~\cite{Polyhedral}.

To prove the results in this thesis we will not be using the detour metric or Busemann points, however the idea of focusing points which are introduced in section 8 are in some sense an attempt to extend this idea of interchanging vertex and facet parts to a general convex domain using extreme points and rigid lines.

\section{$Isom(\Omega)$ for Symmetric Cones}

The isometries of the Hilbert metric for symmetric convex cones were studied by Molnar in ~\cite{Molnar} and Bosche in ~\cite{Cones}. Molnar studied the case of the symmetric cone of positive-definite Hermitian matrices and Bosche extended his results to general symmetric cones.

A convex domain $\Omega$ is an \textit{open convex cone} if for all $\lambda\in\mathbb{R}_{+}$ we have $\lambda\Omega\subset\Omega$. A convex cone is \textit{proper} if it does not contain a complete line. The \textit{dual cone} of $\Omega$ is the following set
\[ \Omega^*=\{x\in\mathbb{R}^n: \langle x,y\rangle>0, \forall y\in\Omega \}\]

\noindent which is also a cone. For our purposes a \textit{homogeneous cone} is an open proper convex cone $\Omega$ for which $PGL(\Omega)$ acts transitively on $\Omega$. A homogeneous cone is a \textit{symmetric cone} if in addition it is equal to its dual cone. For more details on homogeneous and symmetric cones the reader should see ~\cite{Cones} and ~\cite{Kai}. Bosche proved the following result about isometries of the Hilbert metric for symmetric cones.

\begin{thm}[Bosche]
If $\Omega$ is symmetric cone then $PGL(\Omega)$ is a normal subgroup of $Isom(\Omega)$ and has index at most 2. Furthermore
\[PGL(\Omega)=Isom(\Omega)\]

\noindent if and only if $\Omega$ is a Lorentz cone.
\end{thm}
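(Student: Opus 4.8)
\emph{Proof proposal.} The normality of $PGL(\Omega)$ and the index bound come essentially for free from the general theory already in place. The plan is to pass to a cross-section: the Hilbert (projective) metric on the cone $\Omega$ is isometric to the Hilbert metric on a cross-section $\Omega_0$ of $\Omega$ by an affine hyperplane (Section 3), and $\Omega_0$ is an open bounded convex domain since $\Omega$ is proper. A linear automorphism of the cone induces a projective automorphism of $\Omega_0$ and conversely, so under this identification $Isom(\Omega)=Isom(\Omega_0)$ and $PGL(\Omega)=PGL(\Omega_0)$ as a group with a distinguished subgroup. Applying Theorem 1.1 to $\Omega_0$ then gives at once that $PGL(\Omega)$ is normal in $Isom(\Omega)$ with index at most $2$. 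What remains is to decide, in each case, whether the index is $1$ or $2$, that is, to establish the Lorentz dichotomy.

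For the direction that a Lorentz cone forces $Isom(\Omega)=PGL(\Omega)$: the cross-section of the Lorentz cone $\{x_0>\sqrt{x_1^2+\cdots+x_n^2}\}$ by the hyperplane $x_0=1$ is the open round ball, which is strictly convex. By de la Harpe's Theorem 4.1 every isometry of a strictly convex domain is projective, so $Isom(\Omega_0)=PGL(\Omega_0)$ and hence $Isom(\Omega)=PGL(\Omega)$.

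For the converse I would exhibit an explicit non-projective isometry whenever $\Omega$ is not Lorentzian. Here I invoke that, by the Koecher--Vinberg theorem, a symmetric cone is the cone of squares in a Euclidean Jordan algebra $V$ of some rank $r$, with generic norm (determinant) $\det$ of degree $r$, and that the Jordan inversion $j(x)=x^{-1}$ is defined on $\Omega$. Two facts drive the argument. First, $j$ is a Hilbert isometry: inversion is order-reversing on the cone, so for $\lambda>0$ one has $y\leq_\Omega\lambda x\iff x^{-1}\leq_\Omega\lambda\,y^{-1}$, whence $M(y/x,\Omega)=M(x^{-1}/y^{-1},\Omega)$ in Birkhoff's formula and therefore $d_\Omega(x^{-1},y^{-1})=d_\Omega(x,y)$. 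Second, $j$ is projective if and only if $r=2$: writing $x^{-1}=\det(x)^{-1}\,x^{\sharp}$ with $x^{\sharp}$ the Jordan adjugate, the induced self-map of projective space is $[x]\mapsto[x^{\sharp}]$, given by polynomials of degree $r-1$, which is linear exactly when $r=2$. Since the rank-$2$ symmetric cones are precisely the Lorentz cones, for every non-Lorentzian $\Omega$ the map $j$ lies in $Isom(\Omega)\setminus PGL(\Omega)$, forcing the index to be exactly $2$. Combining the two directions yields the stated equivalence.

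The main obstacle is the Jordan-theoretic input to the converse. Verifying the Löwner-type antitonicity $0\leq_\Omega x\leq_\Omega y\Rightarrow y^{-1}\leq_\Omega x^{-1}$, identifying $x^{-1}$ through the degree-$(r-1)$ adjugate $x^{\sharp}$, and matching rank-$2$ cones with Lorentz cones all rest on the structure theory of Euclidean Jordan algebras, and that classification is the substantive ingredient. The geometric payoff is that $j$ is the exact analogue for symmetric cones of the Cremona map $\gamma[x:y:z]=[x^{-1}:y^{-1}:z^{-1}]$ on the triangle $\Delta_2$, the cross-section of $\mathbb{R}^3_{++}$, discussed in Section 5 (the rank-$3$ case): it blows minimal faces of $\partial\Omega$ up to larger faces precisely when $r\geq 3$, which is why it cannot be projective outside the Lorentz case.
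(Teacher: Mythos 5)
This is a result the paper cites from Bosche rather than proves; the only ``proof'' in the text is a one-paragraph sketch saying that Bosche uses the Jordan-algebra inversion (equivalently Vinberg's $*$-map, an involution of $\Omega=\Omega^*$) and that this map is projective precisely in the Lorentz case. Your proposal follows exactly that route for the substantive ``furthermore'' direction, and your verification that inversion is a Hilbert isometry (antitonicity of $x\mapsto x^{-1}$ plus $M(y/x,\Omega)=M(x^{-1}/y^{-1},\Omega)$ in Birkhoff's formula) and that it fails to be projective for rank $r\geq 3$ is sound. The one genuine difference is that you obtain normality and the index bound by passing to a bounded cross-section and invoking Theorem 1.1; that is logically valid but makes Bosche's theorem a corollary of the paper's main result (proved much later, in Section 15), whereas Bosche established the bound directly for symmetric cones using the Jordan structure --- worth flagging if you care about logical order of the exposition. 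Two small points to tighten: the claim that a map given by homogeneous polynomials of degree $r-1\geq 2$ cannot be projective deserves a word (the components of $x^{\sharp}$ have no common factor, or more geometrically, $j$ blows rank-one faces up to facets, which no projective map can do --- you already note this at the end and could promote it to the actual argument); and the identification of rank-$2$ symmetric cones with Lorentz cones should cite the classification of Euclidean Jordan algebras explicitly, since as you say it is the substantive input.
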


The Lorentz cone in dimension $n$ is the set of points satisfying the inequality $x_1^2 > x_2^2+...+x_n^2$. The projectivization of this cone is just an ellipsoid of dimension $n-1$ and is a model for hyperbolic $(n-1)$-space. An ellipsoid is strictly convex so this part of the theorem adds nothing new to the results of de la Harpe in ~\cite{Harpe} on strictly convex domains.

However the theorem gives us the first examples of convex domains, besides $n$-simplices, which have isometries that are not projective transformations. For example, the cone on a disk is a non Lorentzian symmetric cone, see ~\cite{Harpe} proposition 5, so it has an isometry which is not a projective transformation. Any rigid line through the vertex of this cone is mapped by this isometry to a rigid line through the vertex but with the orientation reversed.

In ~\cite{Polyhedral} Lemmens and Walsh conjectured that $\Omega$ has an isometry which is not projective if and only if a cone over $\Omega$ is a non Lorentzian symmetric cone and these isometries are generated by Vinberg's $*-map$. This conjecture was established by Walsh in ~\cite{Walsh2}.

On any homogeneous cone we can define Vinberg's $*-map$ as follows, let $\phi:\Omega\rightarrow\mathbb{R}_+$ be the characteristic function on $\Omega$ defined by
\[\phi(x)=\int_{\Omega^*}e^{-\langle x,y\rangle}dy \]

\noindent where $y$ is the Euclidean measure. Then Vinberg's $*-map$ is the function from $\Omega$ to $\Omega^*$ which maps $x$ to $x^*$ where
\[ x^*=-\nabla log\phi(x). \]

To prove theorem 6.1 Bosche used the well known idea of associating a Jordan algebra to a symmetric cone. The Jordan algebra of a symmetric cone has an inverse map which is an involution. This inverse map coincides with Vinberg's $*-map$ under a suitable interpretation. Since $\Omega=\Omega^*$ this map gives us an isometry of $\Omega$ which is an involution. In the case of a Lorentz cone this map is a projective transformation. Since we will not be working with Jordan algebras for the remainder of this thesis the reader should see ~\cite{Kai} and ~\cite{Cones} for further information on Vinberg's $*-map$ and the Jordan algebra associated to a symmetric cone and then end of ~\cite{Polyhedral} for more on how this relates to isometries of the Hilbert metric.

\section{Cross Sections of $\Omega$}

In this section we will start studying the Hilbert metric on a general convex domain. We begin by giving the definition for a cross section of $\Omega$ and some of the important properties of these objects.

\begin{defn}
Let $P$ be an affine subspace in $\mathbb{R}^n$ with $dim(P)=m$ and $2\leq m\leq n$. If $D=P\cap\Omega$ is non empty then $D$ is an $m$-dimensional \textit{cross section of $\Omega$}. If $D\neq\Omega$ then $D$ is a proper cross section of $\Omega$.
\end{defn}

\begin{lem}
If $D$ is an m-dimensional cross section of $\Omega$ then $D$ is a convex domain of dimension $m$ in $P$ and the relative boundary of $D$ is contained in $\partial\Omega$.
\end{lem}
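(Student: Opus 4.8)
The plan is to verify in turn the three properties that make $D=P\cap\Omega$ a convex domain — convexity, openness in $P$, and boundedness — together with the dimension count, and then to pin down the precise relationship between the relative boundary of $D$ (computed in $P$) and the boundary of $\Omega$ (computed in $\mathbb{R}^n$).

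First I would note that $D$ is the intersection of the two convex sets $P$ and $\Omega$, hence convex, and that $D\subset\Omega$ forces $D$ to be bounded. Openness is where the subspace topology enters: since $\Omega$ is open in $\mathbb{R}^n$, the set $D=P\cap\Omega$ is by definition relatively open in $P$. Concretely, for $x\in D$ I would choose $\epsilon>0$ with $B(x,\epsilon)\subset\Omega$; then $B(x,\epsilon)\cap P$ is a relatively open $m$-dimensional neighborhood of $x$ contained in $D$. A nonempty relatively open subset of the $m$-dimensional affine space $P$ has affine hull equal to all of $P$, so $\dim D=m$ and $D$ is a convex domain of dimension $m$ in $P$.

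For the boundary statement, the key point is that $P$ is closed in $\mathbb{R}^n$, so the closure $\overline{D}$ taken in $\mathbb{R}^n$ already lies in $P$ (since $\overline{D}\subseteq\overline{P}=P$) and therefore coincides with the closure of $D$ relative to $P$. Because $D$ is relatively open, its relative interior is $D$ itself, and so the relative boundary of $D$ is exactly $\overline{D}\setminus D$. Now I would take any $x$ in this relative boundary: from $x\in\overline{D}\subset\overline{\Omega}$ we get $x\in\overline{\Omega}$, and since $x\in P$, if $x$ were in $\Omega$ it would lie in $P\cap\Omega=D$, contradicting $x\notin D$. Hence $x\in\overline{\Omega}\setminus\Omega=\partial\Omega$, as desired.

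I do not anticipate a genuine obstacle; the only thing requiring care is keeping the two topologies straight (relative to $P$ versus relative to $\mathbb{R}^n$) and exploiting the closedness of $P$ to identify the two closures, after which the containment of the relative boundary in $\partial\Omega$ follows from the one-line contradiction above.
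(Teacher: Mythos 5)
Your proposal is correct and follows essentially the same route as the paper: convexity and boundedness from the intersection and from $D\subset\Omega$, openness in $P$ via an $\epsilon$-ball, and the dimension count from relative openness. The only cosmetic difference is in the boundary step, where you use $\overline{D}\subseteq\overline{\Omega}$ and the closedness of $P$ directly, while the paper exhibits a ray $[x,\alpha)$ inside $D$ to see that $\alpha$ is an accumulation point of $\Omega$; both arguments are valid.
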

\begin{proof}
$D$ is convex because it is the intersection of two convex sets and $D$ is bounded because $\Omega$ is bounded. If $x$ is in $D$ then there exists an open set $U$ of $\mathbb{R}^n$ containing $x$ with $U\subset\Omega$. Hence $P\cap U\subset D$ is an open subset of $P$ which contains $x$. Because $D$ is an open subset of $P$ is must have dimension $m$, so we are done with the first part of the lemma.

If $\alpha$ is in the relative boundary of $D$ then the ray $[x,\alpha)$ is contained in $D$ and therefore is in $\Omega$. Hence $\alpha$ is an accumulation point of $\Omega$ which is not in $\Omega$ and therefore is in $\partial\Omega$.
\end{proof}

\begin{lem}
If $D$ is a cross section of $\Omega$ then $d_D=d_\Omega$ for points in $D$.
\end{lem}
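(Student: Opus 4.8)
The plan is to reduce the claim to the definition of the Hilbert metric via the cross ratio and then observe that the four points entering the cross ratio for $d_D(x,y)$ are literally the same four points entering the cross ratio for $d_\Omega(x,y)$. Fix two distinct points $x,y\in D$ (the case $x=y$ being trivial, since both metrics assign distance $0$) and let $l$ be the unique straight line in $\mathbb{R}^n$ through $x$ and $y$. Because $x,y\in D\subseteq P$ and $P$ is an affine subspace, the entire line $l$ is contained in $P$.

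The key step is the identity $l\cap\Omega = l\cap D$. Since $l\subseteq P$, we have $l\cap\Omega = l\cap(P\cap\Omega)=l\cap D$. Now $\Omega$ is open, bounded, and convex, so $l\cap\Omega$ is an open segment whose two endpoints $\alpha,\beta$ lie on $\partial\Omega$; these are precisely the points used to define $d_\Omega(x,y)$. On the other hand, by lemma 7.2 the set $D$ is an $m$-dimensional convex domain in $P$, so, regarding $l$ as a line in $P$, the segment $l\cap D$ is open in $l$ and its two endpoints lie on the relative boundary of $D$; these are the points used to define $d_D(x,y)$. Since $l\cap\Omega$ and $l\cap D$ are the same segment, they share the same pair of endpoints, so the pair $\alpha,\beta$ coming from $\partial\Omega$ coincides with the pair coming from the relative boundary of $D$.

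Consequently the four collinear points $\alpha,x,y,\beta$ entering the cross ratio $CR(\alpha,x,y,\beta)$ are identical in the two computations, so the cross ratio takes the same value and $d_D(x,y)=\ln(CR(\alpha,x,y,\beta))=d_\Omega(x,y)$, as desired.

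There is no serious obstacle here; the proof is essentially an unwinding of definitions. The only point requiring care is confirming that the relative-boundary endpoints of $l\cap D$ in $P$ genuinely coincide with the $\partial\Omega$-endpoints of $l\cap\Omega$ in $\mathbb{R}^n$, and this is exactly what the set identity $l\cap\Omega=l\cap D$ delivers, with lemma 7.2 guaranteeing that the relative boundary of $D$ sits inside $\partial\Omega$ so that there is no inconsistency between the two boundary notions.
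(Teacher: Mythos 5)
Your proof is correct and follows essentially the same route as the paper: both identify the boundary points of the chord through $x$ and $y$ in $D$ with those in $\Omega$ via Lemma 7.2 and conclude from the cross-ratio definition. Your explicit set identity $l\cap\Omega=l\cap D$ is just a more detailed unwinding of the same step.
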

\begin{proof}
Let $x$ and $y$ be two points in $D$ and $\alpha$ and $\beta$ be the two points where the line between $x$ and $y$ intersects the relative boundary of $D$. By lemma 7.2 the points $\alpha$ and $\beta$ are in $\partial\Omega$. It follows from the definition of the Hilbert metric that $d_D(x,y)=d_\Omega(x,y)$.
\end{proof}

\section{Asymptotic Geometry}

In this section we will study some of the asymptotic geometry of the Hilbert metric. In particular it is important to understand the behavior of rays and geodesics as one approaches the boundary of a convex domain $\Omega$. We have seen that if two rigid rays converge to the same point in $\partial\Omega$ then it is possible that their images under a Hilbert isometry converge to different points in $\partial\Omega'$. Understanding this behavior will be the main focus of sections 8, 9 and 10.

We will now give a criterion for determining when two distinct points in $\partial\Omega$ lie in the relative interior of the same face of $\Omega$.

\begin{lem}
Two distinct points $\alpha$ and $\beta$ in $\partial\Omega$ lie in the relative interior $I$ of the same face of $\Omega$ if and only if there exists an open line segment $S\subset \partial\Omega$ with $\alpha$ and $\beta$ in $S$.
\end{lem}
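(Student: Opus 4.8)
The plan is to exploit the fact, recorded just after Corollary 5.7 of \cite{Convex} in Section 2, that the relative interiors of the proper faces of $\overline{\Omega}$ partition $\partial\Omega$. Thus each boundary point lies in the relative interior of a unique proper face; write $F_\alpha$ and $F_\beta$ for the proper faces with $\alpha\in\mathrm{relint}(F_\alpha)$ and $\beta\in\mathrm{relint}(F_\beta)$. I would handle the two implications separately, reformulating the left-hand condition as the single statement $F_\alpha=F_\beta$.

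For the forward direction, suppose $\alpha,\beta$ both lie in $I=\mathrm{relint}(F)$ for one proper face $F$. Let $\ell$ be the straight line through $\alpha$ and $\beta$; since $\alpha,\beta\in F$ we have $\ell\subset\mathrm{aff}(F)$. Because $I$ is open in $\mathrm{aff}(F)$ and convex, the set $\ell\cap I$ is a relatively open convex subset of $\ell$, i.e. an open line segment $S$, and it contains both $\alpha$ and $\beta$. As $F$ is a proper face its relative interior lies in $\partial\Omega$, so $S\subset I\subset\partial\Omega$, which is exactly what is required. This direction is essentially just the openness of the relative interior and needs no real work.

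The backward direction is where the content lies. Assume $S=(p,q)$ is an open line segment in $\partial\Omega$ containing $\alpha$ and $\beta$. Since $S\subset\partial\Omega\subset\overline{\Omega}$ and $\overline{\Omega}$ is closed, the endpoints satisfy $p,q\in\overline{\Omega}$, so $[p,q]\subset\overline{\Omega}$. Now the open segment $(p,q)$ meets $F_\alpha$ at the point $\alpha$, so the defining property of a face forces $[p,q]\subset F_\alpha$; in particular $\beta\in F_\alpha$. Symmetrically $\alpha\in F_\beta$. To upgrade ``$\beta\in F_\alpha$'' to ``$F_\alpha=F_\beta$'' I would invoke two structural facts about the face lattice of $\overline{\Omega}$: first, that the intersection of two faces is again a face (the meet $\inf$ in the lattice $\mathcal{F}(\overline{\Omega})$ recalled in Section 2), so $F_\alpha\cap F_\beta$ is a face contained in both; and second, the sub-lemma that any face of $\overline{\Omega}$ contained in $F_\alpha$ and meeting $\mathrm{relint}(F_\alpha)$ must equal $F_\alpha$.

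I expect this sub-lemma to be the only step requiring a genuine argument, and I would prove it directly: if $G\subset F_\alpha$ is a face meeting $\mathrm{relint}(F_\alpha)$ at a point $y$, then for any $z\in F_\alpha$ the relative-interior characterization gives a $w\in F_\alpha$ with $y\in(z,w)$; since $(z,w)$ meets the face $G$ at $y$ we get $[z,w]\subset G$, whence $z\in G$, so $G=F_\alpha$. Applying this with $G=F_\alpha\cap F_\beta$ and the relative interior point $\alpha$ gives $F_\alpha\cap F_\beta=F_\alpha$, i.e. $F_\alpha\subseteq F_\beta$; running the same argument with the point $\beta\in\mathrm{relint}(F_\beta)$ gives $F_\beta\subseteq F_\alpha$. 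Hence $F_\alpha=F_\beta$, and since $\beta\in\mathrm{relint}(F_\beta)=\mathrm{relint}(F_\alpha)$, the points $\alpha$ and $\beta$ indeed share the relative interior of a single face.
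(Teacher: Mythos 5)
Your proof is correct and follows essentially the same route as the paper: the forward direction rests on convexity and relative openness of $I$, and the backward direction applies the defining property of a face to the segment $S$ (noting its endpoints lie in $\overline{\Omega}$) and then uses the fact that the face whose relative interior contains a point is the smallest face containing that point. The only difference is that where the paper cites Theorem 5.6 of its convexity reference for this minimality, you prove it directly via your sub-lemma on faces meeting $\mathrm{relint}(F_\alpha)$, and that argument is sound.
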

\begin{proof}
First suppose $\alpha$ and $\beta$ lie in $I$. From theorem 3.4 in ~\cite{Convex} $I$ is a convex set and thus $[\alpha,\beta]$ is contained in $I$. Since $I$ is an open set in its affine hull and $dim(I)\geq1$ we can find an open segment $S\supset[\alpha,\beta]$ contained in $I$.

Now assume that there is an open line segment $S$ contained in $\partial\Omega$ and containing $\alpha$ and $\beta$. Let $F$ and $F'$ be the proper faces of $\overline{\Omega}$ whose relative interiors contain $\alpha$ and $\beta$ respectively. We will show that $F=F'$. Theorem 5.6 in ~\cite{Convex} tells us that $F$ is the smallest face of $\Omega$ containing $\alpha$ and $F'$ is the smallest face of $\Omega$ containing $\beta$. Since $S$ intersects $F$ it follows from the definition of face that $S$ and thus $\beta$ is contained in $F$ and so $F'\subset F$. The opposite inclusion follows from the same reasoning.
\end{proof}

The relative interiors $I$ and $I'$ of two faces of $\Omega$ are \textit{opposite} if there is a line in $\Omega$ from $I$ to $I'$. Throughout this thesis will make use of the fact that the relative interior of an extreme point is itself.

\begin{lem}
If $I$ and $I'$ are the relative interiors of faces of $\Omega$ which are opposite then any line from $I$ to $I'$ is contained in $\Omega$.
\end{lem}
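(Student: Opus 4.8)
The plan is to unwind the definition of \emph{opposite} and then upgrade the single line it provides to \emph{every} chord joining $I$ to $I'$. By hypothesis there is a line in $\Omega$ from $I$ to $I'$; writing its two accumulation points as $\alpha_0\in I$ and $\beta_0\in I'$, this line is a convex subset of a straight line with boundary endpoints $\alpha_0,\beta_0$, hence equals the open segment $(\alpha_0,\beta_0)$, so $(\alpha_0,\beta_0)\subset\Omega$. Fix now arbitrary $\alpha\in I$ and $\beta\in I'$; the goal is to show that the open segment $(\alpha,\beta)$ lies in $\Omega$, which is the real content of the lemma. One cannot simply invoke convexity of $\overline{\Omega}$, since that only yields $[\alpha,\beta]\subset\overline{\Omega}$ and does not prevent the interior of the chord from lying in $\partial\Omega$.

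First I would recall, exactly as was used in the proof of Lemma 8.1, that by Theorem 3.4 of~\cite{Convex} the relative interior of a face is convex, and moreover it is relatively open in its affine hull. Fix $t\in(0,1)$, set $p=(1-t)\alpha+t\beta$, and introduce the ``parallel'' point $p_0=(1-t)\alpha_0+t\beta_0$; since $(\alpha_0,\beta_0)\subset\Omega$ we have $p_0\in\Omega$. The key step is to slide just past the endpoints while staying inside the face interiors: because $I$ is relatively open and the direction $\alpha-\alpha_0$ is parallel to the affine hull of the face carrying $\alpha$, there is $\epsilon>0$ with $a:=\alpha+\epsilon(\alpha-\alpha_0)\in I$, and, after shrinking $\epsilon$, likewise $b:=\beta+\epsilon(\beta-\beta_0)\in I'$. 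Then $a,b\in\overline{\Omega}$, so $y:=(1-t)a+tb\in\overline{\Omega}$ by convexity, and a short computation gives $y=p+\epsilon(p-p_0)$. Thus $p$ is a strict convex combination of $p_0\in\Omega$ and $y\in\overline{\Omega}$.

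To finish I would use that $\Omega$ is open and convex, so that the half-open segment from an interior point $p_0$ to any point $y\in\overline{\Omega}$ lies in $\Omega$; hence $p\in(p_0,y)\subset\Omega$ (and if $p=p_0$ then $p\in\Omega$ already). As $t\in(0,1)$ was arbitrary, $(\alpha,\beta)\subset\Omega$, as desired; the same accessibility principle, applied on the far side of $\alpha$ or $\beta$, also shows no point of the line beyond these endpoints lies in $\overline{\Omega}$, so $\alpha,\beta$ are genuinely the accumulation points and $(\alpha,\beta)$ is a line in $\Omega$ from $I$ to $I'$. The main obstacle is precisely the middle step: convexity of $\overline{\Omega}$ only yields containment in $\overline{\Omega}$, and what upgrades this to membership in the open set $\Omega$ is the relative openness of the face interiors, which lets us push slightly beyond $\alpha$ and $\beta$ and thereby place $y$ strictly past $p$ on the ray emanating from the known interior point $p_0$.
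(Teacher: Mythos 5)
Your proof is correct, but it takes a genuinely different route from the paper's. The paper argues geometrically in two stages: it first moves only the endpoint in $I$, passing to the $2$-dimensional cross section $P\cap\Omega$ spanned by the given line $L$ and the candidate line $L'$, and uses Lemma 8.1 to see that $\alpha_1,\alpha_2$ lie in the relative interior of a common $1$-dimensional face of that cross section, whence $L'\subset\Omega$; it then repeats the argument to move the endpoint in $I'$. Your argument instead moves both endpoints at once by a purely affine computation: the prolongation property of relative interiors lets you push slightly past $\alpha$ and $\beta$ to points $a\in I$, $b\in I'$ (in fact $a,b\in\overline{\Omega}$ is all you use), and the identity $y=(1-t)a+tb=p+\epsilon(p-p_0)$ exhibits each point $p$ of the chord as a strict convex combination of the known interior point $p_0$ and a point $y\in\overline{\Omega}$, so the standard accessibility lemma ($[x,y)\subset\mathrm{int}\,C$ for $x\in\mathrm{int}\,C$, $y\in\overline{C}$) finishes. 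Your version is more elementary and self-contained -- it does not invoke Lemma 8.1 or cross sections at all and correctly isolates the real issue, namely that convexity of $\overline{\Omega}$ alone only puts the chord in $\overline{\Omega}$ -- while the paper's version reuses machinery (Lemma 8.1 and $2$-dimensional cross sections) that it needs elsewhere anyway. The only points needing care in your write-up, both of which you handle, are choosing a single $\epsilon$ that works for both endpoints and noting that $\epsilon$ may depend on $t$.
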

\begin{proof}
Let $L$ be a line in $\Omega$ that converges to $\alpha_1$ in $I$ and $\beta_1$ in $I'$. Let $\alpha_2$ be a point in $I$ and $L'$ the line from $\alpha_2$ to $\beta_1$. If $P$ is the two dimensional plane containing $L$ and $L'$ then $P\cap\Omega$ is a two dimensional convex domain which is contained in $\Omega$.

It follows from lemma 8.1 that there is an open segment $S\subset\partial\Omega$ which contains $\alpha_1$ and $\alpha_2$. Thus $\alpha_1$ and $\alpha_2$ lie in the relative interior of a 1-dimensional face of $P\cap\Omega$. Because $L$ is in $P\cap\Omega$ the same must be true for $L'$ and so $L'$ is contained in $\Omega$. If $\beta_2$ is a point in $I'$ then a similar argument as above shows that the line from $\alpha_2$ to $\beta_2$ is contained in $\Omega$ so the result follows.
\end{proof}

\begin{lem}
Suppose $I$ and $I'$ are the relative interiors of two proper faces of $\Omega$ and there exists a rigid line $L$ from $I$ to $I'$. If $L'$ is any line from $I$ to $I'$ then $L'$ is rigid.
\end{lem}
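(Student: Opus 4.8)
The plan is to reduce rigidity to a linear-algebraic condition on the pair of faces via the geodesic criterion, and then to show that this condition does not depend on the choice of endpoints within $I$ and $I'$. To set up notation, let $F_\alpha$ and $F_\beta$ be the faces of $\Omega$ whose relative interiors are $I$ and $I'$, and let $V_\alpha,V_\beta$ be the linear subspaces with $\operatorname{aff}(F_\alpha)=\alpha+V_\alpha$ and $\operatorname{aff}(F_\beta)=\beta+V_\beta$. By Lemma 8.3 every line from $I$ to $I'$, in particular $L'$, is contained in $\Omega$, so rigidity is meaningful for it; write the endpoints of $L'$ as $\alpha'\in I$, $\beta'\in I'$. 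The consequence of Proposition 3.3 recorded after its statement says that the line joining $\alpha\in I$ to $\beta\in I'$ is rigid exactly when no open segment through $\alpha$ inside $I$ is coplanar with an open segment through $\beta$ inside $I'$. Since $\alpha,\beta$ lie in relative interiors, the directions of such segments range over all of $V_\alpha\setminus\{0\}$ and $V_\beta\setminus\{0\}$. Using that two lines with directions $u,v$ through points whose difference is $\beta-\alpha$ are coplanar iff $u,v,\beta-\alpha$ are linearly dependent, I obtain the clean criterion: the line $\alpha\beta$ is \emph{non}-rigid iff there exist $u\in V_\alpha\setminus\{0\}$ and $v\in V_\beta\setminus\{0\}$ with $u,v,\beta-\alpha$ linearly dependent. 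The point of this reformulation is that the endpoints enter only through the single vector $\beta-\alpha$.

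Next I would change one endpoint at a time. Fix $\beta_1\in I'$, take $\alpha_1,\alpha_2\in I$, and set $d=\alpha_2-\alpha_1\in V_\alpha$. Assuming $\alpha_2\beta_1$ is non-rigid, there are $u_2\in V_\alpha\setminus\{0\}$ and $v\in V_\beta\setminus\{0\}$ with $u_2,v,\beta_1-\alpha_2$ dependent; writing $\beta_1-\alpha_2=(\beta_1-\alpha_1)-d$ and absorbing $d$ into the $V_\alpha$-slot, I would produce $u_1\in V_\alpha$ and $v\in V_\beta\setminus\{0\}$ exhibiting $u_1,v,\beta_1-\alpha_1$ as dependent, i.e. non-rigidity of $\alpha_1\beta_1$. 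Here the degenerate subcases must be handled: if the constructed $u_1$ vanishes then $\beta_1-\alpha_1\in V_\beta$ and any nonzero element of $V_\alpha$ does the job (such an element exists because non-rigidity forces $\dim F_\alpha\ge 1$); and if $u_2,v$ are already parallel, then $V_\alpha\cap V_\beta\neq\{0\}$ supplies a common direction making $\alpha_1\beta_1$ non-rigid directly. In every case non-rigidity of $\alpha_2\beta_1$ forces non-rigidity of $\alpha_1\beta_1$.

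Since $\alpha_1$ and $\alpha_2$ play symmetric roles, this shows that rigidity of $\alpha\beta_1$ is independent of $\alpha\in I$; taking contrapositives, rigidity of $L=\alpha_1\beta_1$ yields rigidity of $\alpha_2\beta_1$ for every $\alpha_2\in I$. Running the identical argument with $I$ and $I'$ interchanged then upgrades this to rigidity of $\alpha_2\beta_2$ for every $\beta_2\in I'$, and in particular gives rigidity of $L'$. The main obstacle is getting the coplanarity criterion exactly right: coplanarity of two lines is the condition $\dim\operatorname{span}(u,v,\beta-\alpha)\le 2$ rather than $\beta-\alpha\in\operatorname{span}(u,v)$, the distinction mattering precisely in the parallel-segment subcase, and then carefully discharging the degenerate subcases in the one-endpoint transfer, where the hypothesis $\dim F_\alpha\ge 1$ (equivalently, that $I$ is not a single extreme point, in which case the whole statement is vacuous) is exactly what is needed.
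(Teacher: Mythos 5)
Your proof is correct, and it follows the same basic strategy as the paper's---argue the contrapositive via the geodesic criterion of Proposition 3.3, transferring non-rigidity from one pair of endpoints to another by sliding the witnessing boundary segments within $I$ and $I'$---but your execution differs in a way that matters. The paper translates $S_{\alpha'}$ and $S_{\beta'}$ to segments through $\alpha$ and $\beta$ \emph{with the same directions} and asserts that the translated pair still spans a $2$-plane. That assertion is not automatic: coplanarity of two lines with directions $u,v$ through points differing by $w$ is the condition that $u,v,w$ be linearly dependent, and $w$ changes when the base points move; so when $\dim F_\alpha\ge 2$ one can translate within the face and turn a coplanar pair into a skew pair while keeping both directions fixed. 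Your reformulation---$\alpha\beta$ is non-rigid iff there exist $u\in V_\alpha\setminus\{0\}$ and $v\in V_\beta\setminus\{0\}$ with $u,v,\beta-\alpha$ linearly dependent---isolates the endpoint-dependence in the single vector $\beta-\alpha$, and your one-endpoint-at-a-time transfer correctly absorbs the displacement $d\in V_\alpha$ into the $V_\alpha$-direction (thereby \emph{changing} the direction of the witnessing segment, which is precisely the adjustment the paper's proof omits), with the degenerate subcases (the new $u_1$ vanishing, or $u_2$ parallel to $v$) handled properly. The only blemishes are cosmetic: the containment of $L'$ in $\Omega$ is Lemma 8.2, not Lemma 8.3 (which is the statement being proved), and when $I$ is an extreme point the lemma is trivially true rather than vacuous, since every line ending at an extreme point is rigid. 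Net: your argument is a tightened version of the paper's, and the extra care is exactly what is needed for faces of dimension at least two.
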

\begin{proof}
Let $\alpha$ and $\beta$ be the points in $I$ and $I'$ respectively which are the accumulation points of $L$ in $\partial\Omega$. Let $\alpha'$ be a point in $I$ and $\beta'$ be a point in $I'$. If the line from $\alpha'$ to $\beta'$ is not rigid then proposition 3.3 implies that there exists an open segment $S_\alpha' \subset I$ containing $\alpha'$ and an open segment $S_\beta'\subset I'$ containing $\beta'$ such that $S_\alpha'$ and $S_\beta'$ span a 2-dimensional plane.

Since $\alpha$ and $\alpha'$ lie in the interior of the open convex set $I$ there is an open segment $S_\alpha\subset I$ containing $\alpha$ which points in the same direction as $S_\alpha'$. Similarly we can find an open segment $S_\beta\subset I'$ containing $\beta$ which points in the same direction as $S_\beta'$. Hence $S_\alpha$ and $S_\beta$ span a 2-dimensional plane so this contradicts the assumption that $L$ is rigid.
\end{proof}

If $I$ and $I'$ are opposite in $\Omega$ and all the lines between them are rigid we will call them \textit{opposite rigid faces}. The \textit{join} $J$ of $I$ and $I'$ is the set of all lines from $I$ to $I'$. The join $J$ is \textit{rigid} if $I$ and $I'$ are opposite rigid faces of $\Omega$.

\begin{lem}
Suppose $I$ and $I'$ are the relative interiors of two proper faces of $\Omega$ which are opposite then the join $J$ of $I$ and $I'$ is a convex subset of $\Omega$.
\end{lem}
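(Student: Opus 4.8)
The plan is to verify convexity directly from the definition of $J$ as the union of the open segments $(\alpha,\beta)$ with $\alpha\in I$ and $\beta\in I'$; each such segment already lies in $\Omega$ by lemma 8.2, so I only need to check that $J$ is closed under chords. Fix $p,q\in J$. Since $p$ and $q$ lie on lines from $I$ to $I'$, I can write $p=(1-s_1)\alpha_1+s_1\beta_1$ and $q=(1-s_2)\alpha_2+s_2\beta_2$ with $\alpha_1,\alpha_2\in I$, $\beta_1,\beta_2\in I'$ and $s_1,s_2\in(0,1)$, and the goal is to show $[p,q]\subset J$.

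The key step is a regrouping of convex combinations. For $t\in(0,1)$ put $r=(1-t)p+tq$ (the endpoints of $[p,q]$ are already $p,q\in J$) and expand $r$ as a combination of the four points $\alpha_1,\alpha_2,\beta_1,\beta_2$. Separating the two $\alpha$-terms from the two $\beta$-terms produces strictly positive weights $a=(1-t)(1-s_1)+t(1-s_2)$ and $b=(1-t)s_1+ts_2$ with $a+b=1$; normalizing gives $\alpha=\big((1-t)(1-s_1)\alpha_1+t(1-s_2)\alpha_2\big)/a$ and $\beta=\big((1-t)s_1\beta_1+ts_2\beta_2\big)/b$. Because $\alpha$ is a convex combination of $\alpha_1,\alpha_2$ and $I$ is convex, $\alpha\in I$, and similarly $\beta\in I'$. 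Then $r=a\alpha+b\beta$ with $a,b>0$, so $r$ lies on the open segment $(\alpha,\beta)$, which is a line from $I$ to $I'$ and therefore contained in $J$. Hence $[p,q]\subset J$ and $J$ is convex.

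This argument rests on two earlier facts rather than on any hard new estimate: that the relative interiors $I$ and $I'$ are convex, which is theorem 3.4 in \cite{Convex} (already used for lemma 8.1), and that the segment $(\alpha,\beta)$ I produce genuinely sits inside $\Omega$, which is precisely lemma 8.2 for opposite faces. I do not anticipate a real obstacle; the only bookkeeping to be careful about is that all four coefficients remain strictly positive for $t,s_1,s_2\in(0,1)$, so that $a,b>0$ and $r$ is an interior point of $(\alpha,\beta)$ rather than an endpoint, and that $\alpha\neq\beta$ because $I$ and $I'$ are relative interiors of distinct faces and hence disjoint. The substance of the proof is just this bilinear regrouping, with convexity of $I$ and $I'$ doing the work.
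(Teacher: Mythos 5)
Your proof is correct and follows essentially the same route as the paper: the paper also reduces to the two segments $[\alpha_1,\alpha_2]\subset I$ and $[\beta_1,\beta_2]\subset I'$ (via its lemma 8.1) and then simply asserts that the join of two segments is a convex set, which is exactly what your bilinear regrouping verifies explicitly. The only difference is that you supply the computation the paper leaves to the reader, together with the (correct) bookkeeping that the weights $a,b$ stay strictly positive so the point lands on the \emph{open} segment $(\alpha,\beta)$.
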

\begin{proof}
Lemma 8.2 implies that $J\subset\Omega$ so all we need to show is that $J$ is convex. Let $x$ and $y$ be two points in $J$. There exists lines $L_x$ and $L_y$ from $I$ to $I'$ in $J$ which contain $x$ and $y$ respectively. Let $\alpha_x, \alpha_y$ be the accumulation points of $L_x$ and $L_y$ in $I$ and $\beta_x,\beta_y$ be the accumulation points of $L_x$ and $L_y$ in $I'$. By lemma 8.1 we know that $[\alpha_x,\alpha_y]$ and $[\beta_x,\beta_y]$ are contained in $I$ and $I'$ respectively. Thus the join of $[\alpha_x,\alpha_y]$ and $[\beta_x,\beta_y]$ is contained in $J$.

We can see that the join of $[\alpha_x,\alpha_y]$ and $[\beta_x,\beta_y]$ is either a 2 or 3 dimensional convex set and therefore $[x,y]$ is in $J$.
\end{proof}

The next lemma will be used to prove that if two rays in $\Omega$ converge to points in $\partial\Omega$ which lie in an open line segment in $\partial\Omega$ then we can find sequences of points on each of the rays whose Hilbert distance remains bounded as they approach $\partial\Omega$.

\begin{lem}
Suppose $r_1$ and $r_2$ are rays in $\Omega$ with the following properties:

\vspace{3mm}
(i) There exists an open line segment S in $\partial\Omega$ such that $a(r_1)$ and $a(r_2)$ lie \indent in $S$.

(ii) $r_1$ and $r_2$ are contained in a 2-dimensional plane.

\vspace{3mm}
\noindent Then there exists sequences of points $\{x_n\}$ on $r_1$ and $\{y_n\}$ on $r_2$ converging in $\mathbb{R}^n$ to $a(r_1)$ and $a(r_2)$ respectively and a constant $C$ such that $d_\Omega(x_n,y_n)\leq C$ for all $n$.
\end{lem}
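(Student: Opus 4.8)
The plan is to pass to the two-dimensional cross section carrying both rays and there realize the desired points as endpoints of chords drawn parallel to the flat piece of $\partial\Omega$ that supports $a(r_1)$ and $a(r_2)$. By hypothesis (ii) the rays lie in a common $2$-plane $P$, so $D=P\cap\Omega$ is a $2$-dimensional cross section; by Lemma 7.2 it is a convex domain whose relative boundary lies in $\partial\Omega$, and by Lemma 7.3 we have $d_D=d_\Omega$ on $D$, so it suffices to bound $d_D(x_n,y_n)$. Write $\alpha_1=a(r_1)$ and $\alpha_2=a(r_2)$, which lie in $\partial D$ by Lemma 7.2, and treat first the principal case $\alpha_1\neq\alpha_2$. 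Because $\alpha_1,\alpha_2\in S$, Lemma 8.1 places them in the relative interior of a common face of $\Omega$; the line through them is the line carrying $S$, and since both points lie in the affine plane $P$ this line, hence all of $S$, lies in $P$ and therefore in $\partial D$. Applying Lemma 8.1 inside $D$ then shows that $\alpha_1$ and $\alpha_2$ lie in the relative interior of a single edge $[E_1,E_2]$ of $\overline{D}$; in particular they are strictly interior to $[E_1,E_2]$.

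Next I would construct the sequences. Choose affine coordinates $(u,v)$ on $P$ so that the supporting line of $\overline{D}$ along $[E_1,E_2]$ is $\{v=0\}$ with $D\subset\{v>0\}$. Along each ray the height $v$ is an affine function decreasing monotonically to $0$ as the ray approaches its accumulation point on $\{v=0\}$, so for every sufficiently small $t>0$ there is a unique point $x(t)\in r_1$ and a unique point $y(t)\in r_2$ at height $v=t$. Fix a sequence $t_n\to 0^+$ and set $x_n=x(t_n)$ and $y_n=y(t_n)$. Then $x_n$ and $y_n$ lie on the same horizontal chord $\{v=t_n\}$, and they converge in $\mathbb{R}^n$ to $\alpha_1$ and $\alpha_2$ respectively, since the unique accumulation point of $r_i$ in $\partial\Omega$ is $\alpha_i$.

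The distance is then estimated by a cross ratio. The horizontal chord through $x_n$ and $y_n$ meets $\partial D$ in two points $a_n$ and $b_n$, and $d_D(x_n,y_n)=\ln CR(a_n,x_n,y_n,b_n)$. By convexity the cross sections $\overline{D}\cap\{v=t\}$ converge in Hausdorff distance to the edge $[E_1,E_2]=\overline{D}\cap\{v=0\}$ as $t\to 0^+$, so $a_n\to E_1$ and $b_n\to E_2$. Consequently $CR(a_n,x_n,y_n,b_n)$ tends to the cross ratio of the four collinear limit points $E_1,\alpha_1,\alpha_2,E_2$, which is finite because $\alpha_1$ and $\alpha_2$ are strictly interior to $[E_1,E_2]$, so no two of the four points coincide and none of the relevant distances vanishes. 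A convergent real sequence is bounded, whence $d_\Omega(x_n,y_n)=d_D(x_n,y_n)\le C$ for a suitable $C$.

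The crux of the argument is this last step, and it is precisely where both hypotheses enter: hypothesis (ii) lets me localize everything inside the planar domain $D$, while the requirement in (i) that $S$ be an \emph{open} segment is what keeps $\alpha_1$ and $\alpha_2$ a definite distance away from the endpoints $E_1,E_2$ as the chords degenerate onto the edge, which is exactly what forces the limiting cross ratio to be finite rather than $0$ or $\infty$. The coincident case $\alpha_1=\alpha_2$ is handled by the same parallel-chord construction: there the four points collapse together, but the self-similar shape of the cross sections near the supporting line keeps $CR(a_n,x_n,y_n,b_n)$ bounded, so the same conclusion holds.
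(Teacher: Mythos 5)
Your proposal is correct and follows essentially the same route as the paper: reduce to the planar cross section via Lemmas 7.2--7.3, take chords through the two rays parallel to $S$, and observe that the cross ratio of the chord's endpoints with $x_n,y_n$ converges to the finite quantity $CR(E_1,a(r_1),a(r_2),E_2)$ because the open segment $S$ keeps $a(r_1),a(r_2)$ strictly inside the maximal boundary edge. (Your appeal to ``self-similar shape'' in the coincident case is unnecessary --- the same limit computation gives cross ratio tending to $1$ --- but this does not affect correctness.)
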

\begin{proof}
Note that condition $(i)$ means $a(r_1)$ and $a(r_2)$ lie in the relative interior of the same face of $\Omega$ by lemma 8.1. Since $r_1$ and $r_1$ are contained in a 2-dimensional plane lemma 7.3 allows us to assume that $\Omega$ lies in $\mathbb{R}^2$, see figure 6.

If $S\subset\partial\Omega$ is the open line segment containing $a(r_1)$ and $a(r_2)$ then we can choose sequences $\{x_n\}$ on $r_1$ and $\{y_n\}$ on $r_2$ converging in $\mathbb{R}^n$ to $a(r_1)$ and $a(r_2)$ respectively so that the straight line between $x_n$ and $y_n$ is parallel to $S$. Let $[\alpha,\beta]$ be the maximal line segment in $\partial\Omega$ containing $S$ and $\alpha_n$ and $\beta_n$ be the points where the line between $x_n$ and $y_n$ intersects $\partial\Omega$. Then $\alpha_n$ converges to $\alpha$ and $\beta_n$ converges to $\beta$. If follows that
\[\lim_{n\rightarrow\infty}d_\Omega(x_n,y_n)=ln(CR(\alpha,a(r_1),a(r_2),\beta))\]

\end{proof}

\vspace{5mm}
\begin{figure}[h]
    \centering
    \def\svgwidth{\columnwidth}
    \begin{overpic}[]{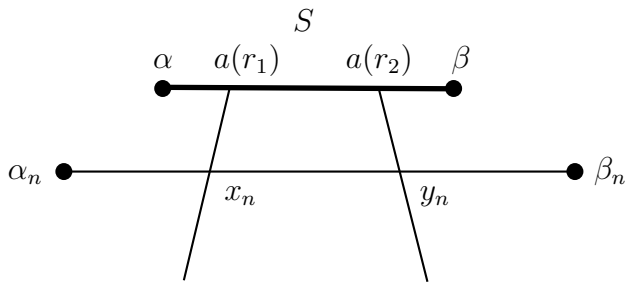}
        \put (45,48) {$S$}
        \put (18.5,41) {$\alpha$}
        \put (75,41) {$\beta$}
        \put (30,41) {$a(r_1)$}
        \put (55,41) {$a(r_2)$}
        \put (-9,20) {$\alpha_n$}
        \put (102,20) {$\beta_n$}
        \put (32,16) {$x_n$}
        \put (69,16) {$y_n$}

    \end{overpic}
    \vspace{3mm}
    \caption{$d_\Omega(x_n,y_n)$ remains bounded as $x_n$ and $y_n$ approach $\partial\Omega$.}
\end{figure}

The next lemma we will need tells us that if two rays in $\Omega$ converge to the same point in the boundary of $\Omega$ then we can find sequences of points on each of the rays whose Hilbert distance remains bounded as they approach $\partial\Omega$.

\begin{lem}
Suppose $r_1$ and $r_2$ are rays in $\Omega$ with $\alpha=a(r_1)=a(r_2)$. Then there exists sequences of points $\{x_n\}$ on $r_1$ and $\{y_n\}$ on $r_2$ converging in $\mathbb{R}^n$ to $\alpha$ and a constant $C$ such that $d_\Omega(x_n,y_n)\leq C$ for all $n$.
\end{lem}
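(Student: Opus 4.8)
The plan is to exploit the scale invariance of the Hilbert metric together with the fact that $\Omega$ is star-shaped with respect to any point of $\overline{\Omega}$. After translating so that $\alpha=a(r_1)=a(r_2)$ is the origin, I would first record the elementary observation that, since $\Omega$ is convex and $\alpha\in\overline{\Omega}$, for every $x\in\Omega$ and every $\lambda\in(0,1]$ the point $\lambda x=\lambda x+(1-\lambda)\alpha$ is a convex combination of an interior point and a boundary point and hence again lies in $\Omega$. Thus $\Omega$ is star-shaped about $\alpha$. Since each $r_i$ is the straight segment running from its initial point to its accumulation point $\alpha$, the whole line carrying $r_i$ passes through the origin; choosing fixed points $d_1\in r_1$ and $d_2\in r_2$, the scaled points $td_1$ and $td_2$ lie on $r_1$ and $r_2$ for all $t\in(0,1]$ and converge to $\alpha$ as $t\downarrow 0$.

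The heart of the argument is to show that $t\mapsto d_\Omega(td_1,td_2)$ is nonincreasing as $t\downarrow 0$. A dilation $z\mapsto t^{-1}z$ is a linear, hence projective, transformation, and projective maps preserve cross ratios, so it is a Hilbert isometry from $\Omega$ onto $t^{-1}\Omega$. Consequently $d_\Omega(td_1,td_2)=d_{t^{-1}\Omega}(d_1,d_2)$, which moves the two marked points to the fixed pair $d_1,d_2$ while enlarging the domain. Star-shapedness gives the nesting $t^{-1}\Omega\supseteq s^{-1}\Omega$ whenever $0<t\le s$ (if $sz\in\Omega$ then $tz=(t/s)(sz)\in\Omega$), and the Hilbert metric is monotone decreasing under inclusion of convex domains: on the line through $d_1$ and $d_2$ the boundary points of the larger domain lie farther from the segment $[d_1,d_2]$, so the defining cross ratio, and hence the distance, can only decrease. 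Combining these two facts yields, for every $t\in(0,1]$,
\[ d_\Omega(td_1,td_2)=d_{t^{-1}\Omega}(d_1,d_2)\le d_\Omega(d_1,d_2)=:C. \]

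To finish, I would take any sequence $t_n\downarrow 0$ in $(0,1]$ and set $x_n=t_nd_1\in r_1$ and $y_n=t_nd_2\in r_2$. These converge in $\mathbb{R}^n$ to $\alpha$, and the displayed inequality gives $d_\Omega(x_n,y_n)\le C$ for all $n$, as required. The only degenerate situation is $r_1=r_2$, where one may simply take $x_n=y_n$.

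I expect the main obstacle to be isolating the right mechanism rather than any hard estimate. The naive approach of computing the cross ratio $CR(\alpha_n,x_n,y_n,\beta_n)$ directly forces a case split according to whether $\alpha$ is an extreme point of $\Omega$ or lies in the relative interior of a positive-dimensional face --- geometrically, whether the tangent cone of $\Omega$ at $\alpha$ is a proper cone or a half-space --- and in the latter case the boundary intersection points $\alpha_n,\beta_n$ do not even converge to $\alpha$. The scaling-plus-monotonicity argument sidesteps this entirely and works verbatim in every dimension, with no appeal to the cross-section lemma 7.3 that was needed in the preceding lemma. The one point requiring care is the monotonicity of the Hilbert metric under domain inclusion; although standard, it is not recorded earlier in the text, so I would include the one-line cross-ratio justification indicated above.
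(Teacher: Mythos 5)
Your argument is correct, and it takes a genuinely different route from the paper. The paper first intersects with the plane spanned by $r_1$ and $r_2$ to reduce to dimension $2$, then splits into cases: if $\alpha$ lies in the relative interior of a $1$-dimensional face it invokes the preceding lemma (Lemma 8.5), and if $\alpha$ is an extreme point it constructs an auxiliary subdomain $\Omega'=\Omega\cap H_1\cap H_2$ bounded by two lines through $\alpha$, chooses the sequences so that the chords $[x_n,y_n]$ always terminate on those two lines, applies the Cross Ratio Theorem to see that $d_{\Omega'}(x_n,y_n)$ is literally constant, and finishes with the comparison $d_\Omega\le d_{\Omega'}$. Your dilation argument replaces all of this with a single mechanism: the map $z\mapsto t^{-1}z$ is a projective Hilbert isometry onto $t^{-1}\Omega$, star-shapedness about $\alpha$ gives the nesting $s^{-1}\Omega\subseteq t^{-1}\Omega$ for $t\le s$, and monotonicity of the metric under inclusion yields $d_\Omega(td_1,td_2)\le d_\Omega(d_1,d_2)$ for all $t\in(0,1]$. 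This avoids the case split, the $2$-dimensional reduction, and the appeal to Lemma 8.5 entirely, and it in fact proves the slightly stronger statement that $t\mapsto d_\Omega(td_1,td_2)$ is nonincreasing. Both proofs ultimately lean on the same comparison principle for nested convex domains (the paper cites it from the literature in its extreme-point case), so you are right that this is the one external fact to record; your cross-ratio justification of it is the standard one. One small point of care you already handle implicitly: the scaled points $td_i$ do lie on $r_i$ because, after translating $\alpha$ to the origin, $r_i$ is exactly $\{tp_i: t\in(0,1]\}$ for its initial point $p_i$, so the segment from $d_i$ toward the origin stays inside the ray.
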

\begin{proof}

By lemma 7.3 we can reduce to the case where $\Omega$ is a subset of $\mathbb{R}^2$ by intersecting $\Omega$ with the two dimensional plane containing $r_1$ and $r_2$. If $\alpha$ lies in the relative interior of a 1-dimensional face of $\Omega$ then we are done by lemma 8.5.

Otherwise $\alpha$ is an extreme point of $\Omega$. We can find a line $L_1$ with accumulation point $\alpha$ in $\Omega$ such that $r_1$ and $r_2$ are contained in the same open half space $H_1$ determined by $L_1$. And we can find a second line $L_2$ with accumulation point $\alpha$ in $\Omega\cap H_1$ such that $r_1, r_2$ and $L_1$ are contained in the same open half space $H_2$ determined by $L_2$, see figure 7.

Since the intersection of convex sets is convex $\Omega'=\Omega\cap H_1\cap H_2$ is a convex domain containing $r_1$ and $r_2$ which is contained in $\Omega$. Furthermore $\alpha$ is an extreme point of $\Omega'$ and $L_1$ and $L_2$ are contained in $\partial\Omega'$. Since $\Omega'\subset\Omega$ is follow from theorem 2.2 in ~\cite{HilbertKlein} that $d_\Omega\leq d_{\Omega'}$ on $\Omega'$.

Suppose $\{x_n\}$ is a sequence on $r_1$ converging to $\alpha$, then we can choose a sequence $\{y_n\}$ on $r_2$ converging to $\alpha$ such that the points where the line $L_n$ between $x_n$ and $y_n$ intersect $\partial\Omega'$ always lie on $L_1$ and $L_2$. Let $\alpha_n$ and $\beta_n$ be the points of $L_n$ on $L_1$ and $L_2$ respectively, see figure 7. By the cross ratio theorem $CR(\alpha_n,x_n,y_n,\beta_n)$ is constant for all $n$. Hence,
\[d_\Omega(x_n,y_n)\leq d_{\Omega'}(x_n,y_n)=C\]

\noindent for all $n$.

\end{proof}

\begin{figure}[h]
    \centering
    \def\svgwidth{\columnwidth}
    \begin{overpic}[]{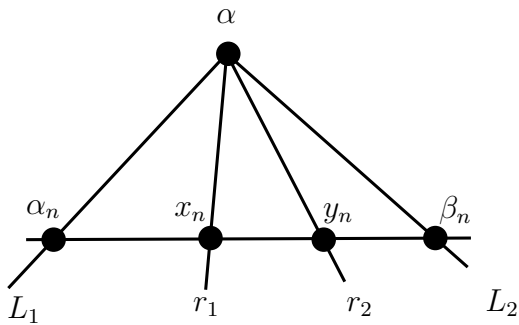}
        \put (45,58) {$\alpha$}
        \put (0,-5.5)  {$L_1$}
        \put (40,-4) {$r_1$}
        \put (73,-4) {$r_2$}
        \put (103,-4) {$L_2$}
        \put (4,17) {$\alpha_n$}
        \put (93,16) {$\beta_n$}
        \put (36,16.5) {$x_n$}
        \put (68,16.25) {$y_n$}
    \end{overpic}
    \vspace{3mm}
    \caption{$d_\Omega(x_n,y_n)$ remains bounded as $x_n$ and $y_n$ approach $\partial\Omega$.}
\end{figure}

\begin{cor}
Suppose $r_1$ and $r_2$ are rays in $\Omega$ such that $a(r_1)$ and $a(r_2)$ are contained in the relative interior $I$ of the same proper face of $\Omega$. Then there exists sequences of points $\{x_n\}$ on $r_1$ and $\{y_n\}$ on $r_2$ converging in $\mathbb{R}^n$ to $a(r_1)$ and $a(r_2)$ respectively and a constant $C$ such that $d_\Omega(x_n,y_n)\leq C$ for all $n$.
\end{cor}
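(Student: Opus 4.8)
The plan is to reduce the statement to the two preceding results, Lemma 8.5 and Lemma 8.6, by threading an auxiliary ray between $r_1$ and $r_2$. The new difficulty relative to Lemma 8.5 is that we are given no control over the relative position of the two rays; in particular they need not be coplanar, so hypothesis (ii) of Lemma 8.5 can fail outright. I would first dispose of the degenerate case: if $a(r_1)=a(r_2)$, then Lemma 8.6 applies directly and produces the desired sequences. So from now on assume $a(r_1)\neq a(r_2)$. Since these are then distinct points lying in the relative interior $I$ of the same proper face, Lemma 8.1 supplies an open line segment $S\subset\partial\Omega$ containing both.

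Next I would build the bridge. Let $p_1$ be the starting point of $r_1$, and let $r_3$ be the ray from $p_1$ toward $a(r_2)$. Because $p_1\in\Omega$ and $a(r_2)\in\partial\Omega$, convexity of $\Omega$ forces the open segment into the interior, so $r_3=[p_1,a(r_2))\subset\Omega$ is a ray in $\Omega$ with $a(r_3)=a(r_2)$. Now $r_1$ and $r_3$ emanate from the common point $p_1$, so the three points $p_1$, $a(r_1)$, $a(r_2)$ span an affine subspace of dimension at most $2$ and hence $r_1$ and $r_3$ are coplanar; moreover their accumulation points $a(r_1)$ and $a(r_3)=a(r_2)$ both lie in $S$. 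Thus both hypotheses of Lemma 8.5 hold for the pair $r_1,r_3$, which yields sequences $\{x_n\}$ on $r_1$ and $\{z_n\}$ on $r_3$ converging to $a(r_1)$ and $a(r_2)$ respectively with $d_\Omega(x_n,z_n)\leq C_1$ for all $n$.

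Finally I would connect $r_3$ to $r_2$. These two rays share the accumulation point $a(r_2)$, so Lemma 8.6 applies to them. The step that must be handled with care, and which I expect to be the crux, is the matching of the two sequences supplied on $r_3$: one must feed the very sequence $\{z_n\}$ already produced by Lemma 8.5 into Lemma 8.6. This is legitimate precisely because the proof of Lemma 8.6 begins from an \emph{arbitrary} sequence on one ray and constructs a companion on the other; applying it with input $\{z_n\}$ on $r_3$ gives a sequence $\{y_n\}$ on $r_2$ converging to $a(r_2)$ with $d_\Omega(z_n,y_n)\leq C_2$ for all $n$. The triangle inequality then delivers $d_\Omega(x_n,y_n)\leq d_\Omega(x_n,z_n)+d_\Omega(z_n,y_n)\leq C_1+C_2$, which is the assertion with $C=C_1+C_2$.
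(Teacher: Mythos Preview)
Your argument is correct and follows essentially the same route as the paper: introduce an auxiliary ray coplanar with $r_1$ and accumulating at $a(r_2)$, apply Lemma~8.5 to this pair, then use Lemma~8.6 (feeding in the already-constructed sequence $\{z_n\}$) to bridge over to $r_2$, and finish with the triangle inequality. The only cosmetic difference is that the paper anchors its auxiliary ray at the far boundary endpoint $\beta$ of the line through $r_1$ (invoking Lemma~8.2 to place it inside $\Omega$), whereas you anchor yours at the starting point $p_1$ of $r_1$, which is a slightly more direct choice.
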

\begin{proof}
If $a(r_1)=a(r_2)$ then we are done by lemma 8.6, so suppose $a(r_1)\neq a(r_2)$. Lemma 8.1 implies there exists an open segment $S\subset\partial\Omega$ containing $a(r_1)$ and $a(r_2)$.

Let $L_1$ be the line in $\Omega$ containing $r_1$ and define $\beta$ to be the second accumulation point of $L_1$ in $\partial\Omega$. By lemma 8.2 the line $L'$ from $\beta$ to $a(r_2)$ is contained in $\Omega$ and so we can find a ray $r'$ on $L'$ with $a(r')=a(r_2)$. By lemma 8.5 there exists sequences of points $\{x_n\}$ on $r_1$ and $\{z_n\}$ on $r'$ converging to $a(r_1)$ and $a(r_2)$ respectively and a constant $C$ such that $d_\Omega(x_n,z_n)\leq C$ for all $n$.

By using the idea in the proof of lemma 8.5 there exists a sequence of points $\{y_n\}$ on $r_2$ converging to $a(r_2)$ and a constant $C'$ such that $d_\Omega(y_n,z_n)\leq C'$ for all $n$. Thus $d_\Omega(x_n,y_n)\leq C+C'$ for all $n$.
\end{proof}

If two rays in $\Omega$ converge to points in $\partial\Omega$ which are not contained in a common face of $\Omega$ then the limit of any two sequences of points on these rays converging to the boundary points will be infinite, see theorem 5.2 of ~\cite{GromovHyperbolic}. The following lemma tells us the same is true if one ray converges to an extreme point of $\Omega$ and the other converges to any other point in $\partial\Omega$. This lemma will help us understand how families of lines converging to the same extreme point of $\Omega$ are mapped by a Hilbert isometry.

\begin{lem}
Suppose $r_1$ and $r_2$ are rays in $\Omega$ with $a(r_1)\neq a(r_2)$. Furthermore let $a(r_1)$ be an extreme point of $\Omega$. If $\{x_n\}$ is any sequence of points on $r_1$ converging to $a(r_1)$ and $\{y_n\}$ is any sequence of points on $r_2$ converging to $a(r_2)$, then
\[\lim_{n\rightarrow\infty}d_\Omega(x_n,y_n)=\infty\]
\end{lem}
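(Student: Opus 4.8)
The plan is to write the Hilbert distance as the logarithm of a cross ratio and show the cross ratio blows up. Set $\alpha = a(r_1)$ and $\beta = a(r_2)$, so $\alpha$ is an extreme point of $\Omega$ and $\alpha \neq \beta$. For each $n$ let $p_n$ and $q_n$ be the two points where the straight line through $x_n$ and $y_n$ meets $\partial\Omega$, labelled so that the four points occur in the order $p_n, x_n, y_n, q_n$ along the line. Then $d_\Omega(x_n,y_n) = \ln CR(p_n,x_n,y_n,q_n)$, and using $|p_n-y_n| = |p_n-x_n| + |x_n-y_n|$ and $|q_n-x_n| = |q_n-y_n| + |x_n-y_n|$ I would rewrite the cross ratio as
\[ CR(p_n,x_n,y_n,q_n) = \left(1 + \frac{|x_n-y_n|}{|p_n-x_n|}\right)\left(1 + \frac{|x_n-y_n|}{|q_n-y_n|}\right). \]
Since both factors are at least $1$, it suffices to make the first factor tend to infinity. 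As $x_n \to \alpha$ and $y_n \to \beta$ with $\alpha \neq \beta$, the numerator $|x_n-y_n| \to |\alpha-\beta| > 0$ is bounded below, so the whole problem reduces to showing $|p_n - x_n| \to 0$, equivalently $p_n \to \alpha$.

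The crux is proving $p_n \to \alpha$, and this is exactly where extremity of $\alpha$ enters; I expect it to be the main obstacle. I would argue by compactness: since $\partial\Omega$ is compact it is enough to show every convergent subsequence of $\{p_n\}$ has limit $\alpha$. Suppose $p_{n_k} \to p^*$. Because $x_n \to \alpha$ and $y_n \to \beta$ with $\alpha \neq \beta$, the lines through $x_n$ and $y_n$ converge in base point and direction to the line through $\alpha$ and $\beta$, so $p^*$ lies on that line and $p^* \in \partial\Omega \subset \overline{\Omega}$. The betweenness relation $x_{n_k} \in [p_{n_k}, y_{n_k}]$ passes to the limit to give $\alpha \in [p^*, \beta]$. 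If $p^* = \beta$ this would force $\alpha = \beta$, a contradiction, so $[p^*,\beta]$ is a nondegenerate segment of $\overline{\Omega}$ containing $\alpha$. Since $\alpha$ is an extreme point it cannot lie in the interior of this segment, hence $\alpha$ is an endpoint, and as $\alpha \neq \beta$ we get $p^* = \alpha$. Therefore $p_n \to \alpha$.

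Combining the two steps, $|p_n - x_n| \to 0$ while $|x_n - y_n|$ stays bounded away from $0$, so the first factor above tends to infinity and hence $CR(p_n,x_n,y_n,q_n) \to \infty$. Taking logarithms yields $d_\Omega(x_n,y_n) \to \infty$, as required. I would note that this argument uses nothing about the far-side point $q_n$, which is convenient since the limiting behavior of $q_n$ depends on the finer structure of $\partial\Omega$ near $\beta$; isolating the near-side estimate is what makes the extremity hypothesis do all the work.
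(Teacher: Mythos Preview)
Your proof is correct and follows essentially the same approach as the paper: both introduce the boundary intersection points of the line through $x_n$ and $y_n$, use extremity of $a(r_1)$ to force the near-side boundary point to converge to $a(r_1)$, and conclude that the cross ratio blows up. Your argument is considerably more detailed than the paper's, which asserts the key convergence step in a single sentence; in particular, your compactness-plus-betweenness justification for $p_n\to\alpha$ spells out precisely what the paper leaves implicit.
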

\begin{proof}
Let $\{x_n\}$ be a sequence of points on $r_1$ converging to $a(r_1)$ and $\{y_n\}$ be a sequence of points on $r_2$ converging to $a(r_2)$. We can assume that $x_n$ and $y_n$ are distinct so we can define $L_n$ to be the straight line which contains $x_n$ and $y_n$. $L_n$ intersects $\partial\Omega$ in two points $\alpha_n$ and $\beta_n$ where $|\alpha_n-x_n|<|\alpha_n-y_n|$ and $|\beta_n-y_n|<|\beta_n-x_n|$. Since $a(r_1)$ is an extreme point of $\Omega$ it follows that $\alpha_n$ converges to $a(r_1)$ or $\beta_n$ converges to $a(r_2)$. The definition of the Hilbert metric implies that
\[\lim_{n\rightarrow\infty}d_\Omega(x_n,y_n)=\infty\]

\end{proof}

We will now give several lemmas that describe how isometries affect the accumulation points of rays in $\Omega$. The case we are most concerned with is when we have two rays whose accumulation point in $\partial\Omega$ is the same extreme point of $\Omega$. We want to understand when the images of these two rays under an isometry still have the same accumulation point in the boundary of $\Omega'$.

The proof of lemma 8.9 will use the Gromov product which we now define. Fix a base point $b$ in $\Omega$ and let $x$ and $y$ be in $\Omega$. Then the Gromov product of $x$ and $y$ is given by
\[ (x,y)_b=\frac{1}{2}(d_{\Omega}(x,b)+d_\Omega(y,b)-d_\Omega(x,y)).\]

\begin{lem}
Suppose $r_1$ and $r_2$ are rigid rays in $\Omega$ and $a(r_1)$ and $a(r_2)$ are contained in the relative interior $I$ of the same face of $\Omega$. If $\tau$ is an isometry of $\Omega$ then $[a(\tau(r_1)),a(\tau(r_2))]\subset\partial\Omega$.
\end{lem}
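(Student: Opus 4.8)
The plan is to reduce everything to the bounded-distance criterion for lying in a common face. First I would note that since $r_1$ and $r_2$ are rigid rays, Lemma 3.6 guarantees that $\tau(r_1)$ and $\tau(r_2)$ are again rigid rays in $\Omega'$, so that the accumulation points $a(\tau(r_1))$ and $a(\tau(r_2))$ are well defined. If these two points coincide the segment degenerates to a single point of $\partial\Omega'$ and there is nothing to prove, so I would assume $a(\tau(r_1))\neq a(\tau(r_2))$ and aim to place both in the relative interior of a single face of $\Omega'$; by Lemma 8.1 this is exactly what is needed to conclude $[a(\tau(r_1)),a(\tau(r_2))]\subset\partial\Omega'$.

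The heart of the argument is to produce sequences along the image rays that stay a bounded Hilbert distance apart while escaping to the boundary. Since $a(r_1)$ and $a(r_2)$ lie in the relative interior $I$ of a common face of $\Omega$, Corollary 8.7 supplies sequences $\{x_n\}$ on $r_1$ and $\{y_n\}$ on $r_2$ converging in $\mathbb{R}^n$ to $a(r_1)$ and $a(r_2)$ respectively, together with a constant $C$ with $d_\Omega(x_n,y_n)\leq C$ for all $n$. Because $\tau$ is an isometry, $d_{\Omega'}(\tau(x_n),\tau(y_n))=d_\Omega(x_n,y_n)\leq C$ for all $n$. Moreover, as $x_n\to a(r_1)$ the Hilbert distance from a fixed base point blows up, hence so does the distance from the image of that base point to $\tau(x_n)$; since $\tau(r_1)$ is a geodesic ray with a unique accumulation point in $\partial\Omega'$, this forces $\tau(x_n)\to a(\tau(r_1))$, and likewise $\tau(y_n)\to a(\tau(r_2))$.

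With these two facts in hand I would argue by contradiction. If $a(\tau(r_1))$ and $a(\tau(r_2))$ were not contained in the relative interior of a common face of $\Omega'$, then the cited separation result (theorem 5.2 of \cite{GromovHyperbolic}) applied to the rays $\tau(r_1)$ and $\tau(r_2)$ in $\Omega'$ would give $d_{\Omega'}(\tau(x_n),\tau(y_n))\to\infty$, contradicting the uniform bound $C$ established above. Hence $a(\tau(r_1))$ and $a(\tau(r_2))$ lie in the relative interior of a single face of $\Omega'$, and Lemma 8.1 produces an open segment $S\subset\partial\Omega'$ containing both, whence $[a(\tau(r_1)),a(\tau(r_2))]\subset S\subset\partial\Omega'$.

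I expect the main obstacle to be the convergence claim in the middle paragraph, namely that the image sequences actually accumulate at the endpoints $a(\tau(r_1))$ and $a(\tau(r_2))$ of the image rays rather than somewhere else along them. This rests on the fact that a rigid ray is a complete geodesic ray with a single boundary accumulation point and that points receding to infinite Hilbert distance along such a ray must converge to that point; the rigidity hypothesis is precisely what lets Lemma 3.6 deliver straight image rays for which this endpoint is defined. Once this convergence is secured, the remaining steps are a direct application of Corollary 8.7, the cited separation theorem, and Lemma 8.1.
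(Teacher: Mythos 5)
Your overall strategy is the same as the paper's: use Corollary 8.7 to produce sequences $\{x_n\}$ on $r_1$ and $\{y_n\}$ on $r_2$ with $d_\Omega(x_n,y_n)\leq C$, push them through the isometry, and invoke Theorem 5.2 of the Karlsson--Noskov reference to rule out the segment leaving the boundary. The convergence point you worry about in your last paragraph is handled correctly: $d_{\Omega'}(\tau(x_n),\tau(b))=d_\Omega(x_n,b)\to\infty$ forces $\tau(x_n)$ to escape along the ray $\tau(r_1)$ and hence to converge to $a(\tau(r_1))$.

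There is, however, a genuine error in how you state and apply the separation theorem. The dichotomy in Theorem 5.2 of that reference is governed by whether the \emph{segment} $[a(\tau(r_1)),a(\tau(r_2))]$ is contained in $\partial\Omega'$ (equivalently, whether the two points lie in a common proper \emph{face}), not by whether they lie in the \emph{relative interior} of a common face. These conditions are genuinely different: a vertex and a nearby point in the relative interior of an adjacent edge lie in a common face, and the segment between them lies in the boundary, yet they are not in the relative interior of any common face. Your strengthened version would make Corollary 8.10 an immediate consequence of Corollary 8.7, whereas the paper needs a substantially longer argument (an auxiliary ray $r_3$, Lemma 8.8, and this very lemma) to prove that corollary --- a strong sign that the version you quoted is not available. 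The repair is easy and in fact shortens your proof: apply the correct form of the theorem directly. If $[a(\tau(r_1)),a(\tau(r_2))]\not\subset\partial\Omega'$, then the Gromov products $(\tau(x_n)\,|\,\tau(y_n))_{\tau(b)}$ stay bounded; but
\[
(x_n,y_n)_b=\tfrac12\bigl(d_\Omega(x_n,b)+d_\Omega(y_n,b)-d_\Omega(x_n,y_n)\bigr)\to\infty
\]
since the first two terms blow up and the third is at most $C$, and isometries preserve the Gromov product. This contradiction yields $[a(\tau(r_1)),a(\tau(r_2))]\subset\partial\Omega'$ directly, with no detour through relative interiors or Lemma 8.1; that is exactly the paper's argument. (A minor point: the lemma asserting that rigid rays map to rigid rays is Lemma 3.5, not 3.6.)
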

\begin{proof}
Corollary 8.7 implies there exists sequences of points $\{x_n\}$ on $r_1$ and $\{y_n\}$ on $r_2$ converging to $a(r_1)$ and $a(r_2)$ respectively and a constant $C$ such that $d_\Omega(x_n,y_n)\leq C$ for all $n$. From the definition of the Gromov product we can see that
\[(x_n,y_n)_b\rightarrow\infty\]

\noindent as $n\rightarrow\infty$. If $[a(\tau(r_1)),a(\tau(r_2))]\not\subset\partial\Omega$ then theorem 5.2 of ~\cite{GromovHyperbolic} implies
\[\limsup_{n\rightarrow\infty}(\tau(x_n) | \tau(y_n))_{\tau(b)}\leq K \]

\noindent for some constant $K$. This contradicts the fact that $\tau$ is an isometry and so the conclusion follows.
\end{proof}

\begin{cor}
Suppose $r_1$ and $r_2$ are rigid rays in $\Omega$ and $a(r_1)$ and $a(r_2)$ are contained in the relative interior $I$ of the same face of $\Omega$. If $\tau:\Omega\rightarrow\Omega'$ is an isometry then $a(\tau(r_1))$ and $a(\tau(r_2))$ lie in the relative interior of the same face of $\Omega'$.
\end{cor}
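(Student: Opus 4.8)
The plan is to combine the bounded-distance sequences produced by Corollary 8.7 with a blow-up estimate that forces the two image accumulation points to share a face. Write $\gamma_1 := a(\tau(r_1))$ and $\gamma_2 := a(\tau(r_2))$. If $\gamma_1 = \gamma_2$ there is nothing to prove, so assume they are distinct. By Corollary 8.7 there are sequences $\{x_n\}$ on $r_1$ and $\{y_n\}$ on $r_2$ converging to $a(r_1)$ and $a(r_2)$ with $d_\Omega(x_n,y_n)\le C$. Since $r_1,r_2$ are rigid, $\tau(r_1),\tau(r_2)$ are rigid rays in $\Omega'$ by Lemma 3.5, and $\tau(x_n),\tau(y_n)$ lie on them. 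Because $x_n,y_n$ leave every compact subset of $\Omega$ and $\tau$ is an isometry, $\tau(x_n)$ and $\tau(y_n)$ leave every compact subset of $\Omega'$ along these rays, hence converge to their unique accumulation points $\gamma_1$ and $\gamma_2$; moreover $d_{\Omega'}(\tau(x_n),\tau(y_n)) = d_\Omega(x_n,y_n)\le C$. Thus I have bounded-distance sequences converging to $\gamma_1$ and $\gamma_2$.

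Next I would invoke Lemma 8.9, whose proof is insensitive to the codomain, to record that $[\gamma_1,\gamma_2]\subset\partial\Omega'$; applying Lemma 8.1 to two interior points of this segment shows that the open segment $(\gamma_1,\gamma_2)$ lies in the relative interior $I'$ of a single face $F'$ of $\Omega'$. It then remains only to prove that the \emph{endpoints} $\gamma_1,\gamma_2$ lie in $I'$ as well, for then Lemma 8.1 gives the conclusion.

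The main step is a blow-up estimate generalizing Lemma 8.8 from extreme points to arbitrary faces. Suppose for contradiction that $\gamma_1\notin I'$, so $\gamma_1$ lies on the relative boundary of $F'$. I claim $d_{\Omega'}(\tau(x_n),\tau(y_n))\to\infty$, contradicting the bound $C$. Mimicking the proof of Lemma 8.8, let $\ell_n$ be the line through $\tau(x_n),\tau(y_n)$ and let $\alpha_n,\beta_n$ be its intersections with $\partial\Omega'$, with $\alpha_n$ on the $\gamma_1$ side. The chord direction converges to $\gamma_1-\gamma_2$, which lies in $\mathrm{aff}(F')$ and points from the relative interior point $\gamma_2$ out through the relative boundary point $\gamma_1$. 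Using the standard face identity $F'=\overline{\Omega'}\cap\mathrm{aff}(F')$, every point just beyond $\gamma_1$ in this direction lies outside $\overline{\Omega'}$, so by continuity of the boundary intersection the exit point $\alpha_n\to\gamma_1$. Since also $\tau(x_n)\to\gamma_1$, the ratio $|\alpha_n-\tau(y_n)|/|\alpha_n-\tau(x_n)|\to\infty$, so the cross ratio defining $d_{\Omega'}(\tau(x_n),\tau(y_n))$ diverges. This contradiction forces $\gamma_1\in I'$, and the symmetric argument gives $\gamma_2\in I'$.

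I expect the exit-point control $\alpha_n\to\gamma_1$ to be the main obstacle. Unlike the extreme-point case of Lemma 8.8, the boundary near $\gamma_1$ now contains the flat piece given by the smallest face at $\gamma_1$, and one must rule out $\alpha_n$ drifting along that face away from $\gamma_1$; the identity $F'=\overline{\Omega'}\cap\mathrm{aff}(F')$ together with a supporting hyperplane of $F'$ at $\gamma_1$ is what pins $\alpha_n$ down, with extra care required for non-exposed faces. An alternative route — extending $r_1,r_2$ within $I$ to rigid rays $\rho_1,\rho_2$ (rigid by Lemma 8.3, since the line carrying a rigid ray is itself rigid) and assembling a larger boundary segment through repeated use of Lemma 8.9 — runs into the difficulty of showing that the image points $a(\tau(\rho_1)),\gamma_1,\gamma_2,a(\tau(\rho_2))$ are collinear, which is precisely why I prefer the distance blow-up argument above.
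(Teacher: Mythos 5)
Your proof is essentially correct, but it takes a genuinely different route from the paper. The paper never generalizes the blow-up estimate of Lemma 8.8 beyond extreme points; instead it reduces everything to dimension two. It takes the plane $P'$ through $a(\tau(r_1))$ and $\tau(r_2)$, forms the cross section $D'=P'\cap\Omega'$, introduces an auxiliary ray $r_3$ in $D'$ from the basepoint of $\tau(r_2)$ to $a(\tau(r_1))$, and uses Corollary 8.7 plus the triangle inequality to get two bounded-distance sequences living inside $D'$. Then Lemma 8.8 (applied in $D'$, where ``not an extreme point'' is exactly ``interior to a boundary segment'') together with Lemma 8.9 forces the two accumulation points into the relative interior of a common one-dimensional face of $D'$, and Lemma 8.1 lifts this back to $\Omega'$. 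Your argument instead works directly in $\Omega'$: you get $[\gamma_1,\gamma_2]\subset\partial\Omega'$, locate the open segment in the relative interior $I'$ of a face $F'$ via Lemma 8.1, and then prove a face-level divergence estimate to push the endpoints into $I'$. What your approach buys is a genuine strengthening of Lemma 8.8 (blow-up when one accumulation point sits on the relative boundary of the face containing the other) at the cost of handling higher-dimensional faces; what the paper's approach buys is that it only ever needs the extreme-point case of the blow-up, at the cost of the slightly fiddly auxiliary ray $r_3$ and the passage between $D'$ and $\Omega'$.

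One step in your sketch needs to be phrased more carefully: the exit point of a chord from a convex body does \emph{not} depend continuously on the chord in general, so ``by continuity of the boundary intersection'' is not by itself a proof that $\alpha_n\to\gamma_1$. The correct justification is by accumulation points: $\overline{\Omega'}$ is compact, so pass to a convergent subsequence $\alpha_{n_k}\to\alpha$; then $\alpha\in\partial\Omega'$, and since $\alpha_n$ lies on the line $\ell_n$ through $\tau(x_n)$ and $\tau(y_n)$ and these lines converge to the line through $\gamma_1$ and $\gamma_2$, the limit $\alpha$ lies on that line with $\gamma_1\in[\alpha,\gamma_2]$. Your observation that $F'=\overline{\Omega'}\cap\mathrm{aff}(F')$ (which does hold for every face, exposed or not) shows every point strictly beyond $\gamma_1$ in the direction $\gamma_1-\gamma_2$ lies outside $\overline{\Omega'}$, so $\alpha=\gamma_1$, and since every subsequential limit equals $\gamma_1$ the whole sequence converges. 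With that substitution your ``drifting along the face'' worry disappears, because any drift would take $\alpha_n$ off the limiting line. The remaining cross-ratio computation is fine: the factor $|\alpha_n-\tau(y_n)|/|\alpha_n-\tau(x_n)|$ tends to infinity while the other factor is bounded below by $1$, so $d_{\Omega'}(\tau(x_n),\tau(y_n))\to\infty$, contradicting the bound from Corollary 8.7.
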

\begin{proof}
If $a(\tau(r_1))=a(\tau(r_2))$ then the result is trivial so will assume these are two distinct points.

Using corollary 8.7 we can find a sequence $\{x_n\}$ on $r_1$ converging to $a(r_1)$ and a sequence $\{y_n\}$ on $r_2$ converging to $a(r_2)$ and a constant $C$ such that
\[ d_{\Omega'}(\tau(x_n),\tau(y_n))=d_\Omega(x_n,y_n)\leq C \]
\noindent for all $n$.

Let $P'$ be a two dimensional plane containing $a(\tau(r_1))$ and $\tau(r_2)$, then $D'=P'\cap\Omega'$ is a two dimensional cross section of $\Omega'$ whose Hilbert metric agrees with the metric inherited from $d_{\Omega'}$, see lemma 7.3. Let $r_3$ be the ray in $D'$ which begins at the same point as $\tau(r_2)$ and converges to $a(\tau(r_1))$, see figure 8. From corollary 8.7 we can find a third sequence of points $\{z_n\}$ on $r_3$ converging to $a(\tau(r_1))$ and a constant $C'$ such that
\[ d_{\Omega'}(\tau(x_n),z_n)\leq C' \]

\noindent for all $n$. It follows from the triangle inequality that
\[ d_{D'}(\tau(y_n),z_n)=d_{\Omega'}(\tau(y_n),z_n)\leq C + C' \]

\noindent for all $n$. Because of the previous inequality lemma 8.8 implies that $a(\tau(r_1))$ and $a(\tau(r_2))$ are not extreme points of $D'$. But lemma 8.9 implies that $[a(\tau(r_1)),a(\tau(r_2))]$ is a subset of the relative boundary of $D'$ and thus $a(\tau(r_1))$ and $a(\tau(r_2))$ are contained in the relative interior of the same 1-dimensional face of $D'$. So we can find an open interval $S$ in the relative boundary of $D'$ containing $a(\tau(r_1))$ and $a(\tau(r_2))$. But $S$ must also be contained in the boundary of $\Omega'$ because $D'$ is a cross section of $\Omega'$ so the result follows from lemma 8.1.

\end{proof}

\begin{figure}[h]
    \centering
    \def\svgwidth{\columnwidth}
    \begin{overpic}[]{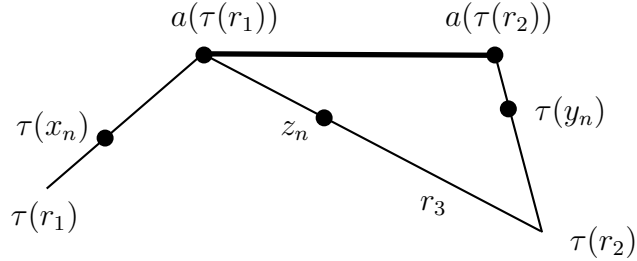}
        \put (-7,2) {$\tau(r_1)$}
        \put (105,-3) {$\tau(r_2)$}
        \put (-6,20) {$\tau(x_n)$}
        \put (98,23) {$\tau(y_n)$}
        \put (47,20) {$z_n$}
        \put (75,5) {$r_3$}
        \put (25,42) {$a(\tau(r_1))$}
        \put (80,42) {$a(\tau(r_2))$}
    \end{overpic}
    \vspace{3mm}
    \caption{The points $a(\tau(r_1))$ and $a(\tau(r_2)$ must lie in the relative interior of the same face of $\Omega'$.}
\end{figure}

We can now prove the main theorem of this section which tells us that if $J$ is the join of the relative interiors of two opposite rigid faces of $\Omega$ then $\tau(J)$ is the join of the relative interiors of two opposite rigid faces of $\Omega'$.

\begin{thm}
Suppose $\tau:\Omega\rightarrow\Omega'$ is an isometry. Let $I_1$ and $I_2$ be the relative interiors of two opposite rigid faces of $\Omega$ and let $J$ be the join of $I_1$ and $I_2$. If $L$ is a line from $I_1$ to $I_2$ then define $I_1'$ and $I_2'$ to be the opposite rigid faces of $\Omega'$ which contain the accumulation points of $\tau(L)$ with the same orientation. If $J'$ is the join of $I_1'$ and $I_2'$ then $J'$ is rigid and $\tau(J)=J'$.
\end{thm}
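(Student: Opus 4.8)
\noindent\textit{Proof proposal.} The plan is to derive both assertions from three earlier facts: an isometry carries a rigid straight line to a rigid straight line (Lemma 3.5), rigidity propagates across a join once a single line is known to be rigid (Lemma 8.3), and rigid rays accumulating in a common face have images accumulating in a common face (Corollary 8.10). I would first dispose of the claim that $J'$ is rigid. Since $I_1$ and $I_2$ are opposite rigid faces, the chosen line $L$ is rigid, so by Lemma 3.5 the image $\tau(L)$ is again a rigid straight line in $\Omega'$, with two accumulation points in $\partial\Omega'$. These two points lie in distinct faces: if they lay in the relative interior of a single face, Lemma 8.1 would give an open segment of $\partial\Omega'$ containing both, forcing the chord $\tau(L)$ into $\partial\Omega'$, which is impossible for a line of $\Omega'$. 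Thus the faces whose relative interiors contain them are distinct, namely $I_1'$ and $I_2'$, and $\tau(L)$ is a rigid line from $I_1'$ to $I_2'$; in particular $I_1'$ and $I_2'$ are opposite. Lemma 8.3 then upgrades this single rigid line to the statement that \emph{every} line from $I_1'$ to $I_2'$ is rigid, so $I_1'$ and $I_2'$ are opposite rigid faces and $J'$ is rigid.

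Next I would establish the forward inclusion $\tau(J)\subseteq J'$. Let $L_0$ be any line of $J$, with accumulation points $\alpha_0\in I_1$ and $\beta_0\in I_2$, and let $\alpha\in I_1$, $\beta\in I_2$ be the accumulation points of the reference line $L$. Because $I_1,I_2$ are opposite rigid faces, $L_0$ is rigid, so $\tau(L_0)$ is again a rigid straight line by Lemma 3.5. To locate its endpoints I compare it with $\tau(L)$: the ray of $L_0$ toward $\alpha_0$ and the ray of $L$ toward $\alpha$ are rigid rays whose accumulation points both lie in $I_1$, so Corollary 8.10 forces their images to accumulate in a common face of $\Omega'$; since the image of the $\alpha$-ray accumulates in $I_1'$ by definition, the image of the $\alpha_0$-ray accumulates in $I_1'$ as well. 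The identical argument with the $\beta$-rays places the other endpoint of $\tau(L_0)$ in $I_2'$. Hence $\tau(L_0)$ is a line from $I_1'$ to $I_2'$, so $\tau(L_0)\subseteq J'$, and taking the union over all $L_0\in J$ gives $\tau(J)\subseteq J'$.

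For the reverse inclusion I would run the same argument for the isometry $\tau^{-1}:\Omega'\rightarrow\Omega$, using the line $\tau(L)$ from $I_1'$ to $I_2'$ as the reference line. Since $\tau^{-1}(\tau(L))=L$ has accumulation points in $I_1$ and $I_2$, the faces produced by the construction applied to $\tau^{-1}$ are exactly $I_1$ and $I_2$; the forward inclusion then yields $\tau^{-1}(J')\subseteq J$, that is $J'\subseteq\tau(J)$. Combining the two inclusions gives $\tau(J)=J'$.

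The main obstacle I anticipate is keeping the orientation bookkeeping honest, that is, guaranteeing that the $\alpha_0$-end of $\tau(L_0)$ is matched to $I_1'$ and the $\beta_0$-end to $I_2'$ rather than swapped, and that the single reference line $L$ performs this matching consistently for every $L_0\in J$. This is precisely where Corollary 8.10 is indispensable, since it is the tool that ties the $I_1$-end of an arbitrary line of the join to the $I_1$-end of $L$ after applying $\tau$. A secondary point I would check carefully is that $\tau(L_0)$ is genuinely a full straight chord of $\Omega'$ reaching both boundary faces, which follows from Lemma 3.5 together with the fact that a line of $\Omega'$ is a maximal straight segment.
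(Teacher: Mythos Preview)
Your proof is correct and follows essentially the same approach as the paper: both arguments use Lemma 3.5 and Lemma 8.3 to see that $J'$ is rigid, invoke Corollary 8.10 to obtain the inclusion $\tau(J)\subseteq J'$, and then appeal to symmetry (applying the same reasoning to $\tau^{-1}$) for the reverse inclusion. Your write-up is more explicit than the paper's, in particular in verifying that the two accumulation points of $\tau(L)$ land in distinct faces and in tracking the orientation via a fixed reference line, but these are elaborations of steps the paper leaves implicit rather than a different route.
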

\begin{proof}
We first mention that $I_1'$ and $I_2'$ are opposite rigid faces of $\Omega'$ because of lemma 8.3 and the fact isometries map rigid lines to rigid lines. So by definition $J'$ is rigid.

If $M$ is a line from $I_1$ to $I_2$ then corollary 8.10 implies that $\tau(M)$ is a rigid line from $I_1'$ to $I_2'$ and thus $\tau(J)\subset J'$. By symmetry we get the opposite inclusion which completes the proof.
\end{proof}



\section{Rigid Cross Sections}
If $\tau$ is an isometry of Hilbert domains then it maps a rigid line to another rigid line. In this section we will generalize this idea to higher dimensions by defining a rigid cross section. This will allow us to use induction and apply ideas in 2 and 3 dimensions to domains of dimension greater than 3.

\begin{defn}
Suppose $\tau:\Omega\rightarrow\Omega'$ is an isometry and $D$ is a cross section of $\Omega$. If $\tau(D)$ is a cross section of $\Omega'$ then $D$ is \textit{rigid under $\tau$}.
\end{defn}

\begin{lem}
If $D$ is an m-dimensional cross section of $\Omega$ that is rigid under $\tau$ then $\tau(D)$ is an m-dimensional cross section of $\Omega'$.
\end{lem}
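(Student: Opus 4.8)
The plan is to reduce the statement to the fact that two isometric Hilbert geometries must have the same dimension, which in turn follows from invariance of domain once we pass from the Hilbert metric topology to the standard topology.

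By Definition 9.1, the hypothesis that $D$ is rigid under $\tau$ means precisely that $\tau(D)$ is a cross section of $\Omega'$, so $\tau(D)=P'\cap\Omega'$ for some affine subspace $P'$ of $\R^n$ of some dimension $m'$ with $2\leq m'\leq n$. The goal is therefore to show $m'=m$. First I would record, using lemma 7.2, that $D$ is an $m$-dimensional convex domain in $P$ and that $\tau(D)$ is an $m'$-dimensional convex domain in $P'$.

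Next I would observe that $\tau$ restricts to an isometry between these two convex domains equipped with their own Hilbert metrics. Indeed, by lemma 7.3 the metric $d_D$ coincides with $d_\Omega$ on $D$ and $d_{\tau(D)}$ coincides with $d_{\Omega'}$ on $\tau(D)$; since $\tau$ is an isometry of $(\Omega,d_\Omega)$ onto $(\Omega',d_{\Omega'})$, its restriction $\tau|_D:(D,d_D)\rightarrow(\tau(D),d_{\tau(D)})$ preserves distances and is a bijection onto $\tau(D)$, hence is an isometry. Then I would invoke the topological input recorded in section 3: on any convex domain the topology induced by the Hilbert metric agrees with the standard topology. Consequently $D$ is homeomorphic to an open subset of $\R^m$ and $\tau(D)$ to an open subset of $\R^{m'}$, so they are topological manifolds of dimensions $m$ and $m'$ respectively. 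Since an isometry is in particular a homeomorphism, $\tau|_D$ is a homeomorphism between an $m$-manifold and an $m'$-manifold; by invariance of domain the manifold dimension is a homeomorphism invariant, which forces $m'=m$. This shows $\tau(D)$ is an $m$-dimensional cross section of $\Omega'$.

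The only genuine content, and hence the step to be careful about, is this last one: one must ensure that the dimension really is an isometry invariant. I expect the cleanest route is precisely the reduction to the standard topology followed by invariance of domain, rather than any direct metric computation; everything preceding it is bookkeeping from lemmas 7.2 and 7.3 together with the definition of rigidity.
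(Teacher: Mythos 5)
Your proposal is correct and follows essentially the same route as the paper, which simply notes that $\tau(D)$ is a cross section by definition of rigidity and that $\dim(\tau(D))=m$ by invariance of domain; you have merely filled in the (correct) details of why invariance of domain applies, namely that $\tau|_D$ is a homeomorphism between open subsets of affine subspaces once one identifies the Hilbert metric topology with the standard one.
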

\begin{proof}
By definition $\tau(D)$ is a cross section of $\Omega'$. It follows from invariance of domain that $dim(\tau(D))=m$.
\end{proof}

\begin{lem}
If $\tau:\Omega\rightarrow\Omega'$ is an isometry and $D$ is a cross section of $\Omega$ that is rigid under $\tau$ then $\tau:(D,d_D)\rightarrow(\tau(D),d_{\tau(D)})$ is a Hilbert isometry.
\end{lem}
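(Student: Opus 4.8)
The plan is to compute directly, reducing the statement to the fact that the intrinsic Hilbert metric of a cross section coincides with the ambient Hilbert metric restricted to it, which is precisely Lemma 7.3. Since $\tau$ is a bijection from $\Omega$ onto $\Omega'$, its restriction to $D$ is automatically a bijection onto its image $\tau(D)$, so the only thing left to check is that $\tau|_D$ sends the distance $d_D$ on $D$ to the distance $d_{\tau(D)}$ on $\tau(D)$.

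First I would fix two points $x$ and $y$ in $D$. Because $D$ is a cross section of $\Omega$ and $x,y\in D$, Lemma 7.3 gives $d_D(x,y)=d_\Omega(x,y)$. Since $\tau$ is an isometry of $(\Omega,d_\Omega)$ with $(\Omega',d_{\Omega'})$, we have $d_\Omega(x,y)=d_{\Omega'}(\tau(x),\tau(y))$. This is where the rigidity hypothesis does its only real work: by Definition 9.1 the assumption that $D$ is rigid under $\tau$ means exactly that $\tau(D)$ is a cross section of $\Omega'$, so Lemma 7.3 applies on the target side as well, to the cross section $\tau(D)$ and the points $\tau(x),\tau(y)\in\tau(D)$, yielding $d_{\Omega'}(\tau(x),\tau(y))=d_{\tau(D)}(\tau(x),\tau(y))$. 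Chaining the three equalities gives $d_D(x,y)=d_{\tau(D)}(\tau(x),\tau(y))$, which is the desired isometry.

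I do not expect any genuine obstacle here; the entire content is carried by Lemma 7.3, whose force is that the endpoints $\alpha,\beta$ of the chord through $x$ and $y$ lie in the relative boundary of the cross section, which by Lemma 7.2 is contained in the boundary of the ambient domain, so the cross ratio defining the distance is computed identically in either domain. The rigidity hypothesis is used solely to guarantee that $\tau(D)$ really is a cross section of $\Omega'$, so that $d_{\tau(D)}$ is even meaningful as a Hilbert metric and Lemma 7.3 is available on the codomain; without it $\tau(D)$ need not be a cross section and the conclusion would not make sense.
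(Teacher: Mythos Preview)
Your proof is correct and follows essentially the same approach as the paper: both arguments invoke Lemma~7.3 on the domain and on the codomain (the latter being available precisely because rigidity makes $\tau(D)$ a cross section of $\Omega'$) and then use that $\tau$ is an isometry for $d_\Omega$ and $d_{\Omega'}$. The paper's proof additionally cites Lemma~9.2, but that is only to record the dimension of $\tau(D)$ and is not needed for the metric identity.
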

\begin{proof}
By lemma 7.3 and lemma 9.2 the metrics $d_D$ and $d_\Omega$ agree on $D$ and $d_{\tau(D)}$ and $d_{\Omega'}$ agree on $\tau(D)$. It follows immediately that $\tau|_D$ is a Hilbert isometry.
\end{proof}

\begin{lem}
If $D$ is a 2-dimensional cross section of $\Omega$ and the following properties are satisfied:
\begin{enumerate}
\item $\partial D$ contains two extreme points $e_1, e_2$ of $\Omega$.

\item $\partial D$ contains a third point $\alpha$ which is not contained in $[e_1,e_2]$ and every straight line in $D$ from $\alpha$ to a point on $(e_1,e_2)$ is a rigid line in $\Omega$.

\end{enumerate}
Then $D$ is rigid under any isometry $\tau:\Omega\rightarrow\Omega'$.
\end{lem}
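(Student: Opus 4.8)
The plan is to exhibit a single $2$-plane $P'\subset\mathbb{R}^n$ with $\tau(D)\subseteq P'$ and then to identify $\tau(D)$ with the cross section $P'\cap\Omega'$. The whole argument rests on one mechanism: by lemma 3.6, $\tau$ sends each rigid straight line of $\Omega$ to a rigid straight line of $\Omega'$, so the $\tau$-image of a rigid chord of $D$ is again an honest straight segment; if two rigid chords meet at an interior point of $D$ their images meet (as $\tau$ is injective) and are therefore coplanar; and a straight line that meets a fixed $2$-plane in two distinct points must lie in that plane. First I would inventory the rigid chords at my disposal. Because $e_1$ and $e_2$ are extreme points of $\Omega$, the criterion of proposition 3.3 makes every chord of $D$ with $e_1$ (respectively $e_2$) as an accumulation point rigid, so I have two full pencils of rigid chords that cross each other transversally at every interior point of $D$. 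The hypothesis on $\alpha$ gives a third pencil $\{L_p:p\in(e_1,e_2)\}$ of rigid chords, transverse to both vertex pencils and sweeping out the open triangle $T=(\alpha,e_1,e_2)\subseteq D$.

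Next I would pin down the target plane. Choose a chord $A_0$ through $e_1$ and a chord $B_0$ through $e_2$ meeting at an interior point $z_0\in T$; both are rigid, so $\tau(A_0)$ and $\tau(B_0)$ are straight lines crossing at $\tau(z_0)$, and I set $P':=\operatorname{aff}\bigl(\tau(A_0)\cup\tau(B_0)\bigr)$, a genuine $2$-plane. For each $p$ the line $L_p$ meets $A_0$ and $B_0$, and for all $p$ in an open subset $U\subseteq(e_1,e_2)$ (the sole exception being the one $L_p$ through $z_0$) these two intersection points are distinct and interior. For such $p$ the straight line $\tau(L_p)$ meets $P'$ in the two distinct points $\tau(L_p\cap A_0)$ and $\tau(L_p\cap B_0)$, whence $\tau(L_p)\subseteq P'$. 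Thus $\tau$ already maps the open sub-triangle $\bigcup_{p\in U}L_p$ into $P'$.

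To propagate this to all of $D$ I would run the same meeting-in-two-points argument through the two vertex pencils, which cross transversally throughout $D$. Any chord through $e_1$ that passes through the patch $\bigcup_{p\in U}L_p$ meets at least two of the lines $L_p$ in distinct interior points, so its whole image lies in $P'$; these chords already reach beyond $T$, and feeding the resulting region into the $e_2$-pencil, and alternating, one covers $D$. Concretely, the set $V=\{x\in D:\tau(x)\in P'\}$ is closed in $D$ (as $\tau$ is continuous and $P'$ closed), contains a nonempty open set, and contains the whole of any rigid chord meeting it in two points; since $D$ is connected and the two extreme pencils join any point of $D$ to the initial patch through such chords, $V=D$, i.e. $\tau(D)\subseteq P'$. (When $D$ is itself the triangle $T$ the first paragraph's construction already gives this, with no propagation needed.) Finally $\tau(D)\subseteq P'\cap\Omega'=:D'$, a $2$-dimensional cross section of $\Omega'$; $\tau|_D$ is a continuous injection into the plane $P'$, so $\tau(D)$ is open in $P'$ by invariance of domain, and $\tau:(D,d_D)\to(\tau(D),d_{D'})$ is an isometry by lemmas 7.3 and 9.2, so $\tau(D)$ is complete, hence closed, in the connected set $D'$. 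A nonempty open-and-closed subset of $D'$ equals $D'$, so $\tau(D)=P'\cap\Omega'$ is a cross section and $D$ is rigid under $\tau$ in the sense of definition 9.1.

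I expect the genuine obstacle to be the coplanarity step in the middle paragraph. Straightness of the image curves is by itself far too weak: a two-parameter family of straight segments can perfectly well sweep out a ruled surface rather than a plane, so one cannot conclude $\tau(D)$ is flat from lemma 3.6 alone. What rescues the argument is the availability of three mutually transverse pencils of rigid chords — the two extreme-point pencils from $e_1,e_2$ plus the $\alpha$-pencil — which forces each image line to meet the candidate plane in two distinct points instead of only one. This is exactly why the hypotheses insist both that $e_1,e_2$ be extreme and that the lines through $\alpha$ be rigid; with only the two vertex pencils (whose members through a common vertex never cross inside $D$) the bootstrap cannot even start.
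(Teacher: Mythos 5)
Your proposal is correct and follows essentially the same route as the paper's own proof: both pin down a target $2$-plane $P'$ from the images of a pair of crossing rigid chords, use the fact that a straight image line meeting $P'$ in two distinct points must lie in $P'$ to sweep out first the triangle $(e_1,e_2,\alpha)$ via the three pencils and then all of $D$ via chords through the vertices, and conclude with the open-and-closed-in-connected argument that $\tau(D)=P'\cap\Omega'$. The only differences are cosmetic (the paper fixes three specific lines through a common interior point to define $P'$, where you take the affine span of two image lines), so no further comparison is needed.
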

\begin{proof}
Let $e_1, e_2, \alpha$ be the three points in $\partial D$ that satisfy condition 1 and 2 in the previous definition. Because $e_1$ and $e_2$ are extreme points of $\Omega$ and $\alpha$ does not lie in $[e_1,e_2]$ it follows that $\alpha$ can not lie on the straight line containing $e_1$ and $e_2$. Let $\Delta\subset D$ be the interior of the triangle with vertices $e_1, e_2, \alpha$.

Pick a point $x$ in $\Delta$ and define $L_i$ to be the straight line in $\Delta$ which contains $(x, e_i)$. We can choose a straight line $L_\alpha$ in $D$ that goes from $\alpha$ to a point in $(e_1,e_2)$ and intersects $L_i$ in a point $\alpha_i$ which is different from $x$, see figure 9. Define $P'$ to be the two dimensional plane that contains the three points $\tau(\alpha_1), \tau(\alpha_2), \tau(x)$ and let $D'=\Omega'\cap P'$.

We will show that $\tau(D)=D'$. The line $L_\alpha$ is rigid by assumption, thus $\tau(L_i)$ and $\tau(L_\alpha)$ are straight lines in $\Omega'$. Each of these three lines contain two points in $D'$ so the entire line must be contained in $D'$. Any point in $\Delta$ lies on a rigid line in $\Omega$ which goes from one of the vertices of $\Delta$ to the opposite edge of $\Delta$ and passes through two of the three lines $L_i$ and $L_\alpha$ in distinct points. Since the image of a rigid line under $\tau$ is a rigid line it follows that $\tau(\Delta)\subset D'$. If $y$ lies in $D$ but outside of $\Delta$ the it lies on a rigid line which goes from a vertex of $\Delta$ and passes through two points in $D$ whose images under $\tau$ are contained in $D'$. The image of this line under $\tau$ is contained in $D'$ so $\tau(D)\subset D'$.

Because $\tau$ is an isometry $\tau(D)$ is an open and closed subset of the connected set $D'$ so $\tau(D)=D'$.
\end{proof}

\begin{figure}[h]
    \centering
    \def\svgwidth{\columnwidth}
    \begin{overpic}[]{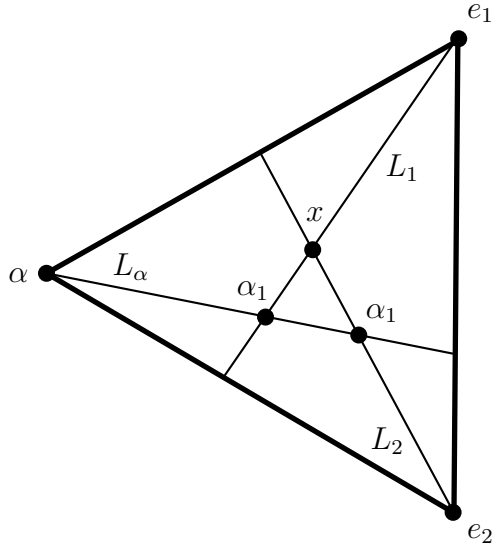}
        \put (-6,48.75) {$\alpha$}
        \put (86.5,102) {$e_1$}
        \put (86.5,-3) {$e_2$}
        \put (70,70) {$L_1$}
        \put (67,15) {$L_2$}
        \put (15, 50) {$L_\alpha$}
        \put (54, 61) {$x$}
        \put (40, 46) {$\alpha_1$}
        \put (66, 41.5) {$\alpha_1$}
    \end{overpic}
    \vspace{3mm}
    \caption{Lemma 9.4}
\end{figure}

\begin{cor}
If $D$ is a 2-dimensional cross section of $\Omega$ which contains an extreme triangle of $\Omega$ and $\tau:\Omega\rightarrow\Omega'$ is an isometry then $D$ is rigid under $\tau$.
\end{cor}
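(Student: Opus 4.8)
\noindent\emph{Proof proposal.} The plan is to reduce everything to Lemma 9.4 by reading off its three distinguished boundary points from the vertices of the extreme triangle. Let $T=[e_1,e_2,e_3]$ be the extreme triangle contained in $D$, so that $e_1,e_2,e_3$ are extreme points of $\Omega$ in general position and the interior of $T$ lies in $\Omega$. First I would check that each vertex lies in the relative boundary $\partial D$ rather than in $D$ itself: since each $e_i$ is an extreme point it belongs to $\partial\Omega$, and since $D=P\cap\Omega$ is open in the plane $P$ while $\overline{T}\subset\overline{D}\subset P$, each $e_i$ is a limit of points of $D$ that does not itself lie in $\Omega$, hence lies in $\partial D$. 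I would then designate $e_1,e_2$ as the two extreme points and $\alpha:=e_3$ as the third point in the statement of Lemma 9.4. Because $e_1,e_2,e_3$ are affinely independent (they span a genuine $2$-simplex), $e_3\notin[e_1,e_2]$, which together with the previous paragraph gives hypothesis (1) and the first half of hypothesis (2).

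The substantive step is verifying the rigidity condition in hypothesis (2): that every straight line in $D$ from $\alpha=e_3$ to a point $q\in(e_1,e_2)$ is a rigid line in $\Omega$. Such a line is the intersection with $\Omega$ of the straight line $\ell$ through $e_3$ and $q$; the segment $[e_3,q]$ is a cevian of $T$, so it passes through the interior of $T$, which is contained in $\Omega$, confirming that $\ell\cap\Omega$ is indeed a nonempty line in $D$. Now $\ell\cap\overline{\Omega}$ is a segment with both endpoints in $\partial\Omega$, and $e_3$ lies on it. If $e_3$ were interior to this segment it would be contained in the interior of a line segment in $\overline{\Omega}$, contradicting the fact that $e_3$ is an extreme point; hence $e_3$ is an endpoint, i.e. one of the two accumulation points of $\ell\cap\Omega$ in $\partial\Omega$. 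By the observation following Lemma 3.6 (a consequence of Proposition 3.3), any line in $\Omega$ having an extreme point of $\Omega$ as an accumulation point is rigid. Therefore every line from $e_3$ to a point of $(e_1,e_2)$ is rigid, establishing hypothesis (2).

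With both hypotheses of Lemma 9.4 in hand, the lemma applies directly to $D$ and the points $e_1,e_2,\alpha=e_3$, yielding that $D$ is rigid under the isometry $\tau$, which is exactly the claim. I expect no serious obstacle here: the rigidity of each cevian line is immediate from the extreme-point criterion, so the only points demanding a little care are the bookkeeping that the vertices sit in $\partial D$ (not in $D$) and that each cevian genuinely meets the interior of $T$, and hence $\Omega$. All the real work has already been absorbed into Lemma 9.4.
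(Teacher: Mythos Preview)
Your proposal is correct and follows exactly the paper's approach: the paper's proof is the single sentence ``Because $D$ contains an extreme triangle the conditions of lemma 9.4 are satisfied so $D$ is rigid under $\tau$,'' and you have simply unpacked the verification of those conditions (using $e_1,e_2$ as the two extreme points, $\alpha=e_3$ as the third point, and Proposition~3.3 to check rigidity of the cevians through the extreme vertex $e_3$). No differences in strategy or substance.
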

\begin{proof}
Because $D$ contains an extreme triangle the conditions of lemma 9.4 are satisfied so $D$ is rigid under $\tau$.
\end{proof}

\section{Isometries with Focusing Points}

If $\tau:\Omega\rightarrow\Omega'$ is a projective transformation of Hilbert domains then it maps any two rays in $\Omega$ converging to the same point in $\partial\Omega$ to two rays in $\Omega'$ which converge to the same point in $\partial\Omega'$. This property does not hold for the quadratic transformation of the triangle as discussed in section 4 and for the isometries of non Lorentzian symmetric cones which are not projective. Understanding this property in more generality is key to proving that a given isometry is a projective transformation and leads to the following definition.

\begin{defn}
Suppose $\tau:\Omega\rightarrow\Omega'$ is an isometry. An extreme point $e$ of $\Omega$ is a {\em focusing point} of $\tau$ if for any two rays $r_1, r_2$ in $\Omega$ with $e=a(r_1)=a(r_2)$ we have $f=a(\tau(r_1))=a(\tau(r_2))$. We will say that {\em $e$ focuses to $f$ under $\tau$}. A {\em focusing m-simplex} of $\tau$ is an extreme $m$-simplex in $\Omega$ whose vertices are focusing points of $\tau$.
\end{defn}

Every extreme point of $\Omega$ is a focusing point of $\tau$ if $\tau$ is a projective transformation. In this thesis we will show focusing points characterize the isometries of the Hilbert metric by showing that any isometry with a focusing point is a projective transformation. The next lemma and corollary show that a focusing point of $\tau$ must focus to an extreme point of $\Omega'$.

\begin{lem}
Suppose $\tau:\Omega\rightarrow\Omega'$ is an isometry and $e$ is an extreme point of $\Omega$. There exists a ray $r_1$ with $a(r_1)=e$ such that $a(\tau(r_1))$ is an extreme point of $\Omega'$ if and only if $e$ is a focusing point of $\tau$.
\end{lem}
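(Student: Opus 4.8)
The plan is to prove the two implications separately: first that the existence of a single ray to $e$ whose image accumulates at an extreme point forces the focusing property, and then that a focusing point must focus to an extreme point (so that in fact \emph{every} ray to $e$ qualifies).

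For the forward implication I would assume there is a ray $r_1$ with $a(r_1)=e$ and $f:=a(\tau(r_1))$ an extreme point of $\Omega'$, and let $r_2$ be any other ray with $a(r_2)=e$. By Lemma 8.6 there are sequences $\{x_n\}$ on $r_1$ and $\{y_n\}$ on $r_2$ with $x_n,y_n\to e$ and $d_\Omega(x_n,y_n)\le C$; since $\tau$ is an isometry, $d_{\Omega'}(\tau(x_n),\tau(y_n))\le C$. Now $\tau(x_n)$ lies on the geodesic ray $\tau(r_1)$ and converges to the extreme point $f$, while $\tau(y_n)$ converges to $a(\tau(r_2))$. If $a(\tau(r_2))\neq f$, then Lemma 8.8, applied to these two sequences, forces $d_{\Omega'}(\tau(x_n),\tau(y_n))\to\infty$, a contradiction. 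Hence $a(\tau(r_2))=f$ for every ray $r_2$ to $e$, so $e$ is a focusing point.

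For the reverse implication it suffices to show that if $e$ focuses to $f$ then $f$ is extreme in $\Omega'$, since then every ray to $e$ already has $a(\tau(r_1))=f$ extreme. Here I would use the join machinery of Theorem 8.11. Choose any extreme point $\beta\neq e$ of $\Omega$, which exists by Krein-Milman. The open segment $(e,\beta)$ must lie in $\Omega$: if it met $\partial\Omega$, then by convexity all of $[e,\beta]$ would lie in $\partial\Omega$, and Lemma 8.1 would place the two extreme points $e,\beta$ in the relative interior of a common face of $\Omega$, which is impossible. Thus $L:=(e,\beta)$ is a line in $\Omega$, and it is rigid because both of its endpoints are extreme. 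The singletons $\{e\}$ and $\{\beta\}$ are therefore opposite rigid faces whose join is exactly the one-dimensional set $L$. Applying Theorem 8.11 gives $\tau(L)=J'$, the join of two opposite rigid faces $I_1',I_2'$ of $\Omega'$, where $I_1'$ contains the accumulation point of $\tau(L)$ at the $e$-end; by the focusing hypothesis this point is $f$, so $f\in I_1'$. Since $\tau$ is a homeomorphism, $\tau(L)$ is homeomorphic to the one-dimensional $L$, so $J'$ is one-dimensional. But the join of $I_1'$ and $I_2'$ contains a cone over $I_1'$ with apex a point of $I_2'$, of dimension $\dim I_1'+1$; hence $\dim I_1'=0$, i.e. $I_1'=\{f\}$ and $f$ is extreme.

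The one genuinely delicate point in the forward implication is that the images $\tau(r_1),\tau(r_2)$ are only \emph{geodesic} rays rather than straight rays, so Lemma 8.8 cannot be quoted verbatim; what I would emphasize is that its proof uses nothing about the rays themselves, only that the two sequences converge to distinct boundary points one of which is extreme, so it applies to $\{\tau(x_n)\}$ and $\{\tau(y_n)\}$ directly. The substantive geometry lives in the reverse implication, but once one notices that the join of two extreme points is just the rigid segment between them, Theorem 8.11 reduces ``$f$ is extreme'' to the elementary fact that a homeomorphic image of a one-dimensional set is one-dimensional. I expect the only objections a careful reader would raise are the verifications that $(e,\beta)\subset\Omega$ and that the relevant joins have the claimed dimensions, both of which are routine consequences of Lemma 8.1 and the definition of a join.
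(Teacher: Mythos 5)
Your forward implication is sound: it re-derives, in this special case, what the paper obtains by citing Corollary 8.10 (every ray accumulating at the extreme point $e$ is rigid, so the images of two such rays accumulate in the relative interior of a single face of $\Omega'$, which collapses to a singleton once one of the image points is extreme). Your observation that the proof of Lemma 8.8 uses only that the two sequences converge to distinct boundary points, one of which is extreme, and not that they lie on straight rays, is correct and is exactly what lets you apply it to $\{\tau(x_n)\}$ and $\{\tau(y_n)\}$.

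The reverse implication has a genuine gap. It rests on the claim that for any extreme point $\beta\neq e$ the open segment $(e,\beta)$ lies in $\Omega$, justified by saying that otherwise Lemma 8.1 would put $e$ and $\beta$ in the relative interior of a common face. That is false: any two vertices of a simplex, or the apex and a boundary point of the base of a cone, are extreme points joined by a segment lying entirely in $\partial\Omega$. Lemma 8.1 requires an \emph{open} boundary segment containing the two points in its interior; $(e,\beta)$ does not contain its endpoints, and no open boundary segment can contain an extreme point at all. What your argument actually needs is an extreme line through $e$, and such a line need not exist --- the extreme points where it fails (vertices of simplices, apexes of cones) are precisely the ones for which this lemma has to do real work. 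A further warning sign is that your reverse direction never uses the focusing hypothesis except to name the limit point $f$; without that hypothesis the conclusion is false (the quadratic isometry of the triangle blows a vertex up to an edge). The paper's argument avoids all of this: assuming $a(\tau(r_1))$ lies in an open boundary segment $S$, it picks a second point $\alpha_2\in S$, uses Lemma 8.3 to see that the line from the opposite endpoint $\beta$ of $\tau(L)$ to $\alpha_2$ is rigid, and applies Corollary 8.10 to $\tau^{-1}$ to produce a second ray in $\Omega$ accumulating at $e$ whose image accumulates at $\alpha_2\neq\alpha_1$, contradicting that $e$ is a focusing point. You should replace your reverse implication with an argument of this kind.
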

\begin{proof}
Suppose there exists a ray $r_1$ in $\Omega$ with $a(r_1)=e$ such that $a(\tau(r_1))$ is an extreme point of $\Omega'$. If $r_2$ is a ray in $\Omega$ with $a(r_2)=e$ then $r_2$ is rigid so it follows from corollary 8.10 that $a(\tau(r_1))=a(\tau(r_2))$ and so $e$ is a focusing point of $\tau$.

For the other direction suppose that $e$ is a focusing point of $\tau$. Suppose $r_1$ is a rigid ray with $a(r_1)=e$ but $a(\tau(r_1))$ is not an extreme point of $\Omega'$. Then there is an open line segment $S\subset\partial\Omega'$ that contains $a(\tau(r_1))$. If $L$ is the straight line in $\Omega$ that contains $r_1$ then $\overline{\tau(L)}$ intersects $\partial\Omega$ at a point $\alpha_1$ in $S$ and at a second point $\beta$ which is not in $S$, see figure 10. Pick a point $\alpha_2$ in $S$ which is distinct from $\alpha_1$. Since $\tau(L)$ is a rigid line it follows from lemma 8.3 that the straight line $(\beta, \alpha_2)$ is a rigid line in $\Omega'$. By corollary 8.10, $\tau^{-1}((\beta, \alpha_2))$ is rigid line in $\Omega$ whose closure contains $e$. This is a contradiction because $e$ is a focusing point and $\alpha_1$ and $\alpha_2$ were chosen to be distinct points in $\partial\Omega'$.
\end{proof}

\vspace{10mm}
\begin{figure}[h]
    \centering
    \def\svgwidth{\columnwidth}
    \begin{overpic}[]{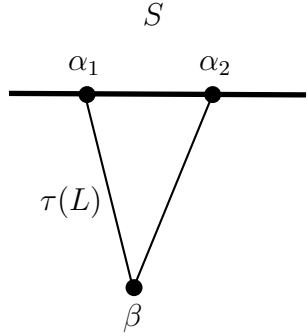}
        \put (20,77) {$\alpha_1$}
        \put (64,77) {$\alpha_2$}
        \put (38.5,-9) {$\beta$}
        \put (10.5,30) {$\tau(L)$}
        \put (45,92) {$S$}
    \end{overpic}
    \vspace{3mm}
    \caption{Lemma 10.2}
\end{figure}

\begin{cor}
If $e$ is a focusing point of the isometry $\tau:\Omega\rightarrow\Omega'$ and $e$ focuses to the point $f$ under $\tau$ then $f$ is an extreme point of $\Omega'$.
\end{cor}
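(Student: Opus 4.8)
The plan is to read off the conclusion directly from Lemma 10.2 together with the definition of what it means for $e$ to focus to $f$. Since $e$ is assumed to be a focusing point of $\tau$, the equivalence in Lemma 10.2 gives us a ray $r_1$ in $\Omega$ with $a(r_1)=e$ such that $a(\tau(r_1))$ is an extreme point of $\Omega'$. This is the whole content we need from the earlier lemma; the hard direction of Lemma 10.2 has already produced the existence of such a well-behaved ray.

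The remaining step is to identify $a(\tau(r_1))$ with $f$. By hypothesis $e$ focuses to $f$, which by Definition 10.1 means that \emph{every} ray $r$ with $a(r)=e$ satisfies $a(\tau(r))=f$. Applying this to the particular ray $r_1$ furnished above yields $a(\tau(r_1))=f$. Combining the two facts, $f=a(\tau(r_1))$ is an extreme point of $\Omega'$, which is exactly the assertion of the corollary.

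I expect there to be essentially no obstacle here: the corollary is a short consequence of the preceding lemma, and the only point requiring care is to line up the correct quantifiers. Lemma 10.2 guarantees \emph{at least one} good ray, while the definition of ``$e$ focuses to $f$'' controls \emph{all} rays accumulating at $e$; the argument works precisely because the single ray supplied by Lemma 10.2 is automatically subject to the universal focusing condition, forcing its image to accumulate at the distinguished point $f$. Thus the proof is a one-line synthesis once both earlier statements are invoked in the right order.
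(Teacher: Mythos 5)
Your proof is correct and is exactly the argument the paper intends: the corollary is stated without a separate proof precisely because it is the one-line synthesis you describe, combining the forward direction of Lemma 10.2 (which supplies a ray $r_1$ with $a(r_1)=e$ and $a(\tau(r_1))$ extreme) with the universal quantifier in Definition 10.1 (which forces $a(\tau(r_1))=f$). Nothing is missing.
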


If an isometry has a focusing point then the family of lines through the focusing point in the domain is mapped to a family of lines through a focusing point in the codomain with the same orientation. If a continuous map of $\mathbb{R}^n$ maps $0$ to $0$ and maps $n+1$ families of lines in general position to these same $n+1$ families of lines then it is the identity of the projective linear group, see lemma 10.4. We will use this idea to show that if an isometry has $n+1$ focusing points in general position then it is a projective transformation. We have already seen this idea in section 4 where the isometry $\Lambda_2$ maps the families of lines through the vertices of the triangle to three families of lines in the hexagonal plane.

\begin{lem}
Suppose $n\geq 2$ and $f:\mathbb{R}^n\rightarrow\mathbb{R}^n$ is a continuous function satisfying the following:

\vspace{5mm}
1.) f(0)=0

2.) If $L$ is a line in $\mathbb{R}^n$ parallel to the $i^{th}$ coordinate axis of $\mathbb{R}^n$ then $f(L)$ is \indent a line in $\mathbb{R}^n$ parallel to the $i^{th}$ coordinate axis of $\mathbb{R}^n$.

3.) f sends lines parallel to the line $t\rightarrow(t,...,t)$ to lines parallel to the line \indent $t\rightarrow(t,...,t)$.

\vspace{5mm}
\noindent then there exists $k\in\mathbb{R}\setminus\{0\}$ such that $f(\textbf{x})=k\textbf{x}$ for all $\textbf{x}\in\mathbb{R}^n$.
\end{lem}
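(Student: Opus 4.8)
The plan is to exploit the three hypotheses in sequence: conditions 1 and 2 will force $f$ to split as a product of one-variable functions, condition 3 will then collapse these into a single additive function, and continuity will finish the argument via Cauchy's functional equation.

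First I would write $f(\textbf{x})=(f_1(\textbf{x}),\ldots,f_n(\textbf{x}))$ and use condition 2 to perform a separation of variables. Fix $j$ and an index $i\neq j$. A line $L$ parallel to the $i$-th axis has the form $\{\textbf{a}+t\textbf{e}_i:t\in\mathbb{R}\}$, and since $f(L)$ is a line parallel to the $i$-th axis, its $j$-th coordinate is constant; hence $f_j(\textbf{a}+t\textbf{e}_i)$ does not depend on $t$. As this holds for every $i\neq j$, the function $f_j$ depends only on the single variable $x_j$, so there are continuous functions $g_j:\mathbb{R}\rightarrow\mathbb{R}$ with $f(\textbf{x})=(g_1(x_1),\ldots,g_n(x_n))$. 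Condition 1 gives $g_j(0)=0$, and the fact that the image of $L$ is an \emph{entire} line (not merely contained in one) shows each $g_j$ is surjective; this surjectivity is what I will use at the very end to guarantee $k\neq 0$.

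Next I would bring in condition 3. The image under $f$ of the diagonal line $\{\textbf{a}+t\textbf{1}:t\in\mathbb{R}\}$, where $\textbf{1}=(1,\ldots,1)$, is $\{(g_1(a_1+t),\ldots,g_n(a_n+t)):t\in\mathbb{R}\}$, and for this set to be a line parallel to the diagonal, all pairwise coordinate differences must be independent of $t$:
\[ g_i(a_i+t)-g_j(a_j+t) = g_i(a_i)-g_j(a_j) \]
for all indices $i,j$, all base points, and all $t$. Rearranging yields $g_i(a_i+t)-g_i(a_i)=g_j(a_j+t)-g_j(a_j)$. Setting $a_i=a_j=0$ and using $g_j(0)=0$ forces $g_i=g_j$ for every pair $i,j$, so all the coordinate functions coincide with a single continuous $g$. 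Setting the base point of one side to $0$ then gives $g(a+t)=g(a)+g(t)$ for all $a,t$, which is Cauchy's functional equation.

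The remaining step is routine: a continuous solution of Cauchy's equation is linear, so $g(t)=kt$ for some $k\in\mathbb{R}$, whence $f(\textbf{x})=k\textbf{x}$. The surjectivity of $g$ established above rules out $k=0$. I expect the main obstacle to be the separation-of-variables step, specifically insisting that the image of a coordinate-parallel line is a \emph{full} line, so that each $g_j$ is both legitimately a function of one variable and surjective; once $f$ is known to be a product of single-variable maps, the diagonal hypothesis and continuity finish the argument quickly.
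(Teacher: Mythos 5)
Your proposal is correct and follows essentially the same route as the paper: use condition 2 to separate variables so that each coordinate function depends only on its own variable, use the diagonal lines of condition 3 first to identify all the coordinate functions with a single $g$ and then to show $g$ satisfies Cauchy's functional equation, and conclude by continuity. Your explicit remark that surjectivity of $g$ (from the image of a line being a full line) rules out $k=0$ is a small point the paper leaves implicit, but the argument is the same.
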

\begin{proof}
The function $f$ has the following form
\[ f(x_1,...,x_n)=(f_1(x_1,...,x_n),...,f_n(x_1,...,x_n)) \]

\noindent Condition 2 of the lemma implies that the coordinate function $f_i$ of $f$ only depends on the variable $x_i$. Choose $j\in\{1,...,n\}$ such that $j\neq i$. We will show that $f_i$ does not depend on the variable $x_j$. Let $\textbf{x}$ and $\textbf{x}'$ be two points in $\mathbb{R}^n$ such that all of the coordinates of $\textbf{x}$ and $\textbf{x}'$ are the same except for the $j^{th}$ coordinate. It follows that $\textbf{x}$ and $\textbf{x}'$ lie on a line parallel to the $j^{th}$ coordinate axis of $\mathbb{R}^n$ and therefore the same is true for $f(\textbf{x})$ and $f(\textbf{x}')$. Thus $f_i(\textbf{x})=f_i(\textbf{x}')$ and we are done.

Using condition 1 of the lemma we see that $f_i(0)=0$ for all $i$. Consider the line $L_a$ defined by the equation
\[x(t)=(a,a,0,...,0)+t(1,1,...,1)\]

\noindent where $a$ is any real number. Since $L_a$ is parallel to the line $t\rightarrow(t,...,t)$ condition 3 of the lemma implies that $f(L_a)$ is a line parallel to the line $t\rightarrow(t,...,t)$. Thus $f_1(x(t))-f_2(x(t))=c$ is a constant that doesn't depend on $t$. Plugging in $t=-a$ we see that $c=0$ because $f_i(0)=0$ and $f_i$ only depends on $x_i$. Thus $f_1(x(t))=f_2(x(t))$ and so $f_1(a,a,0,...,0)=f_2(a,a,0,...,0)$ for all real numbers $a$. Because $f_1$ only depends on $x_1$ and $f_2$ only depends on $x_2$ we have $f_2(x_1,x_2,...,x_n)=f_1(x_2,0,...,0)$. The above argument does not depend on the coordinate so we have
\[ f(x_1,...,x_n)=(f_1(x_1,0,...,0),f_1(x_2,0,...0),...,f_1(x_n,0,...,0)) \]

Define the function $g:\mathbb{R}\rightarrow\mathbb{R}$ by $g(x)=f_1(x,0,...,0)$. The next step is to show that $g$ is linear. Let $a,b$ be real numbers and let $M_a$ be the line in $\mathbb{R}^n$ defined by the equation
\[x(t)=(a,0,0,...,0)+t(1,1,...,1)\]

By condition 3 of the lemma, $f_1(t+a,0,...,0)-f_1(t,0,...,0)=g(t+a)-g(t)$ is a constant that doesn't depend on $t$. Choosing $t=0$ and $t=b$ we see that $g(a)-0=g(a+b)-g(b)$ and therefore $g$ is additive. Because $g$ is an additive continuous function of $\mathbb{R}$ it must be linear and the conclusion follows.
\end{proof}

\begin{lem}
Suppose $\tau:\Omega\rightarrow\Omega'$ is an isometry and $\Delta$ is a focusing n-simplex of $\tau$ where $n=dim(\Omega)=dim(\Omega')$. If $e_0,..,e_n$ are the vertices of $\Delta$ and $e_i$ focuses to the point $f_i$ under $\tau$ then $\tau(\Delta)$ is a focusing n-simplex of $\tau^{-1}$ with vertices $f_0,...,f_n$.
\end{lem}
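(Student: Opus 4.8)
The plan is to read off the behaviour of $\tau$ on the skeleton of $\Delta$ from the focusing hypothesis, prove the inclusion $\tau(\Delta^\circ)\subseteq\mathrm{conv}\{f_0,\dots,f_n\}$ by a coning induction, and then obtain equality by exploiting the symmetry between $\tau$ and $\tau^{-1}$. First I would collect the cheap facts. By corollary 10.3 each $f_i$ is an extreme point of $\Omega'$. Each edge $(e_i,e_j)$ joins two extreme points of $\Omega$, so it is a rigid line by the remark following lemma 3.5, and lemma 3.5 shows that $\tau((e_i,e_j))$ is a rigid line in $\Omega'$. A ray inside this edge converging to $e_i$ is sent to a ray converging to $f_i$ because $e_i$ focuses to $f_i$, and likewise at $e_j$; hence $\tau((e_i,e_j))=(f_i,f_j)$, and since a line has two distinct accumulation points the $f_i$ are pairwise distinct. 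Reading this backwards, a ray in $(f_i,f_j)$ converging to $f_i$ has $\tau^{-1}$-image a ray in $(e_i,e_j)$ converging to the extreme point $e_i$, so lemma 10.2 applied to $\tau^{-1}$ shows that each $f_i$ is a focusing point of $\tau^{-1}$, focusing to $e_i$.

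Next I would prove $\tau(\Delta^\circ)\subseteq\mathrm{conv}\{f_0,\dots,f_n\}$ by induction on the dimension $k$ of the faces of $\Delta$, the statement for an edge being the previous paragraph. For a $k$-face $G$, a point $x$ in the relative interior of $G$ that lies in $\Omega$ sits on an open segment $(e_{i_0},p)$ running from a vertex $e_{i_0}$ of $G$ to a point $p$ of the opposite $(k-1)$-face $G'$; since $e_{i_0}$ is extreme this segment is rigid and lies in $\Omega$, so $\tau((e_{i_0},p))$ is a straight segment with one end $f_{i_0}$ and other end $q\in\mathrm{conv}\{f_i:e_i\in G'\}$ by the inductive hypothesis, whence $\tau(x)\in[f_{i_0},q]\subseteq\mathrm{conv}\{f_i:e_i\in G\}$. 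Taking $G=\Delta$ gives $\tau(\Delta^\circ)\subseteq\mathrm{conv}\{f_0,\dots,f_n\}$. Because $\tau(\Delta^\circ)$ is open and nonempty in $\mathbb{R}^n$ by invariance of domain, this hull has nonempty interior, so $f_0,\dots,f_n$ are in general position. As the $f_i$ are extreme points of $\Omega'$, the discussion following the Krein--Milman theorem in section 2 places the open simplex $\Sigma=(f_0,\dots,f_n)$ inside $\Omega'$, and openness of the image upgrades the inclusion to $\tau(\Delta^\circ)\subseteq\Sigma$. Running the identical coning argument for $\tau^{-1}$, whose focusing simplex is now $\Sigma$ with focus points the $e_i$ (already known to be in general position), gives $\tau^{-1}(\Sigma)\subseteq(e_0,\dots,e_n)=\Delta^\circ$, and the two inclusions yield $\tau(\Delta)=\Sigma$. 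This exhibits $\Sigma$ as an extreme $n$-simplex of $\Omega'$ whose vertices are focusing points of $\tau^{-1}$, that is, a focusing $n$-simplex of $\tau^{-1}$, as required.

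I expect the main obstacle to be the coning step in the case where the opposite face $G'$, and hence the point $p$, lies on $\partial\Omega$, where $\tau$ is not defined and the inductive hypothesis cannot be applied to $p$ directly. Here I would identify the far endpoint as the limit $q=\lim_{t\to 1}\tau\big((1-t)e_{i_0}+tp\big)$ and argue that it still lands in $\mathrm{conv}\{f_i:e_i\in G'\}$ by approaching $p$ through interior points and invoking that isometries of complete Hilbert geometries are proper maps---the Hilbert distance to a fixed basepoint blows up at $\partial\Omega$, so interior points converging in $\mathbb{R}^n$ to a boundary point have their images forced to $\partial\Omega'$, which controls the limit. The only genuinely topological input is invariance of domain, used to pass from the inclusion of hulls to general position and to the openness needed to sharpen both inclusions; everything else is the rigidity of lines through extreme points together with the focusing property already in hand.
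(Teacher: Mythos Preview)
Your overall plan---show the $f_i$ are extreme focusing points of $\tau^{-1}$, prove an inclusion by coning, and conclude by symmetry---matches the paper's skeleton. The obstacle you flag is the real gap, and your patch does not close it. It already surfaces in your first paragraph: an edge $(e_i,e_j)$ is a rigid line in $\Omega$ only if it actually lies in $\Omega$, which need not hold for an extreme simplex. More seriously, when a proper face $G'$ of $\Delta$ lies entirely in $\partial\Omega$ your inductive hypothesis on $G'$ is vacuous, so coning from $e_{i_0}$ toward $p\in G'$ gives a rigid segment whose far accumulation point $q\in\partial\Omega'$ you cannot locate: there are no interior points of $G'$ to approximate $p$ by, and properness of $\tau$ only tells you images escape to $\partial\Omega'$, not that they land in $\mathrm{conv}\{f_i:e_i\in G'\}$.

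The paper avoids this by changing the inductive scheme rather than patching it. It fixes a point $p\in\Delta^\circ\subset\Omega$ and builds the chain $(p,e_0)\subset(p,e_0,e_1)\subset\cdots\subset(p,e_0,\dots,e_{n-1})$; at each stage one cones from an extreme vertex $e_k$ over a set whose relative interior is already known to lie in $\Omega$, and the standard fact that $[x,y)\subset\Omega$ whenever $x\in\Omega$ and $y\in\overline{\Omega}$ keeps every stage interior, so $\tau$ is always defined where needed. This yields $\tau((p,e_0,\dots,e_{n-1}))=(\tau(p),f_0,\dots,f_{n-1})$, forcing $f_0,\dots,f_{n-1}$ into general position, and a one-line argument places $f_n$ off their hyperplane. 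The price is that $\tau(p)$, not $f_n$, appears as a vertex, so one does not immediately obtain $\tau(\Delta^\circ)\subset\Sigma$; the paper therefore runs a short halfspace-counting argument to find a single point of $\Delta$ mapping into $\Sigma$, then extends along focusing lines and finishes with the same symmetry you use. Your inclusion-by-symmetry endgame would indeed be cleaner if the face-by-face coning worked, but as written it does not.
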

\begin{proof}
Let $\Delta'$ be the interior of the convex hull of the points $f_0,...,f_n$. By corollary 10.3 the extreme point $f_i$ is a focusing point of $\tau^{-1}$. First we prove that $\Delta'$ is a focusing $n$-simplex of $\tau^{-1}$. Let $p$ be a point in $\Delta$. The image of the $n$-simplex $(p,e_0,...,e_{n-1})$ under $\tau$ is an $n$-simplex with vertices $\tau(p),f_0,...,f_{n-1}$ because the vertices $e_i$ are focusing points of $\tau$. To see this first note that the line segment $(p,e_0)$ is mapped by $\tau$ to the line segment $(\tau(p),f_0)$. Because $f_1$ is a focusing point of $\tau$ we can see that the image of the triangle $(p,e_0,e_1)$ is a triangle $(\tau(p),f_0,f_1)$. We can continue in this manner to show that the image of the $(i+1)$-simplex $(p,e_0,...,e_i)$ is the $(i+1)$-simplex $(p,f_0,...,f_i)$. It follows that the points $f_0,...,f_{n-1}$ are the vertices of an $(n-1)$-simplex. The point $f_n$ does not lie in the $(n-1)$-dimensional hyperplane containing $f_0,...f_{n-1}$ because the line containing $p$ and $e_n$ intersects the interior of the $n$-simplex $[p,e_0,...,e_{n-1}]$. It follows that $\Delta'$ is an open $n$-simplex.

Now we will show that $\tau(\Delta)=\Delta'$. Suppose that none of the points in $\Delta$ are mapped into $\Delta'$ by $\tau$. Let $H_i$  be the $(n-1)$-dimensional hyperplane which contains all of the vertices of $\Delta'$ except for $f_i$ and let $H^+_i$ be the open halfspace determined by $H_i$ which contains $\Delta'$. We can choose a point $p$ in $\Delta$ so that $\tau(p)$ is contained in a maximal number of the halfspaces $H^+_i$. There exists an open set $U$ containing $p$ such that $U\subset\Delta$ so we can also choose $p$ so that $\tau(p)$ does not lie in any of the hyperplanes $H_i$. Without loss of generality we can assume that $\tau(p)$ does not lie in $H^+_0$ and therefore the straight line from $\tau(p)$ to $f_0$ contains a point $q$ that lies in $H^+_0$. If $i\neq0$ and $\tau(p)\in H_i^+$ then $(f_0,\tau(p))$ is contained in $H_i^+$ because $f_0$ is in $\overline{H_i^+}$. Hence $q$ is contained in $H_i^+$ as well. This contradicts the maximality condition of $p$ because $\tau^{-1}(q)$ lies in $\Delta$. Hence at least one point of $\Delta$ is mapped into $\Delta'$ by $\tau$.

Let $p$ be a point in $\Delta$ with $\tau(p)$ in $\Delta'$ and take $U$ to be an open set containing $p$ and lying inside of $\Delta$. Since $\tau(p)$ is in $\Delta'$ we can find an open ball $B$ which contains $\tau(p)$ so that $B\subset\tau(U)\cap\Delta'$. Thus the entire open set $\tau^{-1}(B)$ is contained in $\Delta$. Because the $e_i$ are focusing points of $\tau$ any line in $\Delta$ from $e_i$ through a point in $\tau^{-1}(B)$ is mapped inside of $\Delta'$ by $\tau$. Thus we can keep extending the region inside of $\Delta$ which is mapped by $\tau$ inside of $\Delta'$ to get $\tau(\Delta)\subset\Delta'$. By similar reasoning we get $\tau^{-1}(\Delta')\subset\Delta$ and therefore $\tau(\Delta)=\Delta'$.

\end{proof}

The next lemma guarantees that an isometry which is projective on an open set in a convex domain must be projective on the entire domain.

\begin{lem}
Suppose $\tau:\Omega\rightarrow\Omega'$ is an isometry and there is an open set $U$ in $\Omega$ such that the restriction of $\tau$ to $U$ is given by a projective transformation, then $\tau$ is a projective transformation.
\end{lem}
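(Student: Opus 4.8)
The plan is to show that the closed set $E=\{x\in\Omega:\tau(x)=p(x)\}$, where $p\in PGL_{n+1}(\mathbb{R})$ is the projective transformation with $\tau|_U=p|_U$, is all of $\Omega$; this is exactly the assertion that $\tau$ is projective. It is convenient first to replace $\tau$ by $\sigma=p^{-1}\tau:\Omega\to p^{-1}(\Omega')$, which is again an isometry (the composite of $\tau$ with the projective, hence isometric, map $p^{-1}$) and which restricts to the identity on $U$; then $\tau=p$ on $\Omega$ is equivalent to $\sigma=\mathrm{id}$. Since $\tau$ and $p$ are continuous, $E$ is closed, and it contains the nonempty open set $U$. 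Because $\Omega$ is connected it will be enough to exhibit a large open subset $A\subseteq E$ and show that $A$ is also closed in $\Omega$.

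The engine of the argument is the following propagation principle: if a rigid line $L$ of $\Omega$ meets $A$, then $\tau=p$ on all of $L$. Indeed $L\cap A$ is a nonempty open subinterval of $L$ on which $\tau=p$. By the lemma that isometries carry rigid lines to rigid lines, $\tau(L)$ is a rigid line of $\Omega'$ and is therefore straight; the straight line $p(L)$ and the straight line $\tau(L)$ share the segment $\tau(L\cap A)=p(L\cap A)$, so they coincide. Restricted to $L$, both $\tau$ and $p$ are isometries onto this common line, which with its Hilbert metric is isometric to $\mathbb{R}$, and they agree on a subinterval; two isometries of $\mathbb{R}$ agreeing on an interval are equal, so $\tau=p$ on $L$. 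Since any line one of whose endpoints is an extreme point of $\Omega$ is rigid, and $\Omega$ has extreme points by the Krein-Milman theorem, this shows that for every extreme point $e$ and every $a\in A$ the entire chord $\Omega\cap\overline{ea}$ lies in $A$. In particular $A$ is \emph{star-shaped from every extreme point of $\Omega$}.

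It then remains to prove that an open set $A$ with this last property, meeting $\Omega$, must equal $\Omega$, i.e. that $A$ is closed in $\Omega$. Given $z\in\overline{A}\cap\Omega$, I would choose $n+1$ extreme points of $\Omega$ in general position (Krein-Milman) and a sequence $a_k\in A$ with $a_k\to z$; taking iterated cones from these extreme points over the chords through the $a_k$, each such cone lying in $A$ by the star-shapedness, one fattens the one-dimensional chords into full-dimensional open pieces of $A$ accumulating at $z$, forcing $z$ into the interior $A$. This fattening step is the main obstacle. Rigid lines through a fixed point are \emph{not} dense in direction: by Proposition 3.3 the only rigid lines of a square are those through a vertex, so one cannot in general join $z$ to $A$ by a single rigid line. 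The reason for using several extreme points in general position, whose convex hull is $\overline{\Omega}$, is precisely to surround $z$ from enough directions that the cones over nearby chords of $A$ cover a full neighborhood of $z$; carrying out this covering carefully is where the real work lies.
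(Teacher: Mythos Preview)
Your overall approach---replace $\tau$ by $\sigma=p^{-1}\tau$, propagate the identity along rigid lines through extreme points, and invoke connectedness---is exactly the route the paper takes. But two steps of your argument are not yet justified, and the paper fills both.

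First, your ``propagation principle'' asserts that $p$ restricted to a rigid line $L$ is an isometry onto $\tau(L)$ with its Hilbert metric, so that you can invoke ``two isometries of $\mathbb{R}$ agreeing on an interval are equal.'' But a priori $p$ is only a projective map agreeing with $\tau$ on $U$; for $p|_L$ to be a Hilbert isometry you need $p$ to carry the endpoints of $L$ in $\partial\Omega$ to the endpoints of $\tau(L)$ in $\partial\Omega'$, i.e.\ you need (at least locally) $p(\partial\Omega)=\partial\Omega'$, equivalently $p^{-1}(\Omega')=\Omega$. The paper proves this first, by a cross-ratio argument: for a line through $U$, if $x_1,x_2$ are its endpoints on $\partial(p^{-1}\Omega')$ and $y_1,y_2$ its endpoints on $\partial\Omega$, then $CR(x_1,a,b,x_2)=CR(y_1,a,b,y_2)$ for all $a,b\in U$ on the line (since $\sigma$ is an isometry fixing $a,b$), and for more than finitely many such $b$ this forces $x_i=y_i$. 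Once $p^{-1}(\Omega')=\Omega$ is known, $\sigma$ is a self-isometry of $\Omega$ fixing $U$ pointwise, and your propagation along rigid lines goes through cleanly.

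Second, the ``fattening'' step you flag as the main obstacle is exactly what the paper handles by choosing an extreme $n$-simplex $\Delta$ with $\Delta\cap U\neq\emptyset$ (Krein--Milman gives $n+1$ extreme points in general position, and one can translate them within $\overline\Omega$ so the simplex meets $U$). Lines from each vertex of $\Delta$ sweep out $\Delta$ from any interior point, so repeated coning from the vertices promotes the open set $U\cap\Delta$ to all of $\Delta$, giving $\sigma=\mathrm{id}$ on $\overline\Delta\cap\Omega$; one then iterates the same coning from extreme points to reach all of $\Omega$. This is the concrete replacement for your projected closedness argument, and it avoids needing rigid lines in arbitrary directions.
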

\begin{proof}
Suppose $\tau$ restricted to $U$ is given by the projective transformation $p$. We will prove that $\tau$ must be equal to $p$ on $\Omega$. The map $p^{-1}\tau:\Omega\rightarrow p^{-1}(\Omega')$ is the identity map on $U$. We first show that $p^{-1}(\Omega')=\Omega$. Let $x_1$ be any point in the boundary of $p^{-1}(\Omega')$ and choose a point $x_2$ in the boundary of $p^{-1}(\Omega')$ so that the line $[x_1,x_2]$ passes through the open set $U$. The straight line through $x_1$ and $x_2$ passes through the boundary of $\Omega$ in two points $y_1$ and $y_2$. Fix a point $a$ in $U$ that lies on the line $[x_1,x_2]$ then for any point $b$ in $U$ lying on this same line we have $CR(x_1,a,b,x_2)=CR(y_1,a,b,y_2)$. If $x_1\neq y_1$ then there is at most finitely many choices for $b$ that satisfy this equation. Thus we must have $x_1=y_1$ and by the same reasoning $x_2=y_2$. It follows that $\partial p^{-1}(\Omega')$ is a subset of $\partial\Omega$. By similar reasoning we find that $\partial p^{-1}(\Omega')$ contains $\partial\Omega$ and therefore $p^{-1}(\Omega')=\Omega$.

We now show that $p^{-1}\tau$ is the identity map on $\Omega$. If $e$ is an extreme point of $\Omega$ then $p^{-1}\tau$ is the identity on any line through $e$ which passes through $U$. From the Krein-Milman theorem we can find an extreme $n$-simplex $\Delta$ of $\Omega$ such that $\Delta\cap U\neq\emptyset$. Since $\Delta$ contains an open set on which $p^{-1}\tau$ is the identity it follows that $p^{-1}\tau_{\overline{\Delta}\cap\Omega}$ is the identity. From here one can keep extending the region on which $p^{-1}\tau$ is the identity to all of $\Omega$ so $\tau=p$ on $\Omega$.
\end{proof}

In the proof of lemma 10.7 we will show that if an isometry has a focusing $n$-simplex then it is a projective transformation on this $n$-simplex. Using lemma 10.6 then implies that the isometry is a projective transformation on the entire domain.

In the proof of lemma 10.7 we will need to use the isometry $\Lambda_n:\Delta_n\rightarrow W_n$, see section 5 p. 21 and ~\cite{Harpe} for further details. The definition of $\Lambda_n$ is given by the following
\[\Lambda_n(x_1,...,x_{n+1})=(\theta_1,...,\theta_{n+1})\]

\noindent where
\[ \theta_i=\frac{1}{n+1}(nlog(x_i)-\sum_{j\neq i}log(x_j)). \]

\noindent The $i^{th}$  $axis$ of $W_n$ is the line in $W_n$ defined by $x_i=-nt$ and $x_j=t$ for $j\neq i$. One can check that the family of lines through a vertex of $\Delta_n$ is mapped by $\Lambda_n$ to lines parallel to one of the axis of $W_n$. For $1\leq i\leq n$ let $v_i=(s_1,...,s_{n+1})$ where $s_i=-n$ and $s_j=1$ for $j\neq i$. The vectors $v_i$ form a basis for $W_n$ so we can define a linear isomorphism $T_n:W_n\rightarrow\mathbb{R}^n$ by $T_n(v_i)=e_i$ where $e_i$ is the standard basis vector. One can check that $T_n$ maps lines parallel to the $i^{th}$  $axis$ of $W_n$ to lines parallel to the $i^{th}$  $axis$ of $\mathbb{R}^n$ for $1\leq i\leq n$. Furthermore $T_n$ maps lines parallel to the $(n+1)^{th}$  $axis$ of $W_n$ to lines parallel to the line defined by $t\rightarrow(t,...,t)$ in $\mathbb{R}^n$.

\begin{lem}
If $\tau:\Omega\rightarrow\Omega'$ is an isometry that has a focusing n-simplex $\Delta$, then $\tau$ is a projective transformation.
\end{lem}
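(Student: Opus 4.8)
The plan is to reduce the statement to the normed–space model of the simplex and then to apply Lemma 10.4. First I would record, via Lemma 10.5, that $\tau(\Delta)=\Delta'$ is a focusing $n$-simplex of $\tau^{-1}$ whose vertices $f_0,\dots,f_n$ are the images of the vertices $e_0,\dots,e_n$ of $\Delta$ under the focusing correspondence. Since any two $n$-simplices are projectively equivalent, choose projective transformations $P$ and $Q$ carrying the vertices of the standard simplex $\Delta_n$ to $e_0,\dots,e_n$ and to $f_0,\dots,f_n$ respectively. Then $\sigma=Q^{-1}\tau P\colon\Delta_n\to\Delta_n$ is an isometry, and because projective maps preserve rays and their accumulation points, $\sigma$ focuses the $i$-th vertex of $\Delta_n$ to itself for each $i$. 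By Lemma 10.6 it suffices to prove that $\sigma$ is a projective transformation, for then $\tau=Q\sigma P^{-1}$ is projective on the open set $\Delta\subset\Omega$ and hence on all of $\Omega$. I assume $n\ge 2$ so that Lemma 10.4 applies; the case $n=1$ is elementary, since an orientation-preserving Hilbert isometry of an interval is a translation and hence projective.

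Next I would transport $\sigma$ to $\mathbb{R}^n$. The subgroup of $PGL(\Delta_n)$ represented by positive diagonal matrices acts transitively on $\Delta_n$ by projective transformations that fix every vertex and preserve the orientation of every family of lines through a vertex, so after composing $\sigma$ with a suitable element $g$ of this subgroup I may assume $\sigma'=g\sigma$ fixes the center $c_0$ of $\Delta_n$ while still focusing each vertex to itself. Using de la Harpe's isometry $\Lambda_n\colon\Delta_n\to W_n$ and the linear isomorphism $T_n\colon W_n\to\mathbb{R}^n$ from section 5, set $h=T_n\Lambda_n\sigma'\Lambda_n^{-1}T_n^{-1}\colon\mathbb{R}^n\to\mathbb{R}^n$. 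This map is continuous, being a composition of the continuous isometry $\sigma'$ with homeomorphisms, and $h(0)=0$ because $\Lambda_n(c_0)=0$ and $T_n(0)=0$. Since $\sigma'$ carries the family of lines through the $i$-th vertex to itself, and $\Lambda_n,T_n$ send these families to the lines parallel to the $i$-th coordinate axis (and the last family to the lines parallel to $t\mapsto(t,\dots,t)$), the map $h$ satisfies hypotheses (1)--(3) of Lemma 10.4. Therefore $h(\mathbf{x})=k\mathbf{x}$ for some $k\in\mathbb{R}\setminus\{0\}$.

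It remains to show $k=1$, and this is the crux of the argument. Because $\Lambda_n$ is an isometry and $\sigma'$ is a Hilbert isometry, $h$ is an isometry of $\mathbb{R}^n$ for the norm pushed forward from $W_n$ by $T_n$; as scaling by $k$ multiplies every distance by $|k|$, this forces $|k|=1$, so $k=\pm1$. The value $k=-1$ is exactly the map $\mathbf{x}\mapsto-\mathbf{x}$ on $W_n$, which by the discussion in section 4 reverses the orientation of every family of lines through a vertex; but $\sigma'$ focuses each vertex to itself, i.e. it preserves the orientation toward each vertex, so $k=-1$ is impossible. Hence $k=1$, the map $h$ is the identity, and consequently $\sigma'$, and then $\sigma=g^{-1}\sigma'$, is a projective transformation. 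By the reduction above, $\tau$ is a projective transformation. I expect the main obstacle to be precisely this last step: separating the orientation-preserving isometry $\sigma'$, which must be projective, from the orientation-reversing quadratic transformation $\gamma$ of the simplex, which is where the focusing hypothesis—rather than merely the presence of an extreme $n$-simplex—is indispensable.
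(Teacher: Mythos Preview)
Your argument has a genuine gap at the point where you assert that $\sigma = Q^{-1}\tau P \colon \Delta_n \to \Delta_n$ is a Hilbert isometry. The map $\tau$ is an isometry from $(\Omega, d_\Omega)$ to $(\Omega', d_{\Omega'})$, but the restriction $\tau|_\Delta \colon \Delta \to \Delta'$ is \emph{not} in general an isometry for the intrinsic simplex metrics $d_\Delta$ and $d_{\Delta'}$. For points in $\Delta$ one has $d_\Omega \le d_\Delta$, with strict inequality whenever the chord through the two points leaves $\overline{\Delta}$ before it reaches $\partial\Omega$; this is the generic situation when $\Delta \subsetneq \Omega$. Hence $\sigma$ and $\sigma'$ need not be Hilbert isometries of $\Delta_n$, and your conclusion that $h$ is an isometry of the pushed-forward norm on $\mathbb{R}^n$, and therefore that $|k|=1$, is unsupported.

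This is precisely the delicate point the paper's proof handles. The paper reaches the same relation $f(\mathbf{x}) = k\mathbf{x}$ via Lemma 10.4, but then argues $|k|=1$ asymptotically rather than globally: take a line $L$ through the vertex $e_0$ and sequences $x_n, y_n \in L \cap \Delta$ with $|e_0 - x_n| = 1/n$ and $|e_0 - y_n| = 2/n$. Because $e_0$ is an extreme point of both $\overline{\Delta}$ and $\overline{\Omega}$, the simplex metric and the ambient metric agree in the limit along $L$, so $\lim d_\Delta(x_n,y_n) = \lim d_\Omega(x_n,y_n)$. Since $e_0$ is a focusing point, $\tau(x_n)$ and $\tau(y_n)$ converge along a line to the extreme point $f_0$ of $\Omega'$, and the same limiting coincidence holds on the target side. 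Now $\tau$ \emph{is} an isometry for the ambient metrics, and combining these limits with $d_{\Delta'}(\tau x, \tau y) = |k|\, d_\Delta(x,y)$ forces $|k|=1$. Your orientation argument excluding $k=-1$ is fine once $|k|=1$ is established, but the step producing $|k|=1$ requires this asymptotic comparison of $d_\Delta$ with $d_\Omega$ near a vertex, not the unjustified claim that $\sigma'$ is itself a Hilbert isometry.
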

\begin{proof}
If $\Omega$ is a $n$-simplex then $\Omega'$ must also be an $n$-simplex and so the result can be found in ~\cite{Polyhedral}. Thus we will assume that both $\Omega$ and $\Omega'$ are not $n$-simplicies.

By lemma 10.5 we have $\Delta'=\tau(\Delta)$ is a focusing $n$-simplex of $\tau^{-1}$. Denote the vertices of $\Delta$ by $e_i$ for $i=0,...,n$. We can choose projective transformations $p:\Delta_n\rightarrow\Delta$ and $p':\Delta'\rightarrow \Delta_n$ so that the map $p'\tau|_\Delta p:\Delta_n\rightarrow \Delta_n$ sends straight lines through a given vertex of $\Delta_n$ to lines through the same vertex of $\Delta_n$ with the orientation preserved. Thus the map $\Lambda_n p'\tau|_\Delta p\Lambda_n^{-1}:W_n\rightarrow W_n$ sends lines parallel to a given axis of $W_n$ to lines parallel to the same axis of $W_n$. Denote the metric on $\mathbb{R}^n$ induced by $T_n$ as $d_{T_n}$. Define $f$ to be the map $T_{n}\Lambda_n p'\tau p\Lambda^{-1}_nT_n^{-1}:\mathbb{R}^n\rightarrow \mathbb{R}^n$ composed with a translation so that $f(0)=0$. The map $f$ satisfies the conditions of lemma 10.4 so there exists a constant $k\in\mathbb{R}\setminus\{0\}$ such that $f(\textbf{x})=k\textbf{x}$ for all $\textbf{x}\in\mathbb{R}^n$.

We will now show that $k=1$. The map $f$ has the form $I_2\tau|_\Delta I_1$ where $I_2$ is an isometry from $(\Delta',d_{\Delta'})$ to $(\mathbb{R}^n,d_{T_n})$ and $I_1$ is an isometry from $(\mathbb{R}^n,d_{T_n})$ to $(\Delta,d_{\Delta})$. The metric $d_{T_n}$ on $\mathbb{R}^n$ is induced by a norm thus
\[d_{\Delta'}(\tau|_{\Delta}(x),\tau|_{\Delta}(y))=|k|d_{\Delta}(x,y)\]  for all $x,y\in\Delta$. Let $L$ be a straight line in $\Omega$ which intersects $\Delta$ and passes through the point $e_0$. Choose sequences of points $\{x_n\}$ and $\{y_n\}$, inside of $\Delta$ and lying on $L$, such that $|e_0-x_n|=\frac{1}{n}$ and $|e_0-y_n|=\frac{2}{n}$. From the definition of the Hilbert metric it follows that
\[ \lim_{n\rightarrow\infty}d_\Delta(x_n,y_n)=\lim_{n\rightarrow\infty}d_\Omega(x_n,y_n)=2 \]

\noindent and thus
\[ \lim_{n\rightarrow\infty}d_{\Delta'}(\tau(x_n),\tau(y_n))=\lim_{n\rightarrow\infty}d_{\Omega'}(\tau(x_n),\tau(y_n))=2 \]

\noindent because $\tau$ is an isometry from $\Omega$ to $\Omega'$ and $e_0$ is a focusing point of $\tau$. It follows from the equation above that $|k|=1$ and so $\tau:(\Delta,d_\Delta)\rightarrow(\Delta',d_{\Delta'})$ is an isometry. Because $\Delta$ is a focusing $n$-simplex $\tau|\Delta$ is a projective transformation and it follows from lemma 10.6 that $\tau$ is a projective transformation.
\end{proof}

\begin{lem}
If $\tau:\Omega\rightarrow\Omega'$ is an isometry and $L$ is an extreme line in $\Omega$ then $\tau(L)$ is an extreme line in $\Omega'$.
\end{lem}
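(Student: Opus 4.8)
The plan is to first reduce the statement to showing that the two accumulation points of $\tau(L)$ in $\partial\Omega'$ are extreme, and then to rule out the non-extreme case by a contradiction built from the opposite rigid face machinery of Section 8.

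First I would observe that an extreme line is automatically rigid: since the closure of $L$ meets $\partial\Omega$ in extreme points, these points lie in zero-dimensional faces, so no open boundary segment passes through them and proposition 3.3 forces $[a,b]$ to be the unique geodesic between any $a,b\in L$. By lemma 3.5 the image $\tau(L)$ is then a rigid line in $\Omega'$; in particular $\tau(L)$ is a genuine straight line, and it has two distinct accumulation points $f_1,f_2\in\partial\Omega'$. Writing $e_1,e_2$ for the extreme accumulation points of $L$, with the ray of $L$ toward $e_i$ mapping under $\tau$ to the ray of $\tau(L)$ toward $f_i$, it remains only to prove that $f_1$ and $f_2$ are extreme points of $\Omega'$.

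For the main step I would argue by contradiction. Suppose $f_1$ is not extreme, so it lies in the relative interior $I$ of a face of $\Omega'$ of dimension at least one, and choose $g\in I$ with $g\neq f_1$. Since $\tau(L)$ is a rigid line running from $I$ to the relative interior $I_2$ of the face containing $f_2$, these faces are opposite; hence by lemma 8.3 every line between them is rigid and by lemma 8.2 it lies in $\Omega'$. In particular the line $M'$ through $g$ and $f_2$ is a rigid line in $\Omega'$. Now I apply corollary 8.10 to the isometry $\tau^{-1}$ at each end of $M'$, using that sub-rays of rigid lines are rigid rays. At the $f_2$-end, the ray of $M'$ toward $f_2$ and the ray of $\tau(L)$ toward $f_2$ share the accumulation point $f_2$, so their $\tau^{-1}$-images accumulate in a common face of $\Omega$; since the $\tau^{-1}$-image of the $\tau(L)$-ray is the ray of $L$ toward $e_2$, and $e_2$ is extreme, the image of the $M'$-ray must accumulate at $e_2$ itself. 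Symmetrically, $g$ and $f_1$ lie in the same relative interior $I$, so the $\tau^{-1}$-image of the $M'$-ray toward $g$ accumulates in the same face as $e_1$, which, being extreme, forces accumulation exactly at $e_1$.

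Thus $\tau^{-1}(M')$ is a line whose closure passes through both $e_1$ and $e_2$. Because $e_1\neq e_2$ determine a unique straight line, $\tau^{-1}(M')$ lies on the line through $e_1$ and $e_2$, so $\tau^{-1}(M')=L$ and hence $M'=\tau(L)$. But $M'$ accumulates at $g\neq f_1$ while $\tau(L)$ accumulates at $f_1$, a contradiction; the same argument shows $f_2$ is extreme, so $\tau(L)$ is an extreme line. I expect the main obstacle to be the careful bookkeeping of accumulation points when transporting the two ends of $M'$ through $\tau^{-1}$. The essential point, and the reason the hypothesis requires both endpoints of $L$ to be extreme, is that the extremeness of $e_1$ and of $e_2$ is exactly what collapses each common-face conclusion of corollary 8.10 to a single point, pinning $\tau^{-1}(M')$ back onto $L$.
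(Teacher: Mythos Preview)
Your argument is correct and follows essentially the same route as the paper: observe that $L$ and hence $\tau(L)$ is rigid, assume one endpoint $f_1$ is not extreme, pick a nearby point $g$ in the same face interior, use lemma 8.3 to produce the rigid line $M'$ from $g$ to $f_2$, and then apply corollary 8.10 (together with the extremeness of $e_1,e_2$) to force $\tau^{-1}(M')=L$, contradicting $g\neq f_1$. The paper's proof compresses the two endpoint applications of corollary 8.10 into the single line ``by corollary 8.10 we have $\tau^{-1}((\alpha',\beta))=L$,'' whereas you spell out the bookkeeping at each end explicitly.
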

\begin{proof}
An extreme line is rigid so $\tau(L)$ is a rigid line in $\Omega'$. Let $\alpha, \beta$ be the points where $\overline{\tau(L)}$ intersects $\partial\Omega'$. If $\alpha$ is not an extreme point of $\Omega'$ then there is an open line segment $S\subset\partial\Omega'$ containing $\alpha$. Choose $\alpha'$ in $S$ different from $\alpha$. The line $(\alpha', \beta)$ is rigid in $\Omega'$ by lemma 8.3. By corollary 8.10 we have $\tau^{-1}((\alpha',\beta))=L$. This is a contradiction because $\alpha'$ was chosen different from $\alpha$, thus $\alpha$ is an extreme point of $\Omega'$. Similarly $\beta$ is an extreme point of $\Omega'$ so $\tau(L)$ is an extreme line in $\Omega'$.
\end{proof}

The next theorem is the first main result of this thesis about isometries of the Hilbert metric. The existence of an extreme line in a convex domain means that all isometries of the domain are projective. Our theorem generalizes de la Harpe's theorem on strictly convex domains because strictly convex domains contain extreme lines. A large class of convex domains contain an extreme line. In particular in dimension 2 the only domain without an extreme line is a triangle, thus theorem 8.9 along with de la Harpe's study of the triangle finishes off the study of non projective isometries in this dimension. In dimension 3 the situation is already more complicated because any cone on a two dimensional convex domain does not contain an extreme line. Furthermore the join of a straight line and a rectangle is a 3 dimensional convex domain which does contain a extreme line, however this is a polyhedral domain so its isometries were studied in ~\cite{Polyhedral}.

\begin{thm}
Suppose $\tau:\Omega\rightarrow\Omega'$ is an isometry and $\Omega$ contains an extreme line, then $\tau$ is a projective transformation.
\end{thm}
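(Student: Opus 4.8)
The plan is to reduce the theorem to Lemma 10.7 by exhibiting a \emph{focusing $n$-simplex} of $\tau$, where $n=\dim\Omega$; once such a simplex is produced, Lemma 10.7 immediately gives that $\tau$ is projective. Let $L$ be the given extreme line, with accumulation points $e_1,e_2\in\partial\Omega$, both extreme points of $\Omega$. By Lemma 10.8 the image $\tau(L)$ is again an extreme line. Taking a ray $r\subset L$ with $a(r)=e_1$, its image $\tau(r)$ is a ray of the extreme line $\tau(L)$, so $a(\tau(r))$ is an extreme point of $\Omega'$, and Lemma 10.2 shows that $e_1$ is a focusing point of $\tau$; likewise $e_2$. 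The same reasoning gives the general fact I will use repeatedly: every extreme point of $\Omega$ that lies on some extreme line is a focusing point of $\tau$, because that extreme line maps to an extreme line by Lemma 10.8 and Lemma 10.2 applies.

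It remains to find $n+1$ focusing points in general position. Since the relative interior of the convex hull of any affinely independent set of extreme points lies in $\Omega$, such a set is automatically an extreme $n$-simplex, and if all of its vertices are focusing it is a focusing $n$-simplex. Hence it suffices to prove the spanning statement that the extreme points of $\Omega$ lying on extreme lines are not all contained in a single hyperplane. I would prove this by induction on $\dim\Omega$. The base case $\dim\Omega=2$ is the observation that a planar domain with an extreme line is not a triangle: arguing along the boundary arc cut off by $L$ on the side away from a third extreme point, one finds an extreme point off the line through $e_1,e_2$ that again lies on an extreme line, giving three focusing points in general position.

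For the inductive step suppose, toward a contradiction, that all extreme points of $\Omega$ lying on extreme lines are contained in a hyperplane $H$. Because $L$ meets the interior of $\Omega$, the plane $H\supset L$ is not a supporting hyperplane, so $\Omega$ has extreme points strictly on both sides of $H$; choose such a point $z\notin H$. Then $z$ lies on no extreme line, so $(z,e_1)$ and $(z,e_2)$ lie in $\partial\Omega$ and $[e_1,e_2,z]$ is an extreme triangle. Let $D$ be the $2$-dimensional cross section cut out by the plane through $e_1,e_2,z$; by Corollary 9.5 it is rigid under $\tau$, and by Lemma 9.3 the restriction $\tau|_D$ is a Hilbert isometry of a planar domain containing the extreme line $L$. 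The inductive hypothesis in dimension $2$ makes $\tau|_D$ projective, and I would then use this, together with Lemmas 10.2 and 10.8 applied inside $\Omega$, to force $z$ itself to be a focusing point of $\tau$, contradicting $z\notin H$.

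The hard part is precisely this last inference. Concluding that $z$ is focusing for the ambient isometry $\tau$ requires producing a ray to $z$ whose $\tau$-image converges to an extreme point of $\Omega'$ (Lemma 10.2), whereas projectivity of $\tau|_D$ only controls extremeness inside the cross section $D'=\tau(D)$, and a point extreme in a cross section need not be extreme in the ambient domain. Bridging this gap, transferring extremeness and the focusing property between the planar slice and the full domains $\Omega$ and $\Omega'$ (presumably by also running the argument for $\tau^{-1}$ on $\Omega'$ and comparing the two far boundary arcs), is where the real content of the theorem lies; once a focusing $n$-simplex is in hand the conclusion is a routine application of Lemma 10.7.
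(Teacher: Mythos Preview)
Your overall strategy---produce a focusing $n$-simplex and invoke Lemma~10.7---is exactly the paper's, and your opening (Lemma~10.8 gives that $e_1,e_2$ are focusing) is correct. But the contradiction you set up in the inductive step does not close: you assume that every extreme point lying on an extreme line is in a hyperplane $H$, pick an extreme $z\notin H$, and aim to show $z$ is a focusing point of $\tau$. Even if you succeed, this does \emph{not} contradict $z\notin H$; your hypothesis concerns only extreme points on extreme lines, and $z$ becoming a focusing point does not put it on an extreme line. The ``spanning'' detour is therefore both unnecessary and logically broken. The paper simply proves the stronger statement that \emph{every} extreme point $e_3$ of $\Omega$ is a focusing point of $\tau$, which immediately yields a focusing $n$-simplex via Krein--Milman.

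As for the gap you correctly isolate---passing from ``$\tau|_D$ is projective, so the image of $e_3$ is extreme in the slice $D'$'' to ``the image is extreme in $\Omega'$''---the paper bridges it as follows. Assume $a(\tau(r))$ is not extreme in $\Omega'$ and pick $\alpha\neq a(\tau(r))$ in an open boundary segment $S\ni a(\tau(r))$. Since $(e_1,e_3),(e_2,e_3)\subset\partial\Omega$, Corollary~8.10 forces $(f_1,\alpha),(f_2,\alpha)\subset\partial\Omega'$, so $\alpha$ is an extreme point of the $2$-dimensional cross section $P'\cap\Omega'$ determined by $f_1,f_2,\alpha$. Now use that $f_1,f_2$ are focusing points of $\tau^{-1}$ (Corollary~10.3): this makes $\tau^{-1}$ of the triangle $(f_1,f_2,\alpha)$ planar. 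A ray $r'$ from $\tau(x)$ to $\alpha$ is rigid in $P'\cap\Omega'$, hence $[\tau(x),y]$ is the unique geodesic there for any $y\in r'$; since $\tau([x,\tau^{-1}(y)])$ is another geodesic in that slice, it must equal $[\tau(x),y]$, so $\tau^{-1}(r')$ is a genuine ray in $\Omega$. Corollary~8.10 then gives $a(\tau^{-1}(r'))=e_3$, whence $\tau^{-1}(r')=r$ and $\alpha=a(\tau(r))$, a contradiction. The point you were missing is that the argument must be run in $\Omega'$ using the \emph{already established} focusing points $f_1,f_2$ of $\tau^{-1}$, not by comparing boundary arcs.
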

\begin{proof}
Let $L$ be an extreme line in $\Omega$ which converges to extreme points $e_1$ and $e_2$ in the boundary of $\Omega$. From lemma 10.8 it follows that $e_1$ and $e_2$ are focusing points of $\tau$. We will use $f_1$ and $f_2$ to denote the extreme points of $\Omega'$ that $e_1$ and $e_2$ focus to under $\tau$. Suppose $e_3$ is another extreme point of $\Omega$. We will show that $e_3$ is a focusing point of $\tau$.

If at least one of the open line segments $(e_1,e_3)$ or $(e_2,e_3)$ is in $\Omega$ then it follows from lemma 10.2 and lemma 10.8 that $e_3$ is a focusing point of $\tau$ so we will assume that both $(e_1,e_3)$ and $(e_2,e_3)$ are contained in $\partial\Omega$. Let $r$ be a rigid ray that begins at some point $x$ on $L$ and converges to $e_3$. Because $(e_1,e_3)$ and $(e_2,e_3)$ are in $\partial\Omega$ it follows that $(f_1,a(\tau(r)))$ and $(f_2,a(\tau(r)))$ are in $\partial\Omega'$.

We will first consider the case when $\Omega$ and $\Omega'$ are 2 dimensional convex domains. Because we are in dimension 2 the extreme line $\tau(L)$ divides $\Omega'$ in to two halves. Since $(f_1,a(\tau(r)))$ and $(f_2,a(\tau(r)))$ are in $\partial\Omega'$ it follows immediately that $a(\tau(r))$ is an extreme point of $\Omega'$ so $e_3$ is a focusing point of $\tau$ by lemma 10.2.

Now assume that $\Omega$ and $\Omega'$ are convex domains of dimension greater than 2. Suppose that $a(\tau(r))$ is not an extreme point of $\Omega'$, then there exists an open line segment $S\subset\partial\Omega'$ which contains $a(\tau(r))$. If $P$ is the 2-dimensional plane containing $e_1,e_2$ and $e_3$ then D=$P\cap\Omega$ is a cross section of $\Omega$ that is rigid under $\tau$, see corollary 9.5. Hence $\tau(D)$ is a 2-dimensional cross section of $\Omega'$. From the 2-dimensional case we know that $a(\tau(r))$ is an extreme point of $\tau(D)$ so the segment $S$ only intersects $\partial\tau(D)$ in the single point $a(\tau(r))$.

Let $\alpha$ be a point in $S$ different from $a(\tau(r))$ and let $\Delta$ be the interior of the triangle with vertices $f_1,f_2,\alpha$, see figure 11. Note that $\alpha$ and $a(\tau(r))$ lie in the relative interior of the same face of $\Omega'$ by lemma 8.1. Let $P'$ be the 2-dimensional plane containing $\Delta$. The point $\alpha$ is an extreme point of $P'\cap\Omega'$ because $(f_1,\alpha)$ and $(f_2,\alpha)$ are contained in $\partial\Omega'$. Thus if $r'$ is a ray starting at $\tau(x)$ which converges to $\alpha$ then it is rigid in $P'\cap\Omega'$. Because $f_1$ and $f_2$ are focusing points of $\tau^{-1}$ it follows that $\tau^{-1}(\Delta)$ is contained in a 2-dimensional plane. Let $y$ be a point on $r'$. Then $[\tau(x),y]$ is the unique geodesic between $\tau(x)$ and $y$ in $P'\cap\Omega'$. It follows that $\tau^{-1}([\tau(x),y])=[x,\tau^{-1}(y)]$ because $\tau([x,\tau^{-1}(y)])$ is a geodesic in $P'\cap\Omega'$ from $\tau(x)$ to $y$ and therefore must be equal to $[\tau(x),y]$. By moving $y$ towards $\alpha$ we can see that $\tau^{-1}(r')$ is a ray in $\Omega$. From corollary 8.10 we find that $a(\tau^{-1}(r'))=e_3$ and thus $r=\tau^{-1}(r')$. This is a contradiction so $a(\tau(r))$ is an extreme point of $\Omega'$ and thus $e_3$ is a focusing point of $\tau$.

We have shown that any extreme point of $\Omega$ is a focusing point of $\tau$. Thus $\tau$ has a focusing $n$-simplex so it is a projective transformation from lemma 10.7.
\end{proof}


\begin{figure}[h]
    \centering
    \def\svgwidth{\columnwidth}
    \begin{overpic}[]{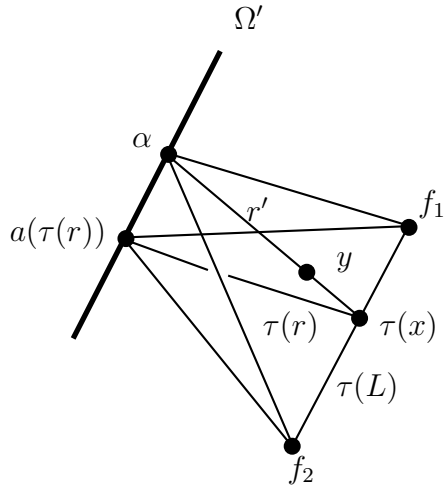}
        \put (40,105) {$\Omega'$}
        \put (-15,53) {$a(\tau(r))$}
        \put (15,76) {$\alpha$}
        \put (85,60) {$f_1$}
        \put (53,-5) {$f_2$}
        \put (75.5,30) {$\tau(x)$}
        \put (65,47) {$y$}
        \put (43,58) {$r'$}
        \put (65,15) {$\tau(L)$}
        \put (47,30) {$\tau(r)$}
    \end{overpic}
    \vspace{3mm}
    \caption{Theorem 10.9}
\end{figure}

\section{Isometries of the Hilbert Metric in $\mathbb{R}^2$}

In this section we will show that the only 2 dimensional convex domain that has an isometry which is not a projective transformations is the triangle. We have seen that in higher dimensions there are domains besides the $n$-simplex for which this is true. This makes the higher dimensional case more complicated than the 2 dimensional case. The main reason for this difference is that in two dimensions the only convex domain that does not contain an extreme line is the interior of a triangle whereas in higher dimensions there are more examples of such domains. We prove the following theorem which was stated at Theorem 1.4 in the introduction.

\begin{thm}
Suppose $\Omega$ and $\Omega'$ are the interiors of compact convex sets in $\mathbb{R}^2\subset\mathbb{R}P^2$ which are isometric. Then the following statements hold:

\begin{enumerate}

\item If $\Omega$ is not the interior of a triangle then $\Omega'$ is not the interior of a triangle and $Isom(\Omega, \Omega')=PGL(\Omega,\Omega')$.

\item (de la Harpe) If $\Omega$ is the interior of a triangle then $\Omega'$ is the interior of a triangle. Furthermore if $\Omega=\Omega'$ then $Isom(\Omega)\cong\mathbb{R}^2\rtimes D_6$ and $PGL(\Omega)\cong\mathbb{R}^2\rtimes D_3$ where $D_n$ is the dihedral group of order $2n$.

\item $\Omega$ and $\Omega'$ are isometric if and only if they are projectively equivalent.

\end{enumerate}

\end{thm}
\begin{proof}
Suppose $\tau:\Omega\rightarrow\Omega'$ is an isometry of open bounded convex sets in $\mathbb{R}^2$. If $\Omega$ is not the interior of a triangle then $\Omega$ contains an extreme line and the image of this line under $\tau$ is an extreme line in $\Omega'$ by lemma 10.8. Thus it follows that $\Omega'$ is not the interior of a triangle. Furthermore theorem 10.9 implies that $\tau$ is a projective transformation, so statement 1 holds.

If $\Omega$ is the interior of a triangle then it follow again from lemma 10.8 that $\Omega'$ is the interior of a triangle. If $\Omega=\Omega'$ it follows from proposition 4 and the corollary on pg 112 in ~\cite{Harpe} that $Isom(\Omega)\cong\mathbb{R}^2\rtimes D_6$ and $PGL(\Omega)\cong\mathbb{R}^2\rtimes D_3$, so statement 2 holds.

If $\tau:\Omega\rightarrow\Omega'$ is an isometry which is not projective then theorem 10.9 implies that $\Omega$ and $\Omega'$ do not contain extreme lines. Thus $\Omega$ and $\Omega'$ are triangles and are therefore projectively equivalent.

\end{proof}

\section{Minimal Cones}

In this section we will give the definition of minimal cone in a convex domain $\Omega$ and prove some properties about them which will be used to study the isometries of the Hilbert metric on general convex domains. Minimal cones are useful because they allow us to use induction when studying an isometry.

Suppose $e$ is an extreme point of a convex domain $\Omega$ and $I$ is the relative interior of a face of $\Omega$ which is opposite to $e$. Then the join of $e$ and $I$ is a cone with dimension of $dim(I)+1$ that is contained in $\Omega$ which we will denoted by $C_e(I)$. The point $e$ is a \textit{vertex} of $C_e(I)$ and $I$ is a \textit{base} of $C_e(I)$. The cone $C_e(I)$ is \textit{minimal} if its relative boundary is contained in the boundary of $\Omega$.

\begin{lem}
If $C_e(I)$ is a minimal cone in $\Omega$ then $C_e(I)$ is a cross section of $\Omega$ of dimension $dim(I)+1$. Furthermore if $\tau:\Omega\rightarrow\Omega'$ is an isometry then $C_e(I)$ is rigid under $\tau$.
\end{lem}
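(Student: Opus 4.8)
The plan is to prove the two assertions in turn, in each case first identifying the relevant affine subspace and then upgrading a containment to an equality by an open-and-closed (connectedness) argument. For the first assertion I would set $P=\mathrm{aff}(C_e(I))$, an affine subspace of dimension $\dim(I)+1=:m$, the equality of dimensions coming from $e\notin\mathrm{aff}(I)$ (which is forced by the stated dimension $\dim C_e(I)=\dim(I)+1$). I will assume $\dim(I)\geq 1$, so that $m\geq 2$ and $D:=P\cap\Omega$ is a genuine cross section; the degenerate case $\dim(I)=0$ makes $C_e(I)$ an extreme line, whose image under $\tau$ is a rigid line by the rigid-line results of Section 3. The cone $C_e(I)$, being the open join of the point $e$ and the relatively open convex set $I$, is relatively open in $P$, and it is contained in $D$, which by Lemma 7.2 is an $m$-dimensional convex domain in $P$; hence $C_e(I)$ is open in $D$. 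To see it is also closed in $D$, I would invoke minimality: the relative boundary of $C_e(I)$ (its boundary in $P$) lies in $\partial\Omega$ by hypothesis, so it is disjoint from $D\subset\Omega$, and thus no point of $D$ lies in $\overline{C_e(I)}\setminus C_e(I)$. Since $D$ is convex, hence connected, and $C_e(I)$ is nonempty and both open and closed in $D$, I conclude $C_e(I)=D=P\cap\Omega$, a cross section of dimension $m$.

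For the second assertion the first observation is that $e$ and $I$ are opposite rigid faces: every line from the extreme point $e$ to $I$ is rigid by the criterion of Proposition 3.3. Theorem 8.11 then applies and gives $\tau(C_e(I))=J'$, where $J'$ is the join of two opposite rigid faces $I_1',I_2'$ of $\Omega'$; in particular $J'$ is convex by Lemma 8.4. Using $D=C_e(I)=P\cap\Omega$ from the first part, I note that $D$ is relatively closed in $\Omega$ (because $P$ is closed in $\mathbb{R}^n$), so, $\tau$ being a homeomorphism for the standard topologies, $J'=\tau(D)$ is relatively closed in $\Omega'$. On the other hand $J'=\tau(D)$ is homeomorphic to the $m$-manifold $D$; being convex and carrying no topological boundary, it must be relatively open in $P':=\mathrm{aff}(J')$ with $\dim P'=m$ (this is where invariance of domain enters, fixing $\dim J'=m$). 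Setting $D'=P'\cap\Omega'$, the set $J'$ is then open in $D'$ (it is open in $P'$ and $J'\subset D'$) and closed in $D'$ (it is closed in $\Omega'\supset D'$); as $D'$ is a connected cross section and $J'$ is nonempty, $J'=D'=P'\cap\Omega'$. Hence $\tau(C_e(I))$ is a cross section of $\Omega'$, i.e. $C_e(I)$ is rigid under $\tau$ in the sense of Definition 9.1.

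I expect the main obstacle to be the step asserting that $J'=\tau(C_e(I))$ lies in an affine subspace of dimension exactly $m$. A Hilbert isometry can be genuinely nonprojective (for instance the quadratic map on the triangle), so there is no reason a priori for the image of a flat cross section to be flat again. The argument must therefore combine two independent inputs: Theorem 8.11, which pins down $\tau(C_e(I))$ as a convex join, and invariance of domain, which fixes its dimension; convexity together with the absence of a topological boundary is precisely what forces the image to be relatively open in an $m$-dimensional flat. Once this is secured, the transfer of the closedness of $C_e(I)$ through the homeomorphism $\tau$ is what promotes the resulting open inclusion $J'\subset D'$ to the equality $\tau(C_e(I))=P'\cap\Omega'$.
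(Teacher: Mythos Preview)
Your proof is correct and follows essentially the same route as the paper. Both parts rest on the same ingredients: for the first, minimality forces $C_e(I)=\mathrm{aff}(C_e(I))\cap\Omega$ via a connectedness argument; for the second, Theorem 8.11 identifies $\tau(C_e(I))$ as a convex join $J'$, the minimality of $C_e(I)$ is transferred through the homeomorphism $\tau$ (the paper pulls back relative-boundary points of $J'$ via $\tau^{-1}$ to contradict minimality, you equivalently push forward the closedness of $C_e(I)$ in $\Omega$), and then the same open-and-closed reasoning together with invariance of domain for the dimension count finishes.
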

\begin{proof}
Define $m=dim(I)+1$. Let $P$ be the $m$-dimensional affine subspace in $\mathbb{R}^n$ which contains $C_e(I)$. Then we claim that $C_e(I)=P\cap\Omega$. Clearly we have $C_e(I)\subset P\cap\Omega$. So let $x$ be a point in $P\cap\Omega$ that is not in $C_e(I)$ and $y$ be a point in $C_e(I)$. Then $[x,y]$ is contained in $P\cap\Omega$ and so some point $z$ in $[x,y)$ is in the relative boundary of $C_e(I)$. Since $C_e(I)$ is a minimal cone it follows that $z$ is in $\partial\Omega$. This is a contradiction because $z$ is also in $P\cap\Omega$. We have shown that $C_e(I)$ is a cross section of $\Omega$ with dimension of $m$.

By theorem 8.11, $\tau(C_e(I))=J'$ where $J'$ is the join of the relative interiors $I_1'$ and $I_2'$ of two proper faces of $\Omega'$. Let $\alpha$ be a point in the relative boundary of $J'$ and $x$ be a point in $J'$. If $\alpha$ is in $\Omega'$ then $\tau^{-1}([\alpha,x])$ is in $\Omega$. But $(\alpha,x]$ is in $J'$ so $\tau^{-1}((\alpha,x])$ is in $C_e(I)$. Hence $\tau^{-1}(\alpha)$ is an accumulation point of $C_e(I)$ and is therefore in $\partial C_e(I)$. This contradicts the fact that $C_e(I)$ is minimal. It follows that the relative boundary of $J'$ is contained in $\partial\Omega'$. Using a similar argument as before this means that $J'$ is a cross section of $\Omega'$ with dimension $m$.
\end{proof}

\begin{lem}
If $e$ is an extreme point of a convex domain $\Omega$ then $\Omega$ contains a minimal cone with vertex $e$.
\end{lem}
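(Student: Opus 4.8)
The plan is to produce the required cone by a minimal-dimension selection among the faces of $\overline{\Omega}$ that are visible from $e$ through the interior. First I would record the family
\[ \mathcal{G}=\{F : F \text{ is a face of } \overline{\Omega},\ \exists\, \alpha\in \text{relint}(F) \text{ with } (e,\alpha)\subset\Omega\}. \]
This family is nonempty: choosing any interior point $p\in\Omega$, the straight line through $e$ and $p$ leaves $\overline{\Omega}$ at a second point $\alpha$, the open segment $(e,\alpha)$ lies in $\Omega$ because it contains $p$, and $\alpha$ lies in the relative interior of exactly one face by Corollary 5.7 of ~\cite{Convex}. Among all members of $\mathcal{G}$ I would pick one, $F^{*}$, of \emph{minimal} dimension, set $I^{*}=\text{relint}(F^{*})$, and propose $C_{e}(I^{*})$ as the desired minimal cone. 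The face $I^{*}$ is opposite to $e$ since the chord $(e,\alpha)$ witnessing membership in $\mathcal{G}$ is a line in $\Omega$ from $e$ to $I^{*}$, and $C_{e}(I^{*})\subset\Omega$ by lemma 8.2; moreover $e\notin \text{aff}(F^{*})$, since otherwise $[e,\alpha]\subset \text{aff}(F^{*})\cap\overline{\Omega}=F^{*}\subset\partial\Omega$ would contradict $(e,\alpha)\subset\Omega$, so the cone genuinely has dimension $dim(I^{*})+1$.

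The heart of the argument is to show $C_{e}(I^{*})$ is minimal, that is, that its relative boundary lies in $\partial\Omega$. Writing the closed cone as $\{(1-t)e+ty : y\in \overline{F^{*}},\, t\in[0,1]\}$, its relative boundary splits into the apex $e$, the closed base $\overline{F^{*}}$, and the lateral part $\bigcup_{y}(e,y)$ where $y$ ranges over the relative boundary of $F^{*}$. The apex and base already lie in $\partial\Omega$, so everything reduces to showing $(e,y)\subset\partial\Omega$ for every such $y$. I would argue by contradiction: if some $y_{0}$ in the relative boundary of $F^{*}$ admitted a point $w\in(e,y_{0})\cap\Omega$, then the maximal segment of $\overline{\Omega}$ along the line through $e$ and $y_{0}$ contains the interior point $w$, so its two endpoints lie in $\partial\Omega$ with the open segment inside $\Omega$. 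Since $e$ is extreme it cannot lie in that open segment and must be one endpoint, and since $y_{0}\in\partial\Omega$ lies beyond $w$ on the ray from $e$, it is forced to be the other endpoint; hence in fact $(e,y_{0})\subset\Omega$. But then, letting $F'$ be the smallest face of $\overline{\Omega}$ containing $y_{0}$, Theorem 5.6 of ~\cite{Convex} gives $F'\subset F^{*}$, and since $y_{0}\notin I^{*}$ this containment is proper, so $dim(F')<dim(F^{*})$. As $(e,y_{0})\subset\Omega$ and $y_{0}\in\text{relint}(F')$, we get $F'\in\mathcal{G}$, contradicting the minimality of $dim(F^{*})$. This contradiction shows the laterals lie in $\partial\Omega$, so $C_{e}(I^{*})$ is minimal.

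The routine facts I would invoke along the way are that the relative interiors of faces partition $\overline{\Omega}$ and that each point lies in the relative interior of a unique smallest face (Corollary 5.7 and Theorem 5.6 of ~\cite{Convex}); that $\text{aff}(F)\cap\overline{\Omega}=F$ for any face $F$; and that an extreme point cannot lie in the open part of an interior chord. The main obstacle is precisely the minimality step of the second paragraph: the key point is that a failure of minimality of the cone does not merely produce \emph{some} other reachable face but forces one of \emph{strictly smaller} dimension, which is exactly what the minimal choice of $dim(F^{*})$ forbids. Everything else is bookkeeping about joins, opposite faces, and relative boundaries already established in Section 8.
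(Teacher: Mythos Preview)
Your proof is correct and follows essentially the same approach as the paper: both argue that if $C_e(I)$ fails to be minimal then a lateral boundary point yields a strictly lower-dimensional face of $\overline{\Omega}$ still opposite to $e$. The only cosmetic difference is that the paper phrases this as an iterative descent (replace $I$ by the smaller $I'$ and repeat) while you select a face of minimal dimension up front and derive a contradiction; these are equivalent formulations of the same dimension-drop argument.
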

\begin{proof}
Let $F_I$ be a face of $\Omega$ whose relative interior $I$ is opposite to $e$. If $C_e(I)$ is minimal then we are done.

If not then some point in the relative boundary of $C_e(I)$ is contained in $\Omega$. Let $L$ be the line through this point and $e$. The line $L$ intersects $F_I$ in a single point $\alpha$ which does not lie in $I$. Let $F'$ be the face in $F_I$ whose relative interior contains $\alpha$. It follows from Theorem 5.2 and Corollary 5.5 in ~\cite{Convex} that $F'$ is a face of $\Omega$ with $dim(F') < dim(F_I)$. The relative interior of $F'$ is the same in $F_I$ and $\Omega$ so if $I'$ is the relative interior of $F'$ then $C_e(I')$ is contained in $\Omega$. Since we have lowered the dimension eventually we will get a minimal cone with vertex $e$.
\end{proof}

The next lemma guarantees that any convex domain which is not a $n$-simplex has a minimal cone of dimension less than the dimension of the domain. We will use induction and these minimal cones of smaller dimension to understand how families of lines through an extreme point are mapped by an isometry of the Hilbert metric.

\begin{lem}
Suppose $\Omega$ is a convex domain that contains no minimal cone of dimension less than $n$, then $\Omega$ is an n-simplex.
\end{lem}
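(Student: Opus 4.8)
The plan is to show that the hypothesis forces $\Omega$ to be a cone with apex $e$ for \emph{every} extreme point $e$, and then to extract from this an affine-independence statement about the extreme points that pins $\overline{\Omega}$ down as a simplex.

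First I would fix an extreme point $e$ of $\Omega$. By Lemma 12.2 there is a minimal cone $C_e(I)$ with vertex $e$. Its dimension is $\dim(I)+1$, and since $I$ is the relative interior of a proper face opposite to $e$ we have $\dim(I)\le n-1$, so the cone has dimension at most $n$. The standing hypothesis forbids minimal cones of dimension less than $n$, so in fact $\dim C_e(I)=n$, which forces $\dim(I)=n-1$, i.e. $I$ is the relative interior of a facet. By Lemma 12.1 the minimal cone $C_e(I)$ is then an $n$-dimensional cross section of $\Omega$, and the only $n$-dimensional cross section of an $n$-dimensional domain is $\Omega$ itself. Hence $\Omega=C_e(I)$ is a cone with apex $e$ over the facet $F_e:=\overline{I}$.

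Next I would read off the consequence for the extreme points. Writing $\overline{\Omega}=\mathrm{conv}(\{e\}\cup F_e)$, the extreme points of $\overline{\Omega}$ are exactly $e$ together with the extreme points of the facet $F_e$. In particular every extreme point of $\Omega$ other than $e$ lies in $F_e$, hence in the supporting hyperplane $H_e$ carrying the facet $F_e$; moreover $e\notin H_e$, for otherwise $\overline{\Omega}\subset H_e$ would be only $(n-1)$-dimensional, contradicting $\dim\Omega=n$. Since this holds for every choice of extreme point $e$, each extreme point fails to lie in the affine hull of the remaining ones, because that hull is contained in $H_e$. Therefore the set $V$ of extreme points of $\Omega$ is affinely independent. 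Because $\dim\Omega=n$, the Krein--Milman theorem guarantees $|V|\ge n+1$, while affine independence in $\mathbb{R}^n$ forces $|V|\le n+1$. Thus $V$ consists of exactly $n+1$ affinely independent points, and $\overline{\Omega}=\mathrm{conv}(V)$ is an $n$-simplex, as required.

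I expect the only delicate point to be the passage in the first step from $C_e(I)=\Omega$ to ``$\Omega$ is a cone over a facet with apex off the facet hyperplane'', since it is exactly this picture that makes the clean affine-independence argument available. It is tempting instead to slice $\Omega$ by the facet $F_e$ and induct on dimension, but this runs into an obstruction: a proper face $G$ of $F_e$ is not opposite to any extreme point of $\Omega$ (every segment joining two points of the convex set $F_e\subset\partial\Omega$ stays inside that boundary facet, and the segment from the apex $e$ to $\mathrm{relint}(G)$ lies in the boundary of the cone), so a low-dimensional minimal cone of $F_e$ does not obviously cone up to a low-dimensional minimal cone of $\Omega$. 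Avoiding this issue is precisely why I would argue directly via the supporting hyperplanes $H_e$ rather than by induction on the dimension.
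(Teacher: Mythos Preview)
Your proof is correct and takes a genuinely different route from the paper. The paper argues by induction on the dimension: having observed that $\Omega$ must itself be a cone with some apex $v$ and base $B$, it applies the induction hypothesis to $B$ to find a small minimal cone $C_e(I)$ inside $B$, and then joins $I$ with the apex $v$ to obtain a face $I'$ of $\Omega$ for which $C_e(I')$ has dimension less than $n$. Your argument instead exploits the freedom in the choice of extreme point: running the first step for \emph{every} extreme point $e$ shows that $\Omega$ is simultaneously a cone over a facet from each of its extreme points, which forces the extreme points to be affinely independent and hence exactly $n+1$ in number. This is slicker and avoids induction altogether. Incidentally, the inductive mechanism you flag as problematic in your final paragraph is precisely what the paper carries out, and it does go through once one cones the low-dimensional face $I$ of the base with the global apex $v$ rather than trying to lift the base's minimal cone verbatim.
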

\begin{proof}
If $\Omega$ is not a cone then the lemma follows immediately because every minimal cone in $\Omega$ has dimension less than $n$. Thus we will assume that $\Omega$ is a cone.

The proof will use induction on the dimension of $\Omega$. If $n=2$ then the result is trivial because the only 2-dimensional convex cone is a triangle. Thus we will assume that $n>2$ and the statement of the lemma holds for dimension $n-1$.

Let $v$ and $B$ be a vertex and base of $\Omega$, so that $\Omega$ is the join of $v$ and $B$. If $B$ is an $(n-1)$-simplex then we are done. Otherwise by induction $B$ contains a minimal cone of dimension less than $n-1$. Let $C_e(I)$ be a minimal cone in $B$ of dimension less than $n-1$. Since $e$ is an extreme point of $B$ it is also an extreme point of $\Omega$. Let $I'$ denote the join of $v$ and $I$. Since $I$ is the relative interior of a face of $B$, $I'$ is the relative interior of a face of $\Omega$ with dimension less than $n-1$.

Because any line from $e$ to $I$ is in $B$ it follows that any line from $e$ to $I'$ is in $\Omega$. Hence $C_e(I')$ is a cone in $\Omega$ with dimension less than $n$. If $C_e(I')$ is minimal then we are done otherwise we keep decreasing the dimension until we get a minimal cone in $\Omega$ of dimension less than $n$.
\end{proof}

\section{More on Focusing Points}

We have already seen that an isometry with a focusing $n$-simplex is a projective transformation. In this section we will prove a theorem that generalizes this idea by showing that an isometry with a focusing point is projective. This is because every extreme point of $\Omega$ must then be a focusing point and therefore $\Omega$ contains a focusing $n$-simplex. Before we prove theorem 13.2 we need the following result about convex domains in dimension 3.

\begin{lem}
If $\Omega$ is a 3-dimensional convex domain which does not contain an extreme line then $\Omega$ is either the join of a line with a rectangle or $\Omega$ is a cone.
\end{lem}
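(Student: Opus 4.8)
The plan is to dispose of the cone case first and then show that every remaining domain is a triangular prism. If $\Omega$ is a cone we are done, so assume it is not. By Lemma 12.2 every extreme point of $\Omega$ is the vertex of a minimal cone, while by Lemma 12.3 a domain with no minimal cone of dimension less than $3$ is a $3$-simplex, hence a cone. So $\Omega$, not being a cone, has a minimal cone of dimension at most $2$. A minimal cone of dimension $1$ is the segment from its vertex to an opposite extreme point, which is exactly an extreme line; as $\Omega$ has none, the minimal cone has dimension exactly $2$. By Lemma 12.1 it is a cross-section, so it is an open triangle $(e,a,b)$ whose relative boundary $[e,a]\cup[a,b]\cup[e,b]$ lies in $\partial\Omega$, with $e,a,b$ extreme points of $\Omega$ and $[a,b]$ a $1$-face opposite to $e$.

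Next I would locate a $1$-face $E$ opposite to a $2$-face $Q$; these will be the ridge and base of the prism. The relative interiors of the faces of $\Omega$ partition $\partial\Omega$, and the lines issuing from $e$ into $\Omega$ sweep out a $2$-dimensional portion of $\partial\Omega$ which cannot be absorbed by the $1$-skeleton; hence some such line terminates in the relative interior of a $2$-face $Q$, i.e. $e$ is opposite to $Q$. Since $\Omega$ is not a cone, the pyramid $C_e(\mathrm{int}\,Q)$ is a proper subset of $\Omega$, and I would use this together with the minimal triangle to produce a $1$-face $E$ through $e$ that is opposite to $Q$. Granting this, Lemma 8.2 and Lemma 8.4 show the join $J$ of $E$ and $Q$ is a convex subset of $\Omega$, and an argument as in Lemma 12.1 shows $J$ is a cross-section; since $\dim J = 3 = \dim\Omega$ we get $J=\Omega$, so $\Omega$ is the join of $E$ and $Q$.

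It then remains to identify $Q$ as a quadrilateral. Let $u,v$ be the endpoints of $E$. For every extreme point $q$ of $Q$ the no-extreme-line hypothesis forces $[u,q]$ and $[v,q]$ into $\partial\Omega$. The governing constraint is coplanarity: a $2$-face $S$ of $\Omega$ containing all of $E$ can contain at most two extreme points of $Q$, since three non-collinear points of $Q$ span its plane, which cannot also contain the opposite edge $E$. If $Q$ had an extreme point $q$ lying in no $2$-face through $E$ — in particular if $Q$ had infinitely many extreme points, as a disk would — then the cross segment from $u$ or $v$ to $q$ would lie in no face and would cross the interior of $\Omega$, producing an extreme line. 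Thus $Q$ is polygonal and its vertices are covered by the finitely many $2$-faces through $E$, each carrying at most two of them; the remaining bookkeeping, together with the fact that over a triangular base these slant $2$-faces would be triangles with no diagonal to hold the cross segments, forces $Q$ to have exactly four vertices. Hence $\Omega$ is the join of the line $E$ with a quadrilateral, that is, up to projective equivalence, the join of a line with a rectangle.

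The hardest parts are the middle step — rigorously producing the opposite edge--facet pair $(E,Q)$ and proving $\Omega=J$ rather than merely that $e$ is opposite to some $2$-face — and the final bookkeeping that the $2$-faces through $E$ number exactly two and meet $Q$ in exactly four vertices. I expect to control both by rotating a $2$-plane about the line carrying $E$ (or the line through $[a,b]$) and analyzing the resulting cross-sections, whose Hilbert metrics agree with $d_\Omega$ by Lemma 7.2 and Lemma 7.3, with the coplanarity bound and the cross-segment principle of the previous paragraph supplying the mechanism that pins the base down to a quadrilateral.
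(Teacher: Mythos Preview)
Your route is genuinely different from the paper's. The paper never invokes the minimal-cone machinery of Section~12 at all; instead it fixes an extreme $3$-simplex $\Delta=[e_1,e_2,e_3,e_4]$ in $\Omega$ and proves the elementary fact that \emph{every} further extreme point of $\Omega$ must lie in one of the four facial planes $P_i$ of $\Delta$. The argument is short: if an extra extreme point $e$ avoided all $P_i$, then (after relabeling) the open triangle $(e_2,e_3,e_4)$ would sit inside $\Omega$, and the segment $[e_1,e]\subset\partial\Omega$ would have to meet the boundary triangle $\partial(e_2,e_3,e_4)$ in $P_1$, contradicting the choice of $e$. Once every extreme point is trapped in one of four planes, the dichotomy falls out: all extras in a single $P_i$ gives a cone, otherwise one facet $Q$ picks up $\ge 4$ extreme points and two more extreme points $f_1,f_2$ sit off its plane; a pentagon in $Q$ or a third point off $Q$ would force an extreme line, so $Q$ is a quadrilateral and $\Omega$ is its join with $[f_1,f_2]$.

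By comparison, your plan front-loads the hardest geometry. You correctly extract a minimal $2$-cone $(e,a,b)$, but the step ``produce a $1$-face $E$ through $e$ opposite to a $2$-face $Q$ and show $\Omega$ equals their join'' is doing almost all the work of the lemma, and the mechanism you suggest (rotate a plane about the line through $E$ and analyze cross-sections) is not yet an argument. In particular you have not shown that $e$ lies on any $1$-face at all, nor that a $2$-face opposite to $e$ exists---your measure-theoretic sentence about lines from $e$ sweeping a $2$-dimensional piece of $\partial\Omega$ is suggestive but not a proof, and the passage from ``$e$ opposite to some $2$-face $Q$'' to ``a $1$-face through $e$ opposite to $Q$'' is a genuine gap. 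The paper sidesteps all of this: it never needs to know in advance that $\Omega$ has a $2$-face or a $1$-face, because the coplanarity constraint on extreme points produces the facet $Q$ and the edge $[f_1,f_2]$ simultaneously as an output. If you want to salvage your approach, the cleanest fix is probably to abandon the minimal-cone opening and instead argue, as the paper does, from an extreme $3$-simplex; your final-paragraph bookkeeping on why $Q$ must be a quadrilateral is then essentially the same as the paper's.
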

\begin{proof}
Let $\Delta$ be an extreme 3-simplex in $\Omega$ with vertices $e_1,...,e_4$. Define $P_i$ to be the plane which contains all of the vertices of $\Delta$ except the $i$th vertex. We can assume that $\Omega$ is not a 3-simplex thus we can find an extreme point $e$ of $\overline{\Omega}$ which is not a vertex of $\Delta$.

Suppose $e$ does not lie in any of the planes $P_i$. Without loss of generality we can assume that $e$ lies in the open halfspace of $\mathbb{R}^3$ determined by $P_1$ which does not contain $\Delta$. If the triangle $(e_2,e_3,e_4)$ is contained in $\partial\Omega$ then there exists a facet $F$ of $\Omega$ containing it. By theorem 5.8 in ~\cite{Convex} $F$ is an exposed face of $\Omega$ and so there exists a proper supporting hyperplane $H$ such that $F=H\cap\overline{\Omega}$. From theorem 4.5 in ~\cite{Convex} we have a contradiction because $\overline{\Omega}$ is the intersection of its supporting halfspaces.

Therefore $(e_2,e_3,e_4)$ is contained in $\Omega$. The line $L\subset\partial\Omega$ through $e_1$ and $e$ intersects $P_1$ in a single point $p\in\partial\Omega$. Because $[e_2,e_3]$, $[e_2,e_4]$ and $[e_3,e_4]$ are in $\partial\Omega$ it follows that $P_1\cap\partial\Omega$ is the boundary of $(e_2,e_3,e_4)$. Without loss of generality we can assume that $p\in(e_2,e_3)$. This is a contradiction because this means that $e$ lies in $P_4$.

So far we have shown that any extreme point of $\Omega$ must lie in one of the planes determined by the faces of $\Delta$. If every extreme point besides $e_1,e_2,e_3,e_4$ of $\Omega$ lies in the same plane determined by the same face of $\Delta$ then $\Omega$ is a cone.

Otherwise $\Omega$ has a facet $Q$ with at least four extreme points $q_1,q_2,q_3,q_4$ which form a quadrilateral. Furthermore we can find two extreme points $f_1$ and $f_2$ with $[f_1,f_2]\subset\partial\Omega$ and $f_1$ and $f_2$ do not lie in the plane containing $Q$. If $\Omega$ has a fifth extreme point $q$ contained in $Q$ then $\Omega$ contains the interior of the convex hull of a pentagon with a line. This convex hull contains an extreme line. Thus $Q$ is the convex hull of $q_1,q_2,q_3,q_4$.

If there is a third extreme point $f_3$ of $\Omega$ which does not lie in the plane containing $Q$ then $\Omega$ contains the interior of the convex hull of the triangle $(f_1,f_2,f_3)$ and $Q$ and therefore contains an extreme line. Thus there are at most two extreme points of $\Omega$ which are not contained in $Q$ and the result follows.
\end{proof}

\begin{thm}
If $\tau:\Omega\rightarrow\Omega'$ is an isometry that has a focusing point, then $\tau$ is a projective transformation.
\end{thm}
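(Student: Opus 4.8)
The plan is to reduce the statement to Lemma 10.7 by proving that \emph{every} extreme point of $\Omega$ is a focusing point of $\tau$; once this is shown, $\tau$ has a focusing $n$-simplex and Lemma 10.7 immediately gives that $\tau$ is projective. Two cases are disposed of at the outset. If $\Omega$ contains an extreme line then Theorem 10.9 already shows $\tau$ is projective, so I would assume from now on that $\Omega$ has no extreme line. If $\Omega$ is an $n$-simplex then $\Omega'$ is an $n$-simplex and the structure of $Isom(\Delta_n)$ from \cite{Polyhedral} (the only non-projective isometries are built from the map that blows every vertex up to the opposite facet) shows that a single focusing point forces $\tau$ to be projective. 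The remaining work is an induction on $n=\dim\Omega$, with the two-dimensional case (a triangle) as the base, handled exactly as for the simplex.

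The engine of the induction is the minimal cone. Let $e$ be the given focusing point and, using Lemma 12.2, choose a minimal cone $C_e(I)$ with vertex $e$. By Lemma 12.1 this cone is a cross section of $\Omega$ that is rigid under $\tau$, so by Lemma 9.3 the restriction $\tau|_{C_e(I)}$ is itself a Hilbert isometry onto $\tau(C_e(I))$. The crucial point is that, because $e$ focuses, the face of $\Omega'$ attached to the vertex face $\{e\}$ by Theorem 8.11 collapses to the single extreme point $f$ to which $e$ focuses; hence $\tau(C_e(I))$ is again a \emph{cone} $C_f(I')$, minimal in $\Omega'$ by Lemma 12.1. Thus $\tau$ restricts to an isometry $C_e(I)\to C_f(I')$ of cones of dimension $\dim(I)+1$, and $e$ is plainly a focusing point of this restriction. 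When $\Omega$ is not a cone, every minimal cone has dimension strictly less than $n$ (a minimal cone of dimension $n$ would be an $n$-dimensional cross section, forcing $\Omega$ itself to be a cone), so the inductive hypothesis applies and $\tau|_{C_e(I)}$ is projective.

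From projectivity of $\tau|_{C_e(I)}$ I would extract new focusing points of $\tau$. Every extreme point $e'$ of the base $\overline{I}$ is an extreme point of $\Omega$, and under the projective map $\tau|_{C_e(I)}$ it is carried to an extreme point of the minimal cone $C_f(I')$, hence to an extreme point of $\Omega'$. Taking a ray inside $C_e(I)$ from $e'$ toward the opposite face, its image converges to this extreme point, so Lemma 10.2 shows that $e'$ is a focusing point of $\tau$ itself, not merely of the restriction. In this way one focusing point produces a whole base face of focusing points. The case in which $\Omega$ is itself a cone is treated separately: using the separately established fact that an isometry carries a cone only to a cone (so $\Omega'$ is a cone) and carries the cone over a base to the cone over its image, the problem descends to the $(n-1)$-dimensional base, where the inductive hypothesis again applies; Lemma 14.1 is what lets the bookkeeping for these exceptional shapes bottom out in dimension three.

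The main obstacle will be assembling a full focusing $n$-simplex from these local conclusions, together with the cone reduction. The minimal-cone step only delivers the extreme points lying on one base opposite $e$; to reach all $n+1$ vertices of an extreme $n$-simplex I would iterate, using each newly found focusing point as the vertex of a fresh minimal cone and varying the opposite face, and argue—much as in the propagation step of Theorem 10.9—that the resulting set of focusing points meets enough extreme points to be in general position. Verifying that this propagation terminates with an extreme $n$-simplex of focusing points, and that the cone case descends cleanly (which leans on the separately hard fact that cones are isometric only to cones, flagged as step 4 of the proof sketch for Theorem 1.1), is the delicate part; once a focusing $n$-simplex is in hand, Lemma 10.7 closes the argument.
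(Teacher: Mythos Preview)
Your overall strategy---reduce to Lemma 10.7 by showing every extreme point focuses, using minimal cones plus induction on dimension---is close in spirit to the paper's, but there is one concrete error and one structural divergence.

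The error is circularity. You invoke Lemma 14.1 (cones are isometric only to cones) to handle the case where $\Omega$ is itself a cone, but the proof of Lemma 14.1 opens with ``If $v$ is a focusing point of $\tau$ then $\tau$ is a projective transformation from theorem 13.2.'' Lemma 14.1 thus depends on the very theorem you are proving and cannot be cited here. This matters: when $\Omega$ is a cone and the given focusing point is the cone vertex $v$, the only face opposite $v$ is the base (lines from $v$ to any proper face of the base lie in $\partial\Omega$), so the only minimal cone with vertex $v$ is $\Omega$ itself, and your inductive step does not reduce the dimension. Your fallback for exactly this situation is the circular appeal to Lemma 14.1.

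The structural divergence is in where the minimal cone is anchored. You put $C_e(I)$ at the \emph{known} focusing point and try to propagate outward, leaving as a sketch the question of whether iterated minimal cones reach an extreme $n$-simplex. The paper does the opposite: it fixes the focusing point $v$ and, for an \emph{arbitrary} other extreme point $e$, takes a minimal cone $C_e(I)$ at $e$. Assuming for contradiction that $e$ is not focusing, it then constructs by hand a $3$-dimensional rigid cross section of $\Omega'$ containing both the image data of $C_e(I)$ and the point $f$ to which $v$ focuses; the restriction of $\tau$ to the preimage is a Hilbert isometry of $3$-dimensional domains with a focusing point, and the $3$-dimensional case has already been established directly (via Lemma 13.1, which classifies $3$-dimensional domains with no extreme line, followed by a short explicit argument for cones). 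This gives a contradiction, so $e$ was focusing. By reducing every dimension to a carefully built $3$-dimensional slice, the paper avoids both your propagation problem and your circular use of Lemma 14.1.
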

\begin{proof}
If $\Omega$ and $\Omega'$ are 2-dimensional convex sets then we have already seen that this is true in section 11.

\textit{Case 1:} $dim(\Omega)=dim(\Omega')=3.$ We will first consider the case when $\Omega$ and $\Omega'$ are 3-dimensional convex domains. In this case we can assume that $\Omega$ and $\Omega'$ are cones or the join of a line and a quadrilateral by lemmas 10.8, 13.1 and theorem 10.9. But the join of a quadrilateral and a line is not isometric to a cone by applying theorem 8.11. Thus from theorem 5.1 we can assume that $\Omega$ and $\Omega'$ are cones. Since we can assume that $\Omega$ and $\Omega'$ are not 3-simplices they must have a unique vertex and base. Furthermore $\Omega$ and $\Omega'$ are not the join of two skew line segments. It follows from theorem 8.11 that $\tau$ sends a line from the vertex of $\Omega$ to the base of $\Omega$ to a line from the vertex of $\Omega'$ to the base of $\Omega'$.

\textit{Claim: The vertex of} $\Omega$ \textit{is a focusing point of} $\tau.$ Let $e$ be a focusing point of $\tau$ and let $f$ be the extreme point of $\Omega'$ which $e$ focuses to under $\tau$. If the vertex of $\Omega$ is not a focusing point of $\tau$ then $e$ must be contained in the relative boundary of the base of $\Omega$.  Let $\alpha$ be a point different from $e$ in the relative boundary of the base of $\Omega$ such that $(e,\alpha)$ is contained in the interior of the base of $\Omega$. If $v$ is the vertex of $\Omega$ then the triangle with vertices $e,v$ and $\alpha$ is a 2-dimensional cross section of $\Omega$ rigid under $\tau$ which contains a focusing point. It follows that $\tau$ restricted to this triangle is a projective transformation. Thus two lines in this triangle from $v$ to the base of $\Omega$ are mapped to two lines from the vertex of $\Omega'$ to the base of $\Omega'$ with the same orientation. This contradicts the assumption that $v$ is not a focusing point of $\tau$, see lemma 10.2.

\textit{Claim: Any extreme point of} $\Omega$ \textit{is a focusing point of} $\tau.$ So far we have shown that we can assume that $v$ is a focusing point of $\tau$ that focuses to the vertex $v'$ of $\Omega'$. Now we will use $e$ to denote any extreme point of $\Omega$ different from $v$ and will show that $e$ is a focusing point of $\tau$. We can assume that $\Omega$ and $\Omega'$ are not 3-simplices because we know this result is already true in that case. Because $\Omega$ is a 3-dimensional cone which is not a 3-simplex there exists a minimal cone $C_e(I)$ in $\Omega$ with dimension $1$. This cone is just a triangle and two of its vertices are $e$ and $v$. Because $v$ is a focusing point of $\tau$ the restriction of $\tau$ to $C_e(I)$ is a projective transformation and $\tau(C_e(I))$ is a triangle in $\Omega'$. Furthermore if $r$ is a rigid ray in $\Omega$ with $a(r)=e$ then $a(\tau(r))$ is an extreme point of $\tau(C_e(I))$. Since $C_e(I)$ is a minimal cone it follows from corollary 8.10 and lemma 8.3 that $a(\tau(r))$ is an extreme point of $\Omega'$ as well and thus $e$ is a focusing point of $\tau$ from lemma 10.2.

We have show that every extreme point of $\Omega$ is a focusing point of $\tau$ and therefore $\tau$ contains a focusing 3-simplex and is a projective transformation by lemma 10.7.

\textit{Case 2:} $dim(\Omega)=dim(\Omega')>3.$ We will use the 2 and 3 dimensional cases to finish the proof in higher dimensions. Let $\Omega$ and $\Omega'$ be general convex domains with dimension $n>3$.

\textit{Claim: Any extreme point of} $\Omega$ \textit{is a focusing point of} $\tau.$ Suppose $v$ is a focusing point of $\tau$ and $f$ is the point to which $v$ focuses to under $\tau$ and let $e$ be any other extreme point of $\Omega$. Choose some minimal cone $C_e(I)$ in $\Omega$. If $L$ is a line in $C_e(I)$ from $e$ to $I$ then the accumulation points of $\tau(L)$ in $\partial\Omega'$ lie in the relative interiors $I_1$ and $I_2$ of two faces of $\Omega'$ such that if $r$ is a ray in $\Omega$ with $a(r)=e$ then $a(\tau(r))$ is in $I_1$. Also $\tau(C_e(I))=J$ where $J$ is the join of $I_1$ and $I_2$, see theorem 8.11, and the relative boundary of $J$ is contained in $\partial\Omega'$ because $C_e(I)$ is minimal, see lemma 12.1.

If $I_1$ is an extreme point of $\Omega'$ then we are done by lemma 10.2 so suppose otherwise. Then there exists an extreme point $e'$ of $\Omega'$ in the relative boundary of $I_1$. Let $p$ be any point in $I_2$ and $\alpha$ be a point in the relative boundary of $I_1$ which lies in the relative interior of an opposite face in $\overline{I_1}$ of $e'$. By construction the triangle $T$ with vertices $\alpha,e'$ and $p$ is contained in $\Omega'$, see figure 12 below. Furthermore its relative boundary is contained in the boundary of $\Omega'$ because the relative boundary of $T$ is contained in the relative boundary of $J$.

The isometry $\tau^{-1}$ maps any two lines from $p$ to $(\alpha,e')$ to lines from $I$ to $e$ with the same orientation by lemma 8.3 and corollary 8.10. It follows that $\tau^{-1}_T$ is a non projective isometry of two dimensional triangles with their Hilbert metrics. Therefore $f$ does not lie in the plane containing $T$, because $f$ is focusing point of $\tau^{-1}$. Thus $(\alpha,e',p,f)$ is a 3-simplex in $\Omega'$. If $D'=P'\cap\Omega'$ is the 3-dimensional cross section of $\Omega'$ containing this 3-simplex then we will show that $\tau^{-1}(D')=P\cap\Omega$ for some 3-dimensional affine subspace $P$.

\textit{Main Claim:} $\tau^{-1}(D')=P\cap\Omega$ \textit{for some 3-dimensional affine subspace} $P$. In order to prove the main claim we need to understand which part of the boundary of the 3-simplex $[\alpha,e',p,f]$ lies in $\partial\Omega'$.

\textit{Claim:} $[\alpha,e',f]\subset\partial\Omega'$. If the triangle $(\alpha,e',f)$ is contained in $\Omega'$ then there is a straight line in $\Omega'$ from $f$ to $(\alpha,e')$. From corollary 8.10 the isometry $\tau^{-1}$ maps this line to an extreme line from $e$ to $v$. This contradicts the assumption that $e$ is not a focusing point of $\tau$ so the closed triangle $[\alpha,e',f]$ is contained in the boundary of $\Omega'$.

\textit{Claim:} $[e',p,f]\subset\partial\Omega'$. If the triangle $(e',p,f)$ is contained in $\Omega'$ then let $L_1$ and $L_2$ be two lines in $\Omega'$ from $f$ to $(e',p)$ that converge to two different points $\beta_1$ and $\beta_2$ in $(e',p)$, see figure 12. Furthermore let $M_1$ and $M_2$ be the two lines in $T$ from $\alpha$ to $\beta_1$ and $\beta_2$ respectively. Because $\tau^{-1}$ restricted to $T$ is a non projective isometry of triangles, if $r_1$ and $r_2$ are rays on $M_1$ and $M_2$ converging to $\beta_1$ and $\beta_2$ respectively then $a(\tau^{-1}(r_1))=a(\tau^{-1}(r_2))$.

Because $f$ is a focusing point of $\tau^{-1}$ the triangle $(\alpha,f,\beta_1)$ is mapped by $\tau^{-1}$ into a 2-dimensional plane $P_1$. Let $r_1'$ and $r_2'$ be rays on $L_1$ and $L_2$ converging to $\beta_1$ and $\beta_2$. By continuity $\tau^{-1}(r_1)$ and $\tau^{-1}(r_1')$ are contained in $P_1$ as well. It follows from corollary 8.10 that $a(\tau^{-1}(r_1))$ and $a(\tau^{-1}(r_1'))$ are contained in the relative interior of the same face of $P_1\cap\Omega$. If $\gamma$ is a path in $(\alpha,f,\beta_1)$ from $r_1$ to $r_1'$ then $\tau^{-1}(\gamma)$ is a path from $\tau^{-1}(r_1)$ to $\tau^{-1}(r_1')$ and every point on $\tau^{-1}(r_1)$ except the endpoints if contained in $\tau^{-1}(\alpha,f,\beta_1)$.

From continuity it follows that $a(\tau^{-1}(r_1'))=a(\tau^{-1}(r_1))$ and by similar reasoning $a(\tau^{-1}(r_2'))=a(\tau^{-1}(r_2))$. Thus $\tau^{-1}(L_1)=\tau^{-1}(L_2)$ because $f$ is a focusing point of $\tau^{-1}$ and we have already shown $a(\tau^{-1}(r_1))=a(\tau^{-1}(r_2))$. This is a contradiction because $L_1$ and $L_2$ were chosen to be distinct lines in $\Omega'$ so the triangle $[e',p,f]$ is contained in $\partial\Omega'$.

\textit{Claim:} $\tau^{-1}(\alpha,e',p,f)$ \textit{is an open 3-simplex}. Recall that $P'$ is the 3-dimensional affine subspace containing the interior of the 3-simplex $[\alpha,e',p,f]$. Because $\tau^{-1}$ of $(\alpha,e',p)$ is a triangle and $f$ is a focusing point of $\tau^{-1}$ we have that $\tau^{-1}(\alpha,e',p,f)$ is an open 3-simplex $\Delta$ that is contained in a 3-dimensional affine subspace $P$.

We will now complete the proof of the main claim. Let $H_f$ be the closed halfspace in $P'\cap\Omega'$ bounded by $[\alpha,e',p]$ which does not contain $[\alpha,e',p,f]$ and $H_{e'}$ be the closed halfspace in $P'\cap\Omega'$ bounded by $[\alpha,p,f]$ which does not contain $[\alpha,e',p,f]$. There are three possibilities for points $x$ in $D'=P'\cap\Omega'$ which are not in $(\alpha,e',p,f)$. We want to show that in all three cases $\tau^{-1}(x)\in P\cap\Omega$ to conclude that $\tau^{-1}(D')\subset P\cap\Omega$.

\textit{Case 1:} $x\in H_f\setminus H_{e'}$. If $x$ lies in $H_f$ but not $H_{e'}$ the $x$ lies on a rigid line in $\Omega'$ with $f$ as an accumulation point and this line passes through $(\alpha,e',p,f)$ and thus $\tau^{-1}(x)$ is contained in $P\cap\Omega$.

\textit{Case 2:} $x\in H_{e'}\setminus H_f$.
Similarly to case 1, the point $\tau^{-1}(x)$ is contained in $P\cap\Omega$ if $x$ lies in $H_{e'}$ but not $H_f$ because $x$ lies on a rigid line in $\Omega'$ that passes through $(\alpha,e',p,f)$  and has $e'$ as one of its accumulation points.

\textit{Case 3:} $x\in H_f\cap H_{e'}$.
The final case to consider if when $x$ lies in both $H_f$ and $H_{e'}$. There exists a rigid line in $\Omega'$ through $x$ with accumulation point $f$ that contains a point in $H_{e'}\cap H_f^c$ and therefore $\tau^{-1}(x)$ is contained in $P\cap\Omega$ because we have $\tau^{-1}(H_{e'}\cap H_f^c)\subset P\cap\Omega$ from the previous case.

We have shown that $\tau^{-1}(D')\subset P\cap\Omega$ and in a similar manner as in the proof of lemma 9.4 we get $\tau^{-1}(D')= P\cap\Omega$ which completes the proof of the main claim. Therefore $D'$ is a cross section of $\Omega'$ which is rigid under $\tau$ and so $\tau:P\cap\Omega\rightarrow D'$ is an isometry of 3-dimensional convex domains which has a focusing point $v$. It follows from the 3-dimensional case that $\tau|_{P\cap\Omega}$ is projective. This is a contradiction because $e$ is in the relative boundary of $P\cap\Omega$ and we assumed $e$ is not a focusing point of $\tau$ at the beginning of case 2 of the proof.

So we have shown that because $\tau$ has a focusing point every extreme point of $\Omega$ is a focusing point of $\tau$ and therefore $\tau$ is a projective transformation.

\end{proof}


\begin{figure}[h]
    \centering
    \def\svgwidth{\columnwidth}
    \begin{overpic}[]{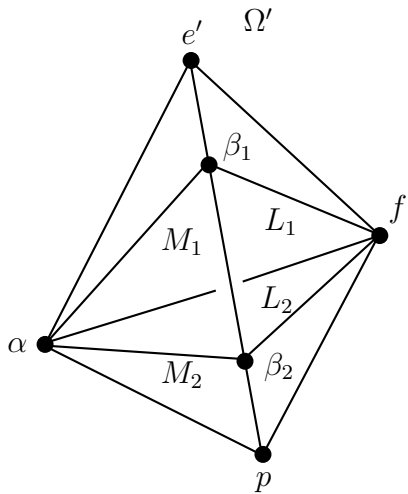}
        \put (50,105) {$\Omega'$}
        \put (-7,27.5) {$\alpha$}
        \put (35,102) {$e'$}
        \put (85,60) {$f$}
        \put (53,-5) {$p$}
        \put (45,75) {$\beta_1$}
        \put (55,22) {$\beta_2$}
        \put (55,57) {$L_1$}
        \put (54,38) {$L_2$}
        \put (30,52) {$M_1$}
        \put (30,20) {$M_2$}
    \end{overpic}
    \vspace{3mm}
    \caption{Theorem 13.2}
\end{figure}

\section{Isometries of Convex Cones}

 In the proof of theorem 13.2 we saw that cones play an important role in understanding isometries of the Hilbert metric. In this section we will study the Hilbert isometries of general convex cones and develop some tools that will allow us to understand the Hilbert isometries of any convex domain.

In this section the term cone will mean a convex domain $\Omega$ in $\mathbb{R}^n$ which consists of all straight lines from a $n-1$ dimensional convex domain $B$ in $\mathbb{R}^n$ to a point $v$ which does not lie in the hyperplane containing $B$. We will call $v$ a vertex of $\Omega$ and $B$ will be called a base of $\Omega$ corresponding to $v$. The main lemma shows that a cone with the Hilbert metric is not isometric to a convex domain with the Hilbert metric which is not a cone.

\begin{lem}
Suppose $\tau:\Omega\rightarrow\Omega'$ is an isometry between Hilbert domains and $\Omega$ is a cone, then $\Omega'$ is a cone. Furthermore if $L$ is a line in $\Omega$ from a vertex $v$ of $\Omega$ to a corresponding base $B$ of $\Omega$ then $\tau(L)$ is a line in $\Omega'$ from a vertex $v'$ of $\Omega'$ to a corresponding base $B'$ of $\Omega'$.
\end{lem}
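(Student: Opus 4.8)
The plan is to exhibit $\Omega$ as the join of two opposite rigid faces, push this description through $\tau$ with theorem 8.11, and then prove that one of the two resulting faces of $\Omega'$ is a single point; since a cone is precisely a join of an extreme point and the opposite facet, this finishes both assertions at once.

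First I would check that $\{v\}$ and the base $B$ are opposite rigid faces of $\Omega$. The vertex $v$ is an extreme point, so any line from $v$ to a point of $B$ is rigid by the criterion following lemma 3.6, and since one such rigid line exists, lemma 8.3 makes \emph{every} line from $\{v\}$ to $B$ rigid. Because $\Omega = C_v(B)$ is exactly the join $J$ of $\{v\}$ and $B$, theorem 8.11 gives $\Omega' = \tau(J) = J'$, the join of two opposite rigid faces $I_1'$ and $I_2'$, where $I_1'$ contains the $v$-end of the accumulation of $\tau(L)$ and $I_2'$ the $B$-end. If I can show that $I_1'$ (or $I_2'$) is a single extreme point $v'$, then $\Omega'$ is the join of $v'$ and the opposite facet, hence a cone with vertex $v'$, and $\tau(L)$, running from $I_1'$ to $I_2'$, is a line from the vertex to the base (with orientation possibly reversed) --- exactly the ``furthermore''.

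To produce that point I would cut $\Omega$ by extreme triangles through $v$. For extreme points $e_1,e_2$ of $B$ whose chord $[e_1,e_2]$ meets the relative interior of $B$, the open triangle $\Delta = (v,e_1,e_2)$ is an extreme triangle, hence a $2$-dimensional cross section that is rigid under $\tau$ (corollary 9.5); by lemma 9.3 the restriction $\tau|_\Delta$ is an isometry of the triangle $\Delta$ onto $\tau(\Delta)$, so by theorem 1.4 $\tau(\Delta)$ is again a triangle and $\tau|_\Delta$ is either projective or, up to a projective map, the quadratic transformation $\gamma$ of section 4. In the projective case all three vertices go to vertices, so every ray to $v$ inside $\Delta$ converges under $\tau$ to one vertex $w_\Delta$ of $\tau(\Delta)$; in the $\gamma$ case vertices and opposite edges are interchanged, so the rays to the base edge $(e_1,e_2)$ all converge to a single vertex $u_\Delta$ while the rays to $v$ instead spread along an edge. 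I would then glue: two such triangles that share a ray (their planes meet $\Omega$ in a common line through $v$, or their base chords cross inside $B$) must assign the same accumulation point to the shared ray, forcing $w_\Delta$, respectively $u_\Delta$, to be independent of the triangle. In the projective case this shows $v$ focuses to a single extreme point $v'$, so $I_1' = \{v'\}$ (indeed $\tau$ is then projective by theorem 13.2); in the $\gamma$ case it shows the whole base $B$ focuses to a single point $v'$, which one checks is extreme by the argument of lemma 10.2, so $I_2' = \{v'\}$. Either way $\Omega'$ is a cone.

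The hard part will be making the gluing coherent. One must know that the family of extreme triangles through $v$ is connected through shared rays and that the projective-versus-$\gamma$ alternative is consistent across the family, so that the distinguished points $w_\Delta$ or $u_\Delta$ genuinely glue to one point and one verifies its extremality in $\Omega'$. This is clean when $\mathrm{ext}(B)$ is connected (in particular when $B$ is strictly convex, where $\partial B = \mathrm{ext}(B)$), but it is delicate when $\mathrm{ext}(B)$ is finite: then $\Omega = C_v(B)$ is a polyhedral domain and the extreme triangles do not sweep out all of $\Omega$, so I would route this case through the polyhedral classification of theorem 5.1, the $n$-simplex being the only exception and itself a cone. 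Establishing this coherence --- together with the attendant continuity of $\Delta \mapsto \tau|_\Delta$ and the fact that the projective isometries form a clopen index-$2$ subgroup of the isometry group of the triangle --- is what makes this step (step 4 of the sketch of theorem 1.1) the main obstacle.
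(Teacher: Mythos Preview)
Your approach via $2$-dimensional extreme-triangle slices is genuinely different from the paper's. The paper argues by strong induction on $n=\dim\Omega$: assuming $v$ is not a focusing point (else theorem~13.2 finishes), it writes $\Omega'$ as the join of faces $v'$ and $B'$ via theorem~8.11, and if $\dim v'\geq1$ it locates (lemma~12.3) a minimal cone $C_{e'}(I')\subset\Omega'$ of dimension $<n$, pulls it back to a minimal cone in $\Omega$ (lemma~12.1), and derives a contradiction by exhibiting a sub-triangle or sub-$3$-simplex on which $\tau^{-1}$ is a Hilbert isometry of simplices that simultaneously preserves and reverses the orientation of vertex-line families. The key move is to work with minimal cones in the \emph{target}, where extremality of the cone vertex $e'$ comes for free.

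Your route has a real gap beyond the difficulties you flag. First, the polyhedral fallback through theorem~5.1 does not apply: that result concerns $Isom(\Omega)$, i.e.\ self-isometries, and says nothing about an isometry $\tau:\Omega\to\Omega'$ with $\Omega'$ a priori arbitrary; so when $B$ is, say, a quadrilateral --- where the only extreme chords are the two diagonals and miss most of $B$ --- you have no argument that $\Omega'$ is a cone. Second, and more structurally, even when the extreme triangles do cover $\Omega$ and glue coherently, you never show that the common point $w_\Delta$ (or $u_\Delta$) is \emph{extreme in $\Omega'$}: it is only a vertex of the $2$-dimensional cross section $\tau(\Delta)$, which does not force extremality in the ambient domain. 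Without that you cannot invoke lemma~10.2 (whose hypothesis is one ray with \emph{extreme} image-endpoint), nor can you run the converse argument of that lemma, since it already presupposes that every ray to $v$ lands at the same point. This is a genuine chicken-and-egg, and the paper's choice to work from minimal cones on the $\Omega'$ side is precisely what breaks it.
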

\begin{proof}
Let $v$ and $B$ be a vertex and corresponding base of $\Omega$. If $v$ is a focusing point of $\tau$ then $\tau$ is a projective transformation from theorem 13.2 so the conclusion of the lemma follows immediately from lemma 10.2 and corollary 8.10.

Otherwise let $L$ be a rigid line in $\Omega$ from $v$ to $B$. If $r$ is a rigid ray on $L$ with $a(r)=v$ then $a(\tau(r))$ is contained in the relative interior $B'$ of a proper face of $\Omega'$ with $dim(B')\geq1$. Let $r_B$ be a ray on $L$ with $a(r_B)$ in $B$. The accumulation point $a(\tau(r_B))$ lies in the relative interior $v'$ of a proper face of $\Omega'$. It follows from corollary 8.10 that the image of any line from $v$ to $B$ in $\Omega$ under $\tau$ is a line from $B'$ to $v'$ in $\Omega'$ and $\Omega'$ is the join of $B'$ and $v'$ by theorem 8.11.

We will use strong induction to finish the proof of this lemma. The base case is when $dim(\Omega)=dim(\Omega')=2$. The only convex cone in dimension 2 is a triangle and we have already seen that a triangle is only isometric to another triangle so the lemma follows in this case from our previous results. Thus we can assume that $dim(\Omega)=dim(\Omega')=n>2$ and the statement of the lemma holds for any isometry of a convex cone of dimension less than $n$.\

If $v'$ is an extreme point of $\Omega'$ then $\Omega'$ is a cone with vertex $v'$ and corresponding base $B'$ so we are done. If not then $dim(v')\geq1$ and since we can assume $\Omega'$ is not an $n$-simplex it follows from lemma 12.3 that there exists a minimal cone $C_{e'}(I')$ in $\Omega'$ with $dim(C_{e'}(I'))=m<n$. Because $\Omega'$ is the join of $B'$ and $v'$ there are two cases to consider.

For the first case suppose $e'$ is contained in the relative boundary of $v'$. It follows from lemma 12.1 that $C_{e'}(I')$ is a rigid cross section of $\tau^{-1}$ so $\tau^{-1}:C_{e'}(I')\rightarrow\tau^{-1}(C_{e'}(I'))$ is a Hilbert isometry. It follows from the induction hypothesis that $\tau^{-1}(C_{e'}(I'))$ is an $m$-dimensional cone contained in $\Omega$ with vertex $e$ and corresponding base $I$ with the property that a line from $e'$ to $I'$ is mapped to a line from $e$ to $I$ but possibly with reverse orientation.
Lemma 12.1 implies that the relative boundary of $\tau^{-1}(C_{e'}(I'))$ is contained in $\partial\Omega$.

We will show that $\tau^{-1}(C_{e'}(I'))=C_e(I)$ is a minimal cone in $\Omega$. To do this we need to show that $I$ is the relative interior of a face of $\Omega$ and $e$ is an extreme point of $\Omega$. $I$ is contained in the relative interior $U$ of some face of $\Omega$ by corollary 8.10. If there exists a point $\alpha\in U\setminus I$ the line from $e$ to $\alpha$ is mapped to a line from $e'$ to $I'$ (possibly with reverse orientation) by corollary 8.10. This is impossible because a line that is not in $\tau^{-1}(C_{e'}(I'))$ is mapped inside of $C_{e'}(I')$. Thus $U=I$ and $I$ is the relative interior of a face of $\Omega$.

If $e$ is not an extreme point of $\Omega$ then there exists an open segment $S\subset\partial\Omega$ containing $e$. Let $L_1$ be a rigid line in $\Omega$ from a point in $S$ different from $e$ to a point in $I$. Then $\tau(L_1)$ is a rigid line in $C_{e'}(I')$ from $e'$ to $I'$ by corollary 8.10. This is a contradiction because $L_1$ is not contained in $\tau^{-1}(C_{e'}(I'))$. Hence $e$ is an extreme point of $\Omega$ and $C_{e}(I)$ is a minimal cone in $\Omega$.

We have already seen that a rigid line in $\Omega'$ from $e'$ to $I'$ is mapped by $\tau^{-1}$ to a rigid line in $\Omega$ from $e$ to $I$. The orientation of these lines must be reversed because we assumed that $v$ is not a focusing point of $\tau$ and therefore $\tau$ has no focusing point, see theorem 13.2.

We claim that because $e'$ is in the relative boundary of $v'$ and $C_{e'}(I')$ is a minimal cone in $\Omega'$ then $I'$ is the join of $B'$ with the relative interior of a proper face of $v'$. Clearly $I'$ is not equal to $B'$ or $v'$ because any line from $e'$ to either of these sets is in $\partial\Omega'$ whereas lines from $e'$ to $I'$ are in $\Omega'$. Furthermore $I'$ is not the join of $v'$ with the relative interior of a proper face of $B'$ because this join is contained in $\partial\Omega'$ and $e'$ is contained in the relative boundary of this set. Thus $I'$ must be the join of the relative interior of a proper face $I'_{v'}$ of $v'$ with the relative interior $I'_{B'}$ of a nonempty face of $B'$. We need to show that $I'_{B'}=B'$. If $\beta'$ is a point in $I'_{B'}$ and $\alpha'$ is a point in $I'_{v'}$ then $(\alpha',\beta')\subset I'$ and so the triangle $(e',\alpha',\beta')$ is contained in $\Omega'$. Hence any line from $\beta'$ to a point on $(e',\alpha')$ is contained in $\Omega'$. Any line from a point in the relative boundary of $B'$ to $\overline{v'}$ is contained in $\partial\Omega'$ so $\beta'\in B'$ and the claim follows. Furthermore the line $(e',\alpha')$ must be in $v'$.

The relative boundary of $(e',\alpha',\beta')$ is contained in $\partial\Omega'$ and $\tau^{-1}(e',\alpha',\beta')$ is contained in a 2 dimensional plane thus $\tau^{-1}(e',\alpha',\beta)$ is a triangle and $\tau^{-1}:(e',\alpha',\beta')\rightarrow\tau^{-1}(e',\alpha',\beta')$ is a Hilbert isometry of triangles. This is a contradiction because the family of lines through $\beta'$ in $(e',\alpha',\beta')$ is mapped by $\tau^{-1}$ to the family of lines through $v$ in $\tau^{-1}(e',\alpha',\beta')$ with the same orientation by corollary 8.10. But we have already seen that the lines from $e'$ to $(\alpha',\beta')\subset I'$ in $(e',\alpha',\beta)$ are mapped by $\tau^{-1}$ to lines from $e$ to $I$ in $\Omega$ with the orientation reversed. This is impossible because an isometry of triangles either preserves all orientations of lines through vertices are reverses all orientations of such lines. This concludes the first case.

For the second case assume that $e'$ is in the relative boundary of $B'$. By a similar argument as before we can show that $I'$ is the join of $v'$ with the relative interior $I'_{B'}$ of a proper face of $B'$. Furthermore $e'$ and $I'_{B'}$ are opposite in $B'$ because lines from $e'$ to $I'$ are in $\Omega$. As before $\tau^{-1}(C_{e'}(I'))=C_{e}(I)$ is a minimal cone and $\tau^{-1}$ maps lines from $e'$ to $I'$ to lines from $e$ to $I$ with the reverse orientation. Let $\beta'$ be a point in $I'_{B'}$ so we have $(e',\beta')\subset B'$. Furthermore let $f$ be an extreme point in the relative boundary of $v'$ and $\alpha'$ a point in the relative boundary of $v'$ so that $(f,\alpha')\subset v'$. Because lines from $B'$ to $v'$ are rigid proposition 3.3 implies that $(e',\beta',f,\alpha')$ is a 3-simplex in $\Omega'$ and one can check that the relative boundary of this 3-simplex is contained in $\partial\Omega'$, see figure 13. If $p\in(e',\beta')$ then $(p,f,\alpha')$ is a cross section of $\Omega'$ which is rigid under $\tau^{-1}$ and thus $\tau^{-1}|_{(p,f,\alpha')}$ is a projective transformation of triangles.

The triangle $\tau^{-1}(p,f,\alpha')$ and $e$ determine a 3-dimensional plane $P$. By construction any line $L'$ from $e'$ to $(\beta',f,\alpha')$ is in $C_{e'}(I')$ and $L'$ intersects $(p,f,\alpha')$ so $\tau^{-1}(L')\subset P\cap\Omega$. So we have $\tau^{-1}(e',\beta',f,\alpha')\subset P\cap\Omega$. As shown previously this means $\tau^{-1}(e',\beta',f,\alpha')= P\cap\Omega$ and so $\tau^{-1}|_{(e',\beta',f,\alpha')}$ is a Hilbert isometry of 3-simplices. This is impossible because a line from $p$ to $(f,\alpha')$ is mapped by $\tau^{-1}$ to a line from $v$ to $B$ with the same orientation by corollary 8.10 and an isometry of 3-simplices always maps a line through a vertex to a line through a vertex.

\end{proof}

\begin{figure}[h]
    \centering
    \def\svgwidth{\columnwidth}
    \begin{overpic}[]{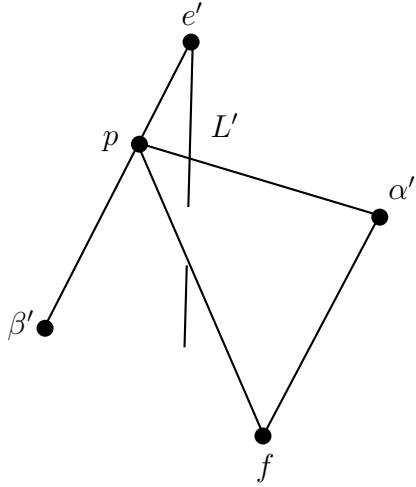}
        \put (-7,27.5) {$\beta'$}
        \put (35,102) {$e'$}
        \put (85,60) {$\alpha'$}
        \put (53,-7) {$f$}
        \put (42,75) {$L'$}
        \put (16,73) {$p$}
    \end{overpic}
    \vspace{3mm}
    \caption{Lemma 14.1}
\end{figure}

\begin{cor}
Suppose that $\tau:\Omega\rightarrow\Omega'$ is an isometry and $C_e(I)$ is a minimal cone in $\Omega$ then $\tau(C_e(I))$ is a minimal cone in $\Omega'$. Furthermore there exists an extreme point $e'$ of $\Omega'$ and the relative interior $I'$ of a proper face of $\Omega'$ such that $\tau(C_e(I))=C_{e'}(I')$ and $\tau$ maps a line from $e$ to $I$ to a line from $e'$ to $I'$.
\end{cor}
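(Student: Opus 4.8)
The plan is to obtain the corollary by restricting $\tau$ to the minimal cone $C_e(I)$ and applying Lemma 14.1, exploiting the fact that a minimal cone is simultaneously a rigid cross section and a cone in its own right.

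First I would invoke Lemma 12.1: since $C_e(I)$ is a minimal cone in $\Omega$, it is a cross section of $\Omega$ of dimension $\dim(I)+1$ and it is rigid under $\tau$. By Lemma 9.3 the restriction $\tau|_{C_e(I)}\colon (C_e(I),d_{C_e(I)})\to(\tau(C_e(I)),d_{\tau(C_e(I))})$ is therefore a Hilbert isometry. Now $C_e(I)$ is itself a cone, with vertex the extreme point $e$ and base $I$, so Lemma 14.1 applies to this restricted isometry and tells me that $\tau(C_e(I))$ is a cone; moreover a line in $C_e(I)$ from the vertex $e$ to the base $I$ is sent to a line in $\tau(C_e(I))$ from a vertex $e'$ of $\tau(C_e(I))$ to a corresponding base $I'$. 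This already gives the line-mapping statement of the corollary and the identification $\tau(C_e(I))=C_{e'}(I')$ as cones.

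It remains to verify that $C_{e'}(I')$ is a minimal cone in $\Omega'$, that is, that $e'$ is an extreme point of $\Omega'$, that $I'$ is the relative interior of a proper face of $\Omega'$, and that the relative boundary of $C_{e'}(I')$ lies in $\partial\Omega'$. The minimality is exactly the content of the second half of Lemma 12.1, whose proof shows that $\tau(C_e(I))$ is a cross section of $\Omega'$ whose relative boundary is contained in $\partial\Omega'$. To identify $e'$ and $I'$ with faces of $\Omega'$ I would combine this with Theorem 8.11: that theorem writes $\tau(C_e(I))$ as the join $J'$ of the relative interiors $I_1'$ and $I_2'$ of two proper faces of $\Omega'$, where every line of the join accumulates to a point of $I_1'$ on one end and a point of $I_2'$ on the other.

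The one point that needs care — the main obstacle — is matching the intrinsic cone vertex $e'$ with one of these faces. Since $\tau(C_e(I))$ is a cone with vertex $e'$, every line from the base to the vertex accumulates to the single point $e'$; comparing with the join description of Theorem 8.11 forces the face relative interior on that end to be the single point $e'$, so $e'$ is a $0$-dimensional face of $\Omega'$, that is, an extreme point of $\Omega'$. The base $I'$ is then the relative interior of the remaining proper face of $\Omega'$, and it is opposite to $e'$ because the images of the lines from $e$ to $I$ run from $e'$ to $I'$ inside $\Omega'$. Hence $\tau(C_e(I))=C_{e'}(I')$ is a minimal cone in $\Omega'$ of the required form, which completes the proof.
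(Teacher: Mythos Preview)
Your proof is correct and follows essentially the same route as the paper: restrict to the minimal cone via Lemma~12.1, apply Lemma~14.1 to the restricted isometry to obtain the cone structure with vertex $e'$ and base $I'$, and then verify that $e'$ and $I'$ are genuine face data of $\Omega'$. The paper in fact just says the proof is identical to the passage inside the proof of Lemma~14.1 where $\tau^{-1}(C_{e'}(I'))$ is shown to be a minimal cone.

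The only difference is in the final verification. The paper argues directly with Corollary~8.10: if $e'$ were not extreme in $\Omega'$, or if $I'$ were properly contained in a larger face interior, one could find a rigid line outside $\tau(C_e(I))$ whose image under $\tau^{-1}$ is forced by Corollary~8.10 to lie in $C_e(I)$, a contradiction. You instead invoke Theorem~8.11 to write $\tau(C_e(I))$ as the join of two face interiors $I_1',I_2'$ of $\Omega'$, and then observe that since $e'$ is the cone vertex it is an extreme point of $\overline{\tau(C_e(I))}$, which forces the face interior $I_1'$ containing it to be the singleton $\{e'\}$; the base $I'$ is then identified with $I_2'$. This is a legitimate shortcut, though you should make explicit the step ``$e'$ is an extreme point of the closed cone $\overline{\tau(C_e(I))}$, hence cannot lie in the relative interior of a positive-dimensional $I_1'\subset\overline{\tau(C_e(I))}$'', since the phrase ``comparing with the join description'' by itself does not quite carry the argument.
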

\begin{proof}
The proof is the same as that used in the proof of lemma 14.1 to show that $\tau^{-1}(C_{e'}(I'))=C_e(I)$ is a minimal cone in $\Omega$.
\end{proof}


\begin{lem}
Suppose $\tau:\Omega\rightarrow\Omega'$ is an isometry of convex domains and $\Omega$ is a homogeneous cone, then $\Omega'$ is a homogeneous cone.
\end{lem}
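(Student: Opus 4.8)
The plan is to deduce directly that $PGL(\Omega')$ acts transitively on $\Omega'$; combined with Lemma 14.1 (which already makes $\Omega'$ a cone) this shows $\Omega'$ is a homogeneous cone. First I would dispose of the degenerate case. If $\Omega$ is an $n$-simplex, then $\Omega$ is isometric to a normed vector space via the map $\Lambda_n$ of Section 5, hence so is $\Omega'$, and by the converse cited there ($\Omega'$ isometric to a normed space forces $\Omega'$ to be a simplex) the domain $\Omega'$ is itself an $n$-simplex. Since the diagonal group $T_n$ of Theorem 5.1 acts transitively on the open simplex, $\Omega'$ is a homogeneous cone and we are done. So from now on assume $\Omega$ is not a simplex.

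For the main case Lemma 14.1 gives that $\Omega'$ is a cone, and the same normed-space characterization shows $\Omega'$ is not a simplex either. Hence each of $\Omega$ and $\Omega'$ has a \emph{unique} vertex, say $v$ and $v'$, and this apex is a projective invariant of the domain, so every element of $PGL(\Omega)$ fixes $v$. Moreover Lemma 14.1 shows that $\tau$ carries vertex-to-base lines to vertex-to-base lines with matching orientation, so a ray converging to $v$ is sent by $\tau$ to a ray converging to $v'$. Together with Lemma 10.2 and Corollary 10.3 this says precisely that $v$ is a focusing point of $\tau$ focusing to $v'$, and symmetrically that $v'$ is a focusing point of $\tau^{-1}$ focusing to $v$.

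Now given any two points $y_1,y_2\in\Omega'$, I would set $x_i=\tau^{-1}(y_i)$ and use the homogeneity of $\Omega$ to choose $g\in PGL(\Omega)$ with $g(x_1)=x_2$. The conjugate $h=\tau g\tau^{-1}$ is an isometry of $\Omega'$ with $h(y_1)=y_2$, and the crux is to show that $h$ is projective by exhibiting a focusing point. Take a ray $r$ in $\Omega'$ converging to $v'$; then $\tau^{-1}(r)$ converges to $v$ because $v'$ focuses to $v$ under $\tau^{-1}$, the projective map $g$ fixes $v$ so $g(\tau^{-1}(r))$ again converges to $v$, and finally $\tau$ sends this to a ray converging to $v'$ because $v$ focuses to $v'$ under $\tau$. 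Thus some ray through $v'$ is carried by $h$ to a ray accumulating at the extreme point $v'$, so Lemma 10.2 makes $v'$ a focusing point of $h$, and Theorem 13.2 forces $h\in PGL(\Omega')$. Since $h(y_1)=y_2$ and $y_1,y_2$ were arbitrary, $PGL(\Omega')$ acts transitively and $\Omega'$ is a homogeneous cone.

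The main obstacle is this focusing-point computation for $h$. It depends essentially on the vertex being canonically preserved, which is exactly why the simplex (where the apex is not unique and $PGL$ permutes the vertices, so no single $v$ is fixed by all of $PGL(\Omega)$) must be treated separately, and on the orientation-matching in Lemma 14.1, so that the three maps $\tau^{-1}$, $g$, $\tau$ compose to keep the accumulation point at $v'$ rather than drifting into the base. One should also verify the routine point that $\Omega'$, being a bounded convex domain that is a cone, is an open proper convex cone in the sense of Section 6, so that it qualifies as a homogeneous cone under the stated definition.
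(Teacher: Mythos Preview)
Your overall strategy coincides with the paper's: conjugate $PGL(\Omega)$ by $\tau$ and show each $h=\tau g\tau^{-1}$ is projective by exhibiting a focusing point and invoking Theorem~13.2. The paper compresses this to a single sentence, simply asserting that $\tau p\tau^{-1}$ has a focusing point.

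There is, however, a real gap in your intermediate reasoning. You claim that Lemma~14.1 carries vertex-to-base lines to vertex-to-base lines \emph{with matching orientation}, so that rays to $v$ go to rays to $v'$ and hence $v$ is a focusing point of $\tau$. Lemma~14.1 asserts no such thing: it only says the image line runs between some vertex and some base of $\Omega'$, with no control on which end is which. Worse, your conclusion self-destructs: if $v$ were a focusing point of $\tau$, Theorem~13.2 would already make $\tau$ projective and the lemma trivial. In the genuinely interesting case where $\tau$ is not projective, the proof of Lemma~14.1 establishes exactly the \emph{reversed} orientation: rays to $v$ accumulate in the base $B'$, and rays into $B$ accumulate at the vertex $v'$.

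Your outline is easily repaired. When $\tau$ is not projective, trace a ray $r$ converging to $v'$ through $h$: $\tau^{-1}$ reverses orientation, so $\tau^{-1}(r)$ accumulates at a point of $B$; since $g\in PGL(\Omega)$ fixes the unique vertex $v$ it also preserves the opposite base facet, so $g\tau^{-1}(r)$ still accumulates in $B$; then $\tau$ reverses once more, so $h(r)$ accumulates at the extreme point $v'$. Thus $v'$ is a focusing point of $h$ after all---reached via $B$ rather than via $v$---and Theorem~13.2 finishes as you intended. This double-reversal is precisely the mechanism that later drives Lemma~15.1 and Theorem~15.2. Your separate handling of the simplex is reasonable, since the uniqueness of $v$ used above fails there; alternatively one simply observes that a simplex is already a homogeneous cone.
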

\begin{proof}
$\Omega'$ is a cone by lemma 14.1. If $x$ and $y$ are two points in $\Omega'$ then there exists a projective transformation $p\in PGL(\Omega)$ such that $p\tau^{-1}(x)=\tau^{-1}(y)$. Thus $\tau p\tau^{-1}(x)= y$. The result follows because $\tau p\tau^{-1}$ has a focusing point and is therefore in $PGL(\Omega')$.
\end{proof}

\section{Isometries of General Convex Domains}

In this section we will study the isometries of general convex domains. We will prove $PGL(\Omega)$ is at most an index 2 subgroup of $Isom(\Omega)$ for any convex domain. This extends the results of Lemmens and Walsh on polyhedral domains to arbitrary convex domains. This has also be shown to be true using different methods in ~\cite{Walsh2}.

\begin{lem}
If $\tau:\Omega\rightarrow\Omega$ is an isometry then $\tau^2$ is a projective transformation.
\end{lem}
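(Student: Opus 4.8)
The plan is to prove that $\tau^2$ has a focusing point and then invoke Theorem 13.2. If $\tau$ is already a projective transformation there is nothing to prove, so I would assume $\tau$ is not projective. Since a projective transformation has every extreme point as a focusing point, while Theorem 13.2 says conversely that a single focusing point forces projectivity, a non-projective $\tau$ has no focusing point at all; this is the standing hypothesis I would carry throughout.

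Fix an extreme point $e$ of $\Omega$. By Lemma 12.2 there is a minimal cone $C_e(I)$ with vertex $e$, and by Corollary 14.2 its image is again a minimal cone, $\tau(C_e(I)) = C_{e'}(I')$, with $e'$ an extreme point of $\Omega$ and $\tau$ carrying each line from $e$ to $I$ onto a line from $e'$ to $I'$. The crucial observation is an orientation dichotomy: along such an image line the accumulation point of $\tau(r)$, for a ray $r$ pointing toward the vertex $e$, is one of the two endpoints, either $e'$ or a point of the base $I'$. If it were the extreme point $e'$, then Lemma 10.2 would make $e$ a focusing point of $\tau$, contradicting our assumption. By Corollary 8.10 all vertex-rays of $C_e(I)$ have images accumulating in the relative interior of one common face of $\Omega$, so they behave uniformly; hence for a non-projective $\tau$ every minimal cone is orientation-reversed: rays toward the vertex map to rays toward the base, and correspondingly rays toward the base map to rays toward the vertex.

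Now I would iterate. Take a ray $r$ in $C_e(I)$ with $a(r) = e$; then $a(\tau(r))$ is a point $\beta' \in I'$, so $\tau(r)$ is a ray of the minimal cone $C_{e'}(I')$ pointing toward its base. Applying Corollary 14.2 a second time to $C_{e'}(I')$ gives $\tau(C_{e'}(I')) = C_{e''}(I'')$ with $e''$ extreme, and the same orientation-reversal dichotomy, now applied at the extreme point $e'$, shows that the base-directed ray $\tau(r)$ is sent to a vertex-directed ray, i.e. $a(\tau^2(r)) = e''$, an extreme point of $\Omega$. The two reversals compose to a net preservation, vertex $\to$ base $\to$ vertex. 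Thus $\tau^2$ admits a ray from the extreme point $e$ whose image accumulates at an extreme point, so by Lemma 10.2 the point $e$ is a focusing point of $\tau^2$, and Theorem 13.2 finishes the proof.

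The main obstacle I anticipate is making the orientation dichotomy airtight: one must know that the reversing-versus-preserving behavior of $\tau$ on a minimal cone is a single consistent alternative, so that a vertex-ray and a base-ray of the same cone genuinely swap roles, and that it is forced into the reversing case precisely by the absence of focusing points. This is where Corollary 8.10 (all vertex-rays have images accumulating in the relative interior of one common face) and Lemma 10.2 (a vertex-ray landing on an extreme point would produce a focusing point) do the real work; everything else is bookkeeping with Corollary 14.2.
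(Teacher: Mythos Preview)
Your proposal is correct and follows essentially the same approach as the paper: pick a minimal cone at an extreme point, use Corollary 14.2 to track where vertex-to-base lines go, argue that the orientation is preserved under $\tau^2$ (two reversals, or two preservations), and conclude via Lemma 10.2 and Theorem 13.2. The paper compresses your two applications of Corollary 14.2 into one application to $\tau^2$ and states the orientation parity in a single sentence, whereas you unpack the non-projective case and justify the orientation dichotomy more carefully; the substance is the same.
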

\begin{proof}
Suppose $e$ is an extreme point of $\Omega$ and $C_e(I)$ is a minimal cone in $\Omega$ with vertex $e$. Using the setup in the statement of corollary 14.2 with $\tau^2$ replacing $\tau$ we have $\tau^2(C_e(I))=C_{e'}(I')$ and $\tau$ maps a line from $e$ to $I$ to a line from $e'$ to $I'$ with the same orientation. The orientation is the same because $\tau$ either changes or reverses orientation so applying $\tau$ twice gives the same orientation. Thus $e$ is a focusing point of $\tau^2$ by lemma 10.2 and it follows that $\tau^2$ is a projective transformation.
\end{proof}

We will now prove the main result of this thesis which shows that $PGL(\Omega)$ is at most an index 2 subgroup of $Isom(\Omega)$. This theorem was also established by Walsh in ~\cite{Walsh2}.

\begin{thm}
If $\Omega$ is an open bounded convex set in $\mathbb{R}^n$ then $PGL(\Omega)$ is a normal subgroup of $Isom(\Omega)$ and has index at most 2, that is

\noindent $PGL(\Omega)=Isom(\Omega)$ or
\begin{displaymath}Isom(\Omega)/PGL(\Omega)\cong\mathbb{Z}_2.\end{displaymath}
\end{thm}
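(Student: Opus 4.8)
The plan is to isolate one key lemma — \emph{the product of any two non-projective isometries of $\Omega$ is a projective transformation} — and then obtain normality and the index bound by elementary coset bookkeeping. At the outset I would note that $PGL(\Omega)$ is genuinely a subgroup of $Isom(\Omega)$: projective maps preserve cross ratios and hence are isometries, and projectivity is preserved under composition and inversion. The whole argument rests on the dichotomy furnished by Theorem 13.2, namely that an isometry is projective if and only if it has a focusing point; equivalently, a non-projective isometry has no focusing point whatsoever.

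The key lemma is the main obstacle, and I would prove it by generalizing the argument of Lemma 15.1. Fix an extreme point $e$ of $\Omega$ and, using Lemma 12.2, a minimal cone $C_e(I)$ with vertex $e$; a line $L$ of this cone running from $e$ to a point of $I$ is rigid because $e$ is extreme. By Corollary 14.2 any isometry $\sigma$ sends $C_e(I)$ to a minimal cone $C_{e_1}(I_1)$ and sends $L$ to a line of that cone, so it induces a bijection between the two ends of $L$ (one toward the vertex $e$, one toward the base $I$) and the two ends of its image. This bijection is either the identity or the transposition, and the choice is uniform across all minimal cones: if $\sigma$ preserved the vertex end on even a single minimal cone, then by Lemma 10.2 that vertex would be a focusing point and $\sigma$ would be projective. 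Hence a non-projective $\sigma$ reverses every such cone, sending the vertex end of $L$ to the base end of $\sigma(L)$ and conversely. Now let $\sigma$ and $\rho$ be non-projective and let $r$ be the ray on $L$ converging to $e$. Tracking $r$ through $\rho$ and then $\sigma$, its accumulation point moves from the vertex to the base and then back to a vertex, so $a(\sigma\rho(r))$ is the extreme vertex of the twice-imaged cone. By Lemma 10.2 the point $e$ is then a focusing point of $\sigma\rho$, and Theorem 13.2 forces $\sigma\rho$ to be projective.

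Granting the key lemma I would establish normality directly. Let $\tau\in Isom(\Omega)$ and $p\in PGL(\Omega)$. If $\tau$ is projective then $\tau p\tau^{-1}$ is a product of projective maps and lies in $PGL(\Omega)$. If $\tau$ is non-projective, then $\tau^{-1}$ is non-projective (otherwise $\tau$ would be projective) and $\tau p$ is non-projective (otherwise $\tau=(\tau p)p^{-1}$ would be projective); thus $\tau p\tau^{-1}=(\tau p)\tau^{-1}$ is a product of two non-projective isometries and is projective by the key lemma. In either case $\tau p\tau^{-1}\in PGL(\Omega)$, so $PGL(\Omega)$ is normal.

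Finally I would settle the index. If every isometry is projective then $PGL(\Omega)=Isom(\Omega)$. Otherwise fix a non-projective $\tau_0$. Every projective isometry lies in $PGL(\Omega)$, while for any non-projective $\tau$ the map $\tau_0^{-1}\tau$ is a product of two non-projective isometries, hence projective, so $\tau\in\tau_0\,PGL(\Omega)$. Therefore $Isom(\Omega)=PGL(\Omega)\sqcup\tau_0\,PGL(\Omega)$ has exactly two cosets, giving $Isom(\Omega)/PGL(\Omega)\cong\mathbb{Z}_2$. Note that the weaker fact $\tau^2\in PGL(\Omega)$ from Lemma 15.1 alone only shows the quotient is $2$-torsion; the key lemma is precisely what collapses it to order at most two.
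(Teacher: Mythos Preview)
Your argument is correct and matches the paper's proof almost exactly: both use Corollary~14.2 to track a minimal cone $C_e(I)$ through two non-projective isometries, observe that each reverses the vertex/base orientation (else Lemma~10.2 and Theorem~13.2 would force projectivity), and conclude that the composite preserves orientation and hence has $e$ as a focusing point. The only cosmetic difference is that you isolate the ``product of two non-projective isometries is projective'' statement as a separate lemma and prove normality directly, whereas the paper goes straight to the coset count and relies on the automatic normality of index-$2$ subgroups.
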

\begin{proof}
Suppose $\tau_1$ and $\tau_2$ are isometries of $\Omega$ which are not projective transformations and let $C_e(I)$ be a minimal cone in $\Omega$. By corollary 14.2 we have $\tau_2(C_e(I))=C_{e_2}(I_2)$ is a minimal cone in $\Omega$ and $\tau_2$ maps lines from $e$ to $I$ in $C_e(I)$ to lines from $e_2$ to $I_2$ with the reverse orientation. Again by corollary 14.2 $\tau_1^{-1}(C_{e_2}(I_2))=C_{e_1}(I_1)$ is a minimal cone in $\Omega$ and $\tau_1^{-1}$ maps lines from $e_2$ to $I_2$ in $C_{e_2}(I_2)$ to lines from $e_1$ to $I_1$ with the reverse orientation. Thus $\tau_1^{-1}\tau_2$ maps lines from $e$ to $I$ to lines from $e_1$ to $I_1$ with the same orientation and therefore $e$ is a focusing point of this isometry, see lemma 10.2. It follows that $\tau_2=\tau_1 p$ for some projective transformation $p$ of $\Omega$.

We have shown that $\tau_1 PGL(\Omega)=\tau_2 PGL(\Omega)$ and therefore there are only 2 cosets of $PGL(\Omega)$ in $Isom(\Omega)$.
\end{proof}

\section{Isometries of the Hilbert Metric in $\mathbb{R}^3$}

In this section we will study the isometries of 3-dimensional convex domains. We have seen the 3-simplex and the cone on a disk have isometries which are not projective transformations. We will give the proof of theorem 1.5 which tells us that an isometry between 3-dimensional convex domains which are not cones is a projective transformation.

\begin{thm}
Suppose $\Omega$ and $\Omega'$ are the interiors of compact convex sets in $\mathbb{R}^3\subset\mathbb{R}P^3$ which are isometric. If $\Omega$ is not a cone then $\Omega'$ is not a cone and $Isom(\Omega, \Omega')=PGL(\Omega,\Omega')$.
\end{thm}
\begin{proof}
Since $\Omega$ is not a cone lemma 14.1 implies that $\Omega'$ is not a cone. If $\Omega$ contains an extreme line the $\tau$ is projective. If not then lemma 13.1 implies $\Omega=\Omega'$ is the join of a line and a rectangle. It follows from theorem 5.1 that $\tau$ is a projective transformation.
\end{proof}

The author believes lemma 16.1 holds in all dimensions. More work is needed to establish this lemma in higher dimensions because there are more possibilities for convex domains which do not contain extreme lines in dimensions greater than 3.

\section{Concluding Remarks}

In this section we will give an overview of what is known about the isometries of the Hilbert metric and some questions that are still unanswered. The main result in this thesis was that for a general convex domain $PGL(\Omega)$ is at most an index 2 subgroup of $Isom(\Omega)$. We also gave a large class of domains for which these two groups are equal. This result was also proven independently by Walsh in ~\cite{Walsh2}. We state his result below.

\begin{thm}[Walsh]
Suppose $\Omega$ is a convex domain and $C_\Omega$ is a cone over $\Omega$. If $C_\Omega$ is a non Lorentzian symmetric cone then $PGL(\Omega)$ is an index 2 subgroup of $Isom(\Omega)$. Otherwise $PGL(\Omega)=Isom(\Omega)$.
\end{thm}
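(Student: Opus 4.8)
The plan is to deduce Walsh's dichotomy from the index-at-most-2 result already established as theorem 15.2. Since that theorem guarantees $[Isom(\Omega):PGL(\Omega)]\le 2$, in both cases it suffices to decide whether a non-projective isometry of $\Omega$ exists: the index is $2$ precisely when one does, and $1$ otherwise. By theorem 1.2 an isometry is projective exactly when it possesses a focusing point, so by the minimal-cone analysis of section 14 (corollary 14.2, together with the orientation bookkeeping in the proofs of lemma 15.1 and theorem 15.2) a non-projective isometry is characterized as one that carries every minimal cone to a minimal cone while \emph{reversing} the orientation of each pencil of rigid lines running from a vertex to its opposite base. The whole problem thus reduces to the assertion that such an orientation-reversing isometry exists if and only if the cone $C_\Omega$ over $\Omega$ is a non-Lorentzian symmetric cone.

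For the existence direction I would realize $\Omega$ through its associated proper cone $C_\Omega\subset\mathbb{R}^{n+1}$ as in section 3. When $C_\Omega$ is symmetric it is self-dual, $C_\Omega=C_\Omega^{*}$, and Vinberg's $*$-map of section 6 supplies a map $x\mapsto x^{*}=-\nabla\log\phi(x)$ from $C_\Omega$ to itself which is an involutive isometry of the Hilbert metric (the Jordan-algebra inverse), hence descends to an involutive isometry of $\Omega$. One then checks that $x\mapsto x^{*}$ fails to be projective exactly when $C_\Omega$ is not a Lorentz cone: for the Lorentz cone the inverse is, up to scale, a linear reflection and hence projective, whereas for every higher-rank symmetric cone it blows extreme rays up to positive-dimensional faces, just as $\gamma[x:y:z]=[x^{-1}:y^{-1}:z^{-1}]$ does on the triangle and as the analogous involution does on the cone over a disk (sections 4--6). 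This $*$-involution is a non-projective isometry, so theorem 15.2 forces the index to be exactly $2$.

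For the converse I would start from an arbitrary non-projective $\tau\in Isom(\Omega)$ and show it compels the symmetric structure on $C_\Omega$. Lemma 15.1 gives $\tau^{2}\in PGL(\Omega)$, and theorem 15.2 shows every non-projective isometry differs from $\tau$ by a projective map, so there is an essentially canonical ``flip''. The aim is to prove this flip is, modulo $PGL(\Omega)$, a polarity of $C_\Omega$ with its dual $C_\Omega^{*}$: by corollary 14.2 it interchanges the vertex and base roles of minimal cones and sends the pencil of rigid lines through each extreme point $e$ to the pencil through $f=\tau(e)$ with reversed orientation, which is exactly the incidence pattern of a duality exchanging extreme rays of $C_\Omega$ with supporting hyperplanes, that is, with extreme rays of $C_\Omega^{*}$. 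One then argues that an \emph{isometric} self-map of this type forces $C_\Omega\cong C_\Omega^{*}$ and that the involution it produces is a Jordan inverse, so that $C_\Omega$ is symmetric; it cannot be a Lorentz cone, since the projectivization of a Lorentz cone is an ellipsoid, which is strictly convex and therefore admits only projective isometries by theorem 4.2, contradicting the non-projectivity of $\tau$.

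The main obstacle is this converse: passing from the purely incidence-theoretic data of an orientation-reversing isometry to an actual Jordan-algebra, i.e.\ symmetric-cone, structure on $C_\Omega$. The focusing-point machinery of this thesis detects that a non-projective isometry reverses every cone orientation and permutes faces like a polarity, but it does not by itself manufacture the algebraic self-duality. Closing that gap is precisely where Walsh's methods enter: the parts of the horoboundary together with the detour metric recover the facial lattice of $\Omega$ with enough metric data that an isometry interchanging vertex-parts and facet-parts can be classified, and the only domains admitting such an isometry turn out to be the non-Lorentzian symmetric cones. I would therefore expect to import that horofunction apparatus, or an equivalent classification of convex domains carrying a duality isometry, to finish this half of the argument.
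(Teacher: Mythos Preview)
The paper does not prove this statement. Theorem 17.1 appears in the Concluding Remarks purely as a citation of Walsh's result in~\cite{Walsh2}; the paper remarks only that ``To prove this theorem Walsh used the detour metric and generalized other ideas in~\cite{Polyhedral} to general convex domains,'' and gives no argument of its own. So there is no in-paper proof to compare against.

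Your proposal is an accurate diagnosis of exactly this situation. You correctly isolate what the thesis machinery delivers: Theorem 15.2 gives $[Isom(\Omega):PGL(\Omega)]\le 2$; for the existence direction when $C_\Omega$ is a non-Lorentzian symmetric cone, Bosche's Theorem 6.1 together with the identification $(\Omega,d_\Omega)\cong(C_\Omega,d_{C_\Omega})$ from the end of section 3 supplies Vinberg's $*$-map as the non-projective isometry; and the Lorentzian case is ruled out via strict convexity and Theorem 4.2 just as you say. You are also right that the converse, namely that a non-projective isometry forces $C_\Omega$ to be symmetric, is not accessible with the focusing-point and minimal-cone tools developed here. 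Your sketch of how the orientation-reversal of Corollary 14.2 suggests a polarity $C_\Omega\leftrightarrow C_\Omega^{*}$ is plausible heuristics, but as you concede, turning that incidence picture into a genuine Jordan-algebra structure is the entire content of Walsh's argument and requires the detour metric on the horoboundary.

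In short, your proposal is not a proof but a correct account of the boundary between what this thesis proves and what it cites; that is the honest position, and it matches the paper's own stance.
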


To prove this theorem Walsh used the detour metric and generalized other ideas in ~\cite{Polyhedral} to general convex domains. This theorem answers several questions posed by de la Harpe in ~\cite{Harpe} and shows that $Isom(\Omega)$ is always a Lie group.

In this thesis we have shown that in dimension 2 that two convex domains are isometric if and only if they are projectively equivalent. Furthermore in any dimension a convex domain with an extreme line can only be isometric to a convex domain containing an extreme line and strictly convex domains are only isometric to strictly convex domains. This leads us to make the following conjecture.

\begin{conj}
If $(\Omega,d_\Omega)$ and $(\Omega',d_\Omega')$ are Hilbert geometries then they are isometric if and only if they are projectively equivalent.
\end{conj}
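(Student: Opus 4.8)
The forward direction is immediate and requires no new work: a projective transformation $p$ with $p(\Omega)=\Omega'$ preserves cross ratios of collinear points, hence preserves the Hilbert distance, so any projective equivalence is in particular an isometry. The entire content lies in the converse, and the plan is to reduce it to the single phenomenon of orientation reversal at minimal cones, which is the mechanism already isolated in the proofs of Lemma 15.1 and Theorem 15.2.

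So suppose $\tau:\Omega\to\Omega'$ is an isometry; I want to produce \emph{some} projective transformation carrying $\Omega$ onto $\Omega'$, not necessarily $\tau$ itself. If $\tau$ has a focusing point then Theorem 13.2 already says $\tau$ is projective and we are done, so the only case to handle is that $\tau$ has no focusing point. First I would establish the global dichotomy that underlies Theorem 15.2: for every minimal cone $C_e(I)$ of $\Omega$ one has $\tau(C_e(I))=C_{e'}(I')$ by Corollary 14.2, and $\tau$ either preserves or reverses the orientation of the lines from $e$ to $I$; moreover this choice is the same for every minimal cone, since preservation at even a single one yields, via Lemma 10.2, a focusing point and hence (Theorem 13.2) a projective map preserving orientation everywhere. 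Call $\tau$ \emph{direct} when it preserves these orientations and \emph{reversed} otherwise; then $\tau$ is direct if and only if it is projective, and composition obeys the evident $\mathbb{Z}_2$ rule (direct $\circ$ reversed $=$ reversed, reversed $\circ$ reversed $=$ direct), exactly as in the proof of Theorem 15.2.

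With this language the conjecture becomes: whenever a reversed isometry $\Omega\to\Omega'$ exists, $\Omega$ and $\Omega'$ are projectively equivalent. The plan here is to show that the existence of a reversed isometry forces $\Omega'$ to carry a reversed \emph{self}-isometry $\sigma:\Omega'\to\Omega'$; granting such a $\sigma$, the composite $\sigma\circ\tau:\Omega\to\Omega'$ is reversed $\circ$ reversed, hence direct, hence projective by Theorem 13.2, producing the desired projective equivalence. In the known cases $\sigma$ is concrete — the quadratic transformation for the triangle and, more generally, the isometry induced by Vinberg's $*$-map when a cone over $\Omega'$ is a non-Lorentzian symmetric cone — and the structural input needed is precisely that these are the only domains admitting reversed isometries at all. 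I would attempt to recover this input inductively through the minimal-cone machinery of Section 14: a reversed isometry restricts on each minimal cone to a reversed isometry of lower dimension, so by induction the bases of these cones are controlled, and one tries to assemble a self-symmetry of $\Omega'$ from the accumulated reversal data.

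The hard part will be exactly this assembly step, namely proving in arbitrary dimension that a domain admitting a reversed isometry must admit a reversed self-isometry (equivalently, that it is of the special symmetric-cone type). The thesis carries this out only where the list of obstructing domains is short enough to handle by hand: dimension $2$, where the sole domain without an extreme line is the triangle and all triangles are projectively equivalent (Theorem 12.1), and dimension $3$ for non-cones (Theorem 16.1). In higher dimensions the obstacle is that the class of domains without extreme lines — the potential carriers of reversed isometries — is no longer classifiable by the elementary case analysis of Lemma 13.1, and a genuinely new classification, essentially the content of Walsh's theorem on non-Lorentzian symmetric cones, appears to be required to close the argument.
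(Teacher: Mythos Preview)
The statement you are attempting is Conjecture~17.2 in the paper; it is explicitly left open, so there is no paper proof to compare against. The paper establishes only the two-dimensional case (Theorem~11.1(3)) and the three-dimensional non-cone case (Theorem~16.1), and the concluding remarks present the general statement as a conjecture.

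Your reduction is sound as far as it goes. The forward direction is indeed immediate, and for the converse the direct/reversed dichotomy is well-defined globally by exactly the mechanism you describe: preservation of orientation at a single minimal cone gives a focusing point via Lemma~10.2, hence projectivity everywhere by Theorem~13.2. The composition rule (reversed $\circ$ reversed $=$ direct) follows from Corollary~14.2 and Lemma~10.2 in the same way as in the proof of Theorem~15.2, so your claim that a reversed self-isometry $\sigma$ of $\Omega'$ would yield a projective $\sigma\circ\tau:\Omega\to\Omega'$ is correct.

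The gap you identify is genuine and is not a technicality: it is the entire remaining content of the conjecture. You need that the existence of a reversed isometry $\Omega\to\Omega'$ forces $\Omega'$ to admit a reversed \emph{self}-isometry, and nothing in the paper's machinery produces this. The inductive scheme you sketch through minimal cones does not obviously assemble local reversal data into a global self-map of $\Omega'$. Moreover, even Walsh's Theorem~17.1 as quoted does not close the gap: that result characterizes when $Isom(\Omega')\neq PGL(\Omega')$, i.e.\ when $\Omega'$ has a reversed self-isometry, in terms of the cone over $\Omega'$ being non-Lorentzian symmetric --- but it says nothing about whether a reversed isometry \emph{between distinct domains} forces the target into that class. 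So your proposal is a correct reformulation of what remains to be proved, not a proof.
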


\bibliographystyle{plain}
\bibliography{ThesisArxiv}


\end{document}